\title{Endnotes using pagenote package at the end of each chapter}
\renewcommand*{\pagenotesubhead}[1]{}
\tikzset{mynode/.style={circle, thick,draw},}
\tikzstyle{every picture}=[->,>=latex]
\newtheorem{df}{Definition}[section]
\newtheorem{theorem}[df]{Theorem}
\newtheorem{lemma}[df]{Lemma}
\newtheorem{proposition}[df]{Proposition}
\title{Algorithmic Applications of Tyshkevich's Graph Decomposition: \\ A Primer and a Toolkit}
\author{Christine T. Cheng and Chelsea Ann Lambert}%\\Department of Electrical Engineering and Computer Science \\ University of Wisconsin-Milwaukee}
\date{Department of Electrical Engineering and Computer Science \\ University of Wisconsin-Milwaukee\\ {\tt ccheng@uwm.edu; lambe222@uwm.edu}%
       }
\begin{document}
\maketitle

\begin{abstract}
A graph that is completely determined by its degree sequence is called a {\it unigraph}.  In 2000, Regina Tyshkevich published one of the most important papers on unigraphs \cite{Ty00}.  There are two parts to the paper: a decomposition theorem that describes how every graph can be broken into a sequence of basic graphs and  a complete classification of all  basic unigraphs. Together, they reveal how every unigraph is constructed. 

We provide an informal overview of Tyshkevich's  results and show how they enable the computation of  various graph parameters of unigraphs in linear time.  We also provide a toolkit (\url{https://chelseal11.github.io/tyshkevich_decomposition_toolkit/}) that implements  the algorithms described in this write-up.

% It may seem that unigraphs are special so there are not many of them.  But, in fact, there are between $2.3^{n-2}$ and $2.6^n$ (unlabeled) unigraphs with $n$ vertices.   One of the most important papers on unigraphs was published by Regina Tyshkevich in 2000 \cite{Ty00}.  There are two parts to Tyshkevich's paper:   a decomposition theorem that describes how every graph can be broken down into a sequence of basic graphs and  a complete classification of all the basic unigraphs. Together, they show how every unigraph is generated. 

%We provide an informal overview of Tyshkevich's  results and show how they can be used to design algorithms for computing various graph parameters of unigraphs in linear time.  We also provide a toolkit [URL?] that implements (most of ?) the algorithms described in this write-up.  

 % In particular, we  prove that the clique number, independence number, distinguishing number and fixing number of unigraphs can be computed in linear time.  We also provide a tool [URL?] that implements most of the algorithms described in this write-up.  
\end{abstract}

\section{Introduction}

	Every graph $G$ has a {\it degree sequence} $D_G$ that lists  the degrees of the vertices from largest to smallest.  For example,  the degree sequence of the tree $T$ in Figure \ref{fig1} is $(3,2,1,1,1) = (3,2,1^3)$.   
In general,  a graph's degree sequence is not unique to it;  two or more graphs can share the same degree sequence.  
%Two or more graphs can share the same degree sequence. 
Take the $8$-cycle, $C_8$.  It has the same degree sequence as $C_5 \cup C_3$ and $2C_4$,  which is $(2,2,2,2,2, 2, 2, 2) = (2^8)$.    

A graph that is completely determined by its degree sequence is called a {\it unigraph}.   When $G$ is a unigraph and $D_{G} = D_{G'}$, then $G$ and $G'$ are isomorphic.  Some examples of unigraphs are complete graphs, $mK_2$ ($m$ copies of $K_2$), $C_5$ and their complements.  It may seem that unigraphs are special so there are not many of them.  But, in fact, there are between $2.3^{n-2}$ and $2.6^n$ (unlabeled) unigraphs with $n$ vertices \cite{Ty00}!   %They are a non-trivial bunch!
	
%\bigskip

One of the most important papers on unigraphs is {\it ``Decomposition of Graphical Sequences and Unigraphs"} by Regina Tyshkevich, published in the journal {\it Discrete Mathematics} in 2000 \cite{Ty00}.  Tyshkevich had been working on unigraphs with several co-authors since the late 1970's (see references 15 to 23 in \cite{Ty00}).  The papers were in Russian  and remained relatively unknown in the West.  The publication of the decomposition paper finally brought Tyshkevich's work to a wider audience.

\begin{figure}[t]
\begin{center}
\begin{tikzpicture}
%\scalebox{0.8}
{
\begin{scope}[every node/.style={circle,thick,draw}]

  %  \node(r) at (0,0) {r};
    \node(a) at (0,0.5) {$e$};
  %  \node(b) at (0,-1) {b};
   % \node(c) at (2,-1) {c};
    \node(b) at (0, 1.5) {$c$};
    \node(c) at (0, 2.5) {$a$};
    %\node(f) at (1, -2) {f};
    %\node(g) at (2,-2) {g};
    %\node(h) at (3, -2) {h};
    \node(d) at (1.5, 1) {$d$};
    \node(e) at (1.5, 2.5) {$b$};
\end{scope}

%\path[-, draw, thick] (r) -- (a);
%\path[-, draw, thick] (r) -- (b);
%\path[-, draw, thick] (r) -- (c);
\path[-, draw, thick] (a) -- (d);
\path[-, draw, thick] (b) -- (d);
\path[-, draw, thick] (c) -- (e);
\path[-, draw, thick] (d) -- (e);
%\path[-, draw, thick] (c) -- (f);
%\path[-, draw, thick] (c) -- (g);
%\path[-, draw, thick] (c) -- (h);
}
%\node(T) at (0.8, -0.5) {$T$};
\node(A) at (1.5, -0.5) {$A$};
\node(B) at (0, -0.5) {$B$};

\end{tikzpicture}
\hspace*{2em}
\begin{tikzpicture}
%\scalebox{0.8}
{
\begin{scope}[every node/.style={circle,thick,draw}]

    \node(e) at (0,0.5) {$e$};
    \node(c) at (0, 1.5) {$c$};
    \node(a) at (0, 2.5) {$a$};
    \node(x) at (3.5, 2.5) {$x$};
    \node(y) at (3.5,1.5) {$y$};
    \node(z) at (3.5, 0.5) {$z$};
    \node(d) at (1.5, 1) {$d$};
    \node(b) at (1.5, 2.5) {$b$};
\end{scope}
% \node(A) at (1.5, -1) {$A$};
 %\node(B) at (0, -1) {$B$);
%\path[-, draw, thick] (r) -- (a);
%\path[-, draw, thick] (r) -- (b);
%\path[-, draw, thick] (r) -- (c);
\path[-, draw, thick] (a) -- (b);
\path[-, draw, thick] (c) -- (d);
\path[-, draw, thick] (e) -- (d);
\path[-, draw, thick] (b) -- (d);
\path[-, draw, thick] (y) -- (z);
\path[-, draw, thick] (x) -- (y);
\path[-, draw, thick] (b) -- (x);
\path[-, draw, thick] (b) -- (y);
\path[-, draw, thick] (b) -- (z);
\path[-, draw, thick] (d) -- (x);
\path[-, draw, thick] (d) -- (y);
\path[-, draw, thick] (d) -- (z);
\draw[-][-, thick] (x) to[out=330, in=30] (z);
\node(A) at (1.5, -0.5) {$(T, A,B)\circ H$};
%\node(B) at (0, -0.5) {$B$};
%\node(H) at (3.5, -0.5) {$H$};
}
\end{tikzpicture}
\end{center}
\caption{The tree $T$ on the left has degree sequence $(3,2,1,1,1) = (3, 2, 1^3)$.  It has one $KS$-partition $(\{b,d\},  \{a,c,e\})$.   
It is combined with the another graph $H$, a $3$-cycle, using the composition operation to create the graph $(T, A, B) \circ H$.} 
%with $A = \{b,d\}$ and $B = \{a,c,e\}$. }
\label{fig1}
\end{figure}
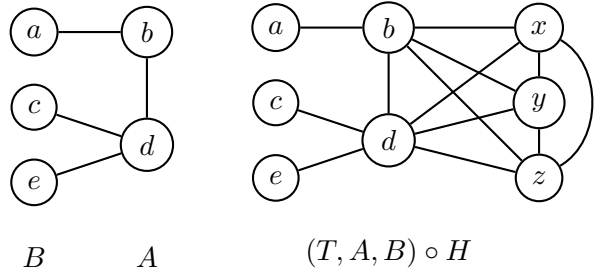
	
%\bigskip

There are two parts to Tyshkevich's paper:  (i) a decomposition theorem that describes how every graph can be broken  into a sequence of basic components and   (ii) a complete classification of all the basic unigraph components. Together, they reveal how every unigraph is generated.   
The goal of this write-up is to provide an informal overview of Tyshkevich's  results and show how they can be used to compute various graph parameters.  

The algorithms are a variation of the following idea:  to compute the parameter $f(G)$,  determine $f(G_i)$ for each basic component $G_i$ and then combine them to obtain $f(G)$.   In Section \ref{sec4},  we present a result by Whitman \cite{Whitman24} and extend it to show that when $G$ is a unigraph,  the clique number $\omega(G)$, the independence number $\alpha(G)$, the vertex cover number $\beta(G)$ and the chromatic number $\chi(G)$ can all be computed in linear time.  In Section \ref{sec5},  we describe how a modified version of Tyshkevich's decomposition theorem can be used to characterize the automorphism group of a graph. Consequently, when $G$ is a unigraph, the distinguishing number and fixing number of $G$, two parameters associated with symmetry-breaking of graphs,  can also be computed in linear time.  We accompany our exposition with a toolkit to make the material more accessible to the reader.  The toolkit is described  in Section \ref{sec6}.

%size of a largest clique, the size of a largest independence set, the size of a minimum vertex cover and the chromatic number of $G$ can all be computed in linear time.  In Section \ref{sec5},  we describe how a modified version of Tyshkevich's decomposition theorem can be used to characterize the automorphism group of a graph.  We then show that the fixing number, a symmetry-breaking parameter, of a unigraph can also be computed in linear time.

%clique number, chromatic number,  independence number, distinguishing number and fixing number of unigraphs can all be computed in linear time. In Section \ref{sec5},  we show how a modified version of Tyshkevich's decomposition theorem can be used to characterize the automorphism group of a graph. We then show that the fixing number, a symmetry-breaking parameter, of a unigraph can also be computed in linear time.

We note that other researchers have also addressed algorithmic problems on unigraphs.  Calamoneri and Petreschi \cite{CaPe11} used the above template to design an approximation algorithm that computes an $L(2,1)$-labeling of a unigraph.  The number of colors used  is at most $1.5 \times OPT$.   Nakahata \cite{Na22} utilized the same idea to prove that the cliquewidth of a unigraph is at most $4$.   What makes the algorithms  in Sections \ref{sec4} and \ref{sec5} noteworthy is that they output the exact answers in linear time.

%Finally, we accompany our exposition with a toolkit to make the material more accessible to the reader.  We describe its capabilities in Section \ref{sec6}.

 %algorithmic problems.  In particular, we  prove that the clique number, independence number, distinguishing number and fixing number of unigraphs can be computed in linear time.   %We shall define terms, state theorems and provide many examples but we'll prove a few results only and  point readers to papers instead.  
% Additionally, we created a tool  to make the material more accessible to the reader.  
 
\section{The Decomposition Theorem}
\label{sec2}

	{\it Split graphs} play a prominent role in Tyshkevich's decomposition theorem.  A graph $G$ is {\it split} if its vertex set can be partitioned into two sets so that the first set of vertices induce a clique while the second set of vertices induce a stable or independent set.  The two sets form what's called a {\it $KS$-partition} of $G$. The $K$-part is for the clique and the $S$-part is for the stable set.   
	
	Consider the tree in Figure \ref{fig1}.  It has a $KS$-partition $(\{b,d\}, \{a, c, e\})$, so it is a split graph.  In fact, it is the tree's only $KS$-partition.  On the other hand, the complete graph $K_n$  has two types of $KS$-partitions:  the first one  has the entire vertex set in the $K$-part and an empty set for the $S$-part; the second one has all but one vertex in the $K$-part and a single vertex in the $S$-part.  Hammer and Simeone \cite{HaSi81} describes the $KS$-partitions of a split graph. 
	
\begin{theorem}
{\bf \cite{HaSi81}} For any $KS$-partition $(A,B)$ of split graph $G$, exactly one of the following holds:
\begin{itemize}
\item $|A| = \omega(G)$ and $|B| = \alpha(G)$
\item $|A| = \omega(G)$ and $|B| = \alpha(G)-1$ (called $K$-max)
\item $|A| = \omega(G) -1$ and $|B| = \alpha(G)$ (called $S$-max)

\end{itemize}
%\smallskip
where $\omega(G)$ and $\alpha(G)$ are the clique and independence numbers of $G$ respectively.  Moreover, in a $K$-max partition, there  is a vertex $u \in A$ so that $B \cup \{u\}$ induces a stable set while in an $S$-max partition, there is a vertex $v \in B$ so that $A \cup \{v\}$ induces a complete graph.  \end{theorem}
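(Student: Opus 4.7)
The plan is to derive easy upper and lower bounds on $|A|$ and $|B|$ using the defining properties of the $KS$-partition, then rule out one of the four a~priori possibilities by a short combinatorial argument.

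First I would establish the upper bounds: since $A$ induces a clique, $|A|\le \omega(G)$, and since $B$ induces a stable set, $|B|\le \alpha(G)$. For the lower bounds, I would use the crucial observation that any clique of $G$ contains at most one vertex of $B$ (because $B$ is independent) and symmetrically any independent set of $G$ contains at most one vertex of $A$. Applied to a maximum clique $K$, this gives $\omega(G)=|K|\le |A|+1$, so $|A|\ge \omega(G)-1$; applied to a maximum independent set $I$, it gives $|B|\ge \alpha(G)-1$. These four inequalities leave exactly four candidate pairs of sizes, namely the three listed in the theorem together with the extra case $|A|=\omega(G)-1$ and $|B|=\alpha(G)-1$.

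Next I would read the structural claims directly out of the lower-bound argument. In the $K$-max case $|B|=\alpha(G)-1$, a maximum independent set $I$ must meet $A$ in exactly one vertex $u$ (it cannot lie entirely in $B$ because $|I|>|B|$), and the remaining $|B|$ vertices of $I$ must then be all of $B$; hence $B\cup\{u\}$ is stable. The $S$-max case is symmetric: a maximum clique $K$ meets $B$ in a unique vertex $v$, and $A\cup\{v\}$ is a clique.

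The main obstacle, and the only part requiring real work, is ruling out the fourth candidate $|A|=\omega(G)-1$, $|B|=\alpha(G)-1$. I would handle it by contradiction: in this scenario the arguments above produce both a vertex $v\in B$ with $A\cup\{v\}$ a clique and a vertex $u\in A$ with $B\cup\{u\}$ independent. Then $u$ and $v$ are simultaneously adjacent (because $u,v$ both lie in the clique $A\cup\{v\}$, as $u\in A$) and non-adjacent (because $u,v$ both lie in the stable set $B\cup\{u\}$, as $v\in B$), a contradiction. This closes the case analysis, and since the three surviving size profiles are pairwise distinct, the ``exactly one'' assertion follows.
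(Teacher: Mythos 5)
Your proposal is correct and complete: the four inequalities $\omega(G)-1\le |A|\le \omega(G)$ and $\alpha(G)-1\le |B|\le \alpha(G)$, the extraction of the swing vertices $u$ and $v$ from a maximum independent set and a maximum clique, and the adjacency/non-adjacency contradiction that kills the fourth size profile together constitute the standard proof of this result. The paper itself gives no proof (it quotes the statement from Hammer and Simeone \cite{HaSi81}), so there is nothing to compare against; your argument is exactly what one would expect to find in the cited source.
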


Split graphs that have the first kind of $KS$-partition are  called {\it balanced}; otherwise, they are {\it unbalanced}.  It turns out that a balanced split graph has only one $KS$-partition \cite{ChCoTr16},  but an unbalanced split graph has both a $K$-max and a $S$-max partition because of the existence of the vertices $u$ and $v$, which are called {\it swing vertices}.  Thus, the tree in Figure \ref{fig1} is a balanced split graph while $K_n$ is an unbalanced one.

%When a split graph has the first kind of $KS$-partition, it is {\it balanced}; otherwise, it is {\it unbalanced}.  We note below that a balanced split graph has a unique $KS$-partition.   On the other hand, an unbalanced split graph has both a $K$-max and a $S$-max partition because of the existence of the vertices $u$ and $v$. The latter are referred to as {\it swing vertices} of the graph.  

\begin{proposition}
\label{uniqueKSprop}
{\bf \cite{ChCoTr16}} A balanced split graph $G$ has a unique $KS$-partition.  
\end{proposition}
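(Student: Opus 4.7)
The plan is to assume two $KS$-partitions $(A,B)$ and $(A',B')$ of a balanced split graph $G$, and force them to coincide. Let $\omega = \omega(G)$ and $\alpha = \alpha(G)$. First, I would observe that since $(A,B)$ is balanced, $|A|+|B| = \omega+\alpha = n$. Combining this with Hammer–Simeone, any other $KS$-partition $(A',B')$ must also satisfy $|A'|+|B'| = n$, which rules out the $K$-max sizes $(\omega,\alpha-1)$ and the $S$-max sizes $(\omega-1,\alpha)$. Hence $(A',B')$ must itself be balanced, with $|A'| = \omega$ and $|B'| = \alpha$.

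Next I would study the symmetric difference. Let $S = A \setminus A' = A \cap B'$ and $T = A' \setminus A = A' \cap B$. Because $S \subseteq A$, it is a clique; because $S \subseteq B'$, it is a stable set; hence $|S| \le 1$, and similarly $|T| \le 1$. Since $|A| = |A'| = \omega$, we also have $|S| = |T|$. The only remaining case to rule out is $|S| = |T| = 1$; say $S = \{s\}$ and $T = \{t\}$. If this case is impossible, then $A = A'$ and $B = B'$, which gives the claim.

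The main obstacle — and the real content of the proof — is handling this case. I would exploit the balanced-partition property that no $K$-vertex is a swing vertex (every $a \in A$ has a neighbor in $B$) and no $S$-vertex is a swing vertex (every $b \in B$ has a non-neighbor in $A$); otherwise one could enlarge $\alpha(G)$ or $\omega(G)$. Applying this to $s$: since $s \in A$, it must have a neighbor in $B = \{t\} \cup (B \cap B')$, but $s \in B'$ and $B'$ is stable, so $s$ is non-adjacent to every vertex of $B \cap B'$. This forces $s \sim t$. Applying it symmetrically to $t$: since $t \in B$, it must have a non-neighbor in $A = \{s\} \cup (A \cap A')$, but $t \in A'$ and $A'$ is a clique, so $t$ is adjacent to every vertex of $A \cap A'$. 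This forces $s \not\sim t$. The contradiction closes the case and completes the proof.
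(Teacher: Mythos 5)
The paper does not actually prove this proposition; it is imported as a black box from \cite{ChCoTr16}, so there is no in-paper argument to compare against. Your proof is correct and complete on its own terms: the size argument via Hammer--Simeone correctly forces any second $KS$-partition $(A',B')$ to satisfy $|A'|=\omega$ and $|B'|=\alpha$; the observation that $A\cap B'$ and $A'\cap B$ are simultaneously cliques and stable sets correctly bounds the symmetric difference; and the final contradiction ($s\sim t$ from the fact that $s\in A$ must have a neighbor in $B$ but is non-adjacent to all of $B\cap B'$, versus $s\not\sim t$ from the fact that $t\in B$ must have a non-neighbor in $A$ but dominates $A\cap A'$) is exactly the right way to kill the $|S|=|T|=1$ case. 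The key lemma you use---that in a balanced partition no vertex of $A$ is isolated from $B$ and no vertex of $B$ dominates $A$, since either would contradict $|B|=\alpha(G)$ or $|A|=\omega(G)$---is correctly justified. This is a clean, self-contained proof of the cited result.
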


	%has a nice characterization of split graphs using a graph's degree sequence.
	%characterization on the types of $KS$-partitions a split graph can have.  
	
%\begin{theorem}
%\cite{} Suppse the degree sequence of $G$ is $(d_1, d_2, \hdots, d_n)$ with $n \ge 2$ and $h = \max \{i: d_i \ge i-1 \}$.  Then $G$ is a split graph if and only if $$ \sum_{i=1}^h d_i = h(h-1) + \sum_{i=h+1}^n d_i.$$
%If equality holds, a $KS$-partition of the graph has the degree sequence $(d_1,  \hdots, d_h; d_{h+1} \hdots, d_n)$ 
%\end{theorem}
	
	For a split graph $G$, we often want to specify the $KS$-partition of interest,  so we express the graph as $(G,A, B)$ where $A$ is the $K$-part and $B$ is the $S$-part of the $KS$-partition.  The degree sequence of $(G,A,B)$ is also expressed as a {\it paired degree sequence} $(d_1, \hdots, d_r; d_{r+1}, \hdots, d_n)$ to distinguish between the degrees of the vertices in $A$ and $B$. 
	
% the first group consists of the degrees of the vertices in $A$ while those in the second group are the degrees of  the vertices in $B$.  Notice that when $u \in A$ and $w \in B$, $deg(u) \ge deg(w)$.  That's because every vertex in $A$ has degree at least $|A| -1$ while every vertex in $B$ has degree at most $|A|$.  But if some $w \in B$ has $deg(w) = |A|$, it is adjacent to all the vertices in $A$ so every $u \in A$ also has $deg(u) \ge |A|$.  Thus,  it is still the case that $d_1 \ge  \hdots \ge d_r \ge d_{r+1} \ge \hdots \ge d_n$  in the paired degree sequence of $(G, A, B)$.

%In fact, the tree has no other $KS$-partition.  On the other hand, a complete graph has two types of KS-partitions:  the first type  has the entire vertex set in the $K$-part and an empty set for the $S$-part; the other type  has all but one vertex in the $K$-part and a single vertex in the $S$-part.  According to Hammer and Simeone \cite{}, the two examples capture what is true about split graphs -- they either have one $KS$-partition or two types of $KS$-partitions.  The former are referred to as {\it balanced split graphs} while the latter are called {\it unbalanced split graphs}.
	
\medskip	
\bigskip % Optional: Adds a bit of extra vertical space before the line
\centerline{$*  *  *  * *$}
\bigskip % Optional: Adds a bit of extra vertical space after the line
\medskip

%\smallskip
%\begin{center}
%{*****}
%\end{center}
%\smallskip

	Let $(G,A,B)$ be a split graph.  Let $H$ be another graph.  Tyshkevich defined the {\it composition of $(G,A,B)$ and $H$} as the graph $(G,A,B) \circ H$. Its vertex set is $V(G) \cup V(H)$ and its edge set is $E(G) \cup E(H) \cup \{uv|u \in A, v \in V(H)\}$.   That is,  $(G,A,B) \circ H$  is the graph $G \cup H$ together with the edges between the vertices in $A$ and the vertices in $H$.  Figure \ref{fig1} has an example.  
	
%\bigskip

%\begin{figure}[h]
%\bigskip
%\begin{center}
%\begin{tikzpicture}
%\scalebox{0.8}
%{
%\begin{scope}[every node/.style={circle,thick,draw}]

%    \node(e) at (0,0.5) {$e$};
%    \node(c) at (0, 1.5) {$c$};
%    \node(a) at (0, 2.5) {$a$};
 %   \node(x) at (3.5, 2.5) {$x$};
 %   \node(y) at (3.5,1.5) {$y$};
 %   \node(z) at (3.5, 0.5) {$z$};
 %   \node(d) at (1.5, 1) {$d$};
  %  \node(b) at (1.5, 2.5) {$b$};
%\end{scope}
% \node(A) at (1.5, -1) {$A$};
 %\node(B) at (0, -1) {$B$);
%\path[-, draw, thick] (r) -- (a);
%\path[-, draw, thick] (r) -- (b);
%\path[-, draw, thick] (r) -- (c);
%\path[-, draw, thick] (a) -- (b);
%\path[-, draw, thick] (c) -- (d);
%\path[-, draw, thick] (e) -- (d);
%\path[-, draw, thick] (b) -- (d);
%\path[-, draw, thick] (y) -- (z);
%\path[-, draw, thick] (x) -- (y);
%\path[-, draw, thick] (b) -- (x);
%\path[-, draw, thick] (b) -- (y);
%\path[-, draw, thick] (b) -- (z);
%\path[-, draw, thick] (d) -- (x);
%\path[-, draw, thick] (d) -- (y);
%\path[-, draw, thick] (d) -- (z);
%\draw[-][-, thick] (x) to[out=330, in=30] (z);
%\node(A) at (1.5, -0.5) {$A$};
%\node(B) at (0, -0.5) {$B$};
%\node(H) at (3.5, -0.5) {$H$};
%}
%\end{tikzpicture}
%\end{center}
%\caption{The graph $(T, A, B) \circ H$ with $A = \{b,d\}$ and $B = \{a,c,e\}$.  }
%composition of $(T,\{b,d\},\{a,c,e\})$ and $H$, the path formed by $x, y, z$.}
%\label{fig2}
%\end{figure}

%\bigskip

\begin{figure}[t]
\bigskip
\begin{center}
%\smallskip
\begin{tikzpicture}
%\scalebox{0.8}
{
\begin{scope}[every node/.style={circle,thick,draw}]

    \node(v3) at (0,2.5) {$v_3$};
    \node(v4) at (0, 0.5) {$v_4$};
    \node(v1) at (1.5, 2.5) {$v_1$};
    \node(v2) at (1.5, 0.5) {$v_2$};
    \node(v0) at (2.5,1.5) {$v_0$};
 \end{scope}
% \node(A) at (1.5, -1) {$A$};
 %\node(B) at (0, -1) {$B$);
%\path[-, draw, thick] (r) -- (a);
%\path[-, draw, thick] (r) -- (b);
%\path[-, draw, thick] (r) -- (c);
\path[-, draw, thick] (v3) -- (v4);
\path[-, draw, thick] (v3) -- (v1);
\path[-, draw, thick] (v3) -- (v2);
\path[-, draw, thick] (v3) -- (v0);
\path[-, draw, thick] (v4) -- (v1);
\path[-, draw, thick] (v4) -- (v2);
\path[-, draw, thick] (v4) -- (v0);
}
\end{tikzpicture}

\end{center}
\caption{The threshold graph $(\{v_4\}, \{v_4\}, \emptyset) \circ (\{v_3\}, \{v_3\}, \emptyset) \circ  (\{v_2\}, \emptyset, \{v_2\}) \circ  (\{v_1\}, \emptyset, \{v_1\})  \circ \{v_0 \}$.}
\label{fig3}
\end{figure}
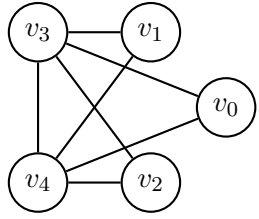
	
	The composition operation can be applied to two split graphs $(G', A', B') \circ (G, A, B)$ resulting in another split graph $(G'', A \cup A', B \cup B')$.  It can also be applied 
multiple  times to produce new graphs; e.g.,  $(G_k , A_k, B_k) \circ (G_{k-1}, A_{k-1}, B_{k-1})  \circ \hdots \circ (G_1, A_1, B_1) \circ G_0$.    Interestingly, the composition operation can be applied in {\it any} order because the operation is associative.

%The composition of two split graphs $(G', A', B') \circ (G, A, B)$ results in another split graph $(G'', A \cup A', B \cup B')$

%where $G_1, G_2, \hdots, G_k$ are split graphs.   
One important class of graphs that can be described this way are {\it threshold graphs}.  They are the graphs formed when each $G_i$, including $G_0$, is a single-vertex graph.   For simplicity, we denote a single-vertex graph as $\{v\}$.  When it is treated as a split graph and the vertex is in the $K$-part, we refer to it as $(\{v\}, \{v\}, \emptyset)$; its paired degree sequence is $(0; \emptyset)$.  On the other hand, 
 when the vertex is in the $S$-part, we refer to it as $(\{v\}, \emptyset, \{v\})$ and its degree sequence is $(\emptyset, 0)$.
 % and it has type $S_1$.  
%and the single vertex can be in the $A_i$ or in $B_i$.  We shall denote the single-vertex graph in $A_i$ as $(\{v_i\}, \{v_i\}, \emptyset)$ and the single-vertex graph in $B_i$ as $(\{v_i\}, \emptyset, \{v_i\})$ and refer to them as having type $K_1$ and $S_1$  respectively.   
 For example, 
$$(\{v_4\}, \{v_4\}, \emptyset) \circ (\{v_3\}, \{v_3\}, \emptyset) \circ  (\{v_2\}, \emptyset, \{v_2\}) \circ  (\{v_1\}, \emptyset, \{v_1\})  \circ \{v_0 \} $$
results in the graph shown in Figure \ref{fig3}.    %Notice that singletons that appear consecutively in the composition and have the same type behave identically.  The vertices $v_1$ and $v_2$ have the same neighbors and so do $v_3$ and $v_4$.  Furthermore, $v_0$ mimics the type of $v_1$ so just like $v_1$, $v_0$ is also adjacent to $v_3$ and $v_4$.  These observations will be useful later. 
%play an important role later. 

\medskip	
\bigskip % Optional: Adds a bit of extra vertical space before the line
\centerline{$*  *  *  * *$}
\bigskip % Optional: Adds a bit of extra vertical space after the line
\medskip

%\smallskip
%\begin{center}
%{*****}
%\end{center}
%\smallskip

Tyshkevich defined a graph as {\it decomposable} if it can be obtained as the composition of some split graph and another graph; otherwise, it is an {\it indecomposable graph}.  %Here is an interesting fact. 

How do we know if a graph is decomposable?  The answer, it turns out, lies in a graph's degree sequence!  Let us examine the composition operation again. In $(G, A, B) \circ H$, all the vertices in $A$ become adjacent to all the vertices in $H$. %Assume the degree sequence of $(G,A,B)$ is $(d_1,  \hdots, d_r; d_{r+1}, \hdots, d_n)$ and the degree sequence of $H$ is $(f_1, \hdots, f_t)$.  In $(G, A, B) \circ H$, all the vertices in $A$ become adjacent to all the vertices in $B$.  Thus,  the degrees of the vertices in $A$ increase by $|V(H)|$ while those in $B$ stay the same, and the degrees of the vertices in $H$ increase by $|A|$.   We also note that 
Thus,
\begin{itemize}
\item when $u \in A$, $deg(u) \ge |V(H)| + |A| -1 $ since $u$ is also adjacent to all the other vertices in $A$; 
\item when  $v \in V(H)$, $|A| \leq deg(v) \leq |A| + |V(H)|-1$ since the number of neighbors of $v$ in $H$ range from $0$ to $|V(H)|-1$;
\item when $w \in B$, $0 \leq deg(w) \leq |A|$ since the number of neighbors of $w$ in $G$ range from $0$ to $|A|$.
\end{itemize}
%the degrees of the vertices in $A$ increase by $t$ while those in $B$ stay the same, and the degrees of the vertices in $H$ increase by $r$.  Thus,  when $u \in A$, $deg(u) \ge r-1 + t$  since the vertices of $A$ already form a clique in $G$. On the other hand, when $v \in H$, $r \leq deg(v) \leq r + t-1$ since $v$ is adjacent to at most all the other vertices in $H$.  Finally, when $w \in B$, $deg(w) \leq r$.  
 It follows that in the degree sequence of  $(G, A, B) \circ H$, the degrees of the vertices in $A$ appear first followed by those in $H$ and finally by those in $B$.  %It has the following form:
% $$ (d_1 + |V(H)|,  \hdots, d_r + |V(H)|,  f_1 + |A|, \hdots, f_t + |A|, d_{r+1}, \hdots, d_n).$$  
 %That is, it can be divided into three contiguous parts so that the first and last parts consist of the degrees of the vertices in $A$ and $B$ respectively while the middle part consists of the degree sequence of the vertices in $H$.   
  Another way of putting it is  if the degree sequence of $(G, A, B) \circ H$ is $(d_1, d_2, \hdots, d_N)$, then   there are two indices $p$ and $q$ with $p + q < N$ so that $$(d_1, \hdots, d_p), (d_{p+1}, \hdots, d_{N-q}) \mbox{ and } (d_{N-q+1}, \hdots, d_N)$$
 are the degrees of the vertices in $A, H$ and $B$ respectively.   %It is also the case that condition ($\ddagger$) holds: 
 %the sum of the degrees of the vertices in $A$ satisfy a nice equation:
% $$ \sum_{i=1}^p d'_i = p(N-q-1) + \sum_{i = N-q+1}^N d'_i.$$
%Why?   

Following the  notation above, we also note that  in $(G, A, B) \circ H$, a vertex $u \in A$  is adjacent  to (i) $p-1$ other vertices in $A$, (ii) all the vertices in $H$ and (iii) its neighbors in $B$.  Hence, if $v_i$ is the vertex with degree $d_i$, and $deg_B(v_i)$ is the number of neighbors of $v_i$ in $B$, 
\begin{eqnarray*}
  \sum_{i=1}^p d_i & = &  \sum_{i=1}^p \left[(p-1) + (N-p-q) + deg_B(v_i) \right] \\
                               & = & p(p-1) + p(N-p-q) + \sum_{i=1}^p deg_B(v_i)  \\
                               & = &  p(N-q-1) +  \sum_{i = N-q+1}^N d_i. 
\end{eqnarray*}
Tyshkevich showed that these observations are both necessary and sufficient for proving that a graph is decomposable.

%conditions ($\dagger$) and ($\ddagger$) are also sufficient for showing that a graph is decomposable.

 \begin{theorem} {\bf \cite{Ty00}}
   A graph $G^*$  with degree sequence $(d_1, d_2,$ $\hdots$, $d_N)$  is decomposable if and only if  there exists indices $p, q$ with $0< p + q < N$ so that $$ \sum_{i=1}^p d_i = p(N-q-1) + \sum_{i = N-q+1}^N d_i.$$
 If $G^*$ is decomposable, then  $G^* = (G, A, B) \circ H$ where   $(d_1, \hdots, d_p)$, $(d_{p+1}, \hdots, d_{N-q})$ and $(d_{N-q+1}, \hdots, d_N)$ are the degrees of the vertices in $A$, $H$ and $B$ respectively.   
 \label{mainthm1}
 \end{theorem}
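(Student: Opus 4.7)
The plan is to prove both directions of the equivalence, harvesting necessity from the calculation already displayed just before the theorem and treating sufficiency as the technical heart.

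For necessity, I would assume $G^* = (G, A, B) \circ H$ and set $p = |A|$, $q = |B|$. The three bullet points preceding the theorem already give $deg(u) \ge N - q - 1$ for $u \in A$, $deg(v) \le N - q - 1$ for $v \in V(H)$, and $deg(w) \le p$ for $w \in B$. Since $p + q < N$ forces $|V(H)| \ge 1$, we have $p \le N - q - 1$, so the degrees of $A$-vertices dominate those of $H \cup B$-vertices (up to possible ties at the value $N - q - 1$) and the degrees of $B$-vertices are dominated by those of $A \cup H$-vertices. This makes the sum of the top $p$ entries of the sorted degree sequence equal to $\sum_{u \in A} deg(u)$ and the sum of the bottom $q$ entries equal to $\sum_{w \in B} deg(w)$, so the displayed calculation immediately delivers the identity.

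For sufficiency, I would assume the identity holds for some $0 < p + q < N$, let $A$ consist of $p$ vertices carrying the $p$ largest degrees, $B$ of $q$ vertices carrying the $q$ smallest, and $H = V(G^*) \setminus (A \cup B)$. Classifying edges incident to $A$ by where the other endpoint lies,
$$\sum_{u \in A} d_u \;=\; 2|E(A)| + |E(A, H)| + |E(A, B)| \;\le\; p(p-1) + p(N - p - q) + \sum_{w \in B} d_w,$$
using $|E(A)| \le \binom{p}{2}$, $|E(A, H)| \le p|H|$, and $|E(A, B)| = \sum_{w \in B} deg_A(w) \le \sum_{w \in B} d_w$. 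The right-hand side simplifies to $p(N - q - 1) + \sum_{i=N-q+1}^N d_i$, which by hypothesis equals $\sum_{i=1}^p d_i = \sum_{u \in A} d_u$. Consequently each of the three intermediate inequalities is tight, which respectively forces $A$ to induce a clique, every $A$--$H$ pair to form an edge, and every $B$-vertex to have all its neighbors in $A$ (so $B$ is stable with no $B$--$H$ edges). Setting $G = G^*[A \cup B]$, the pair $(A, B)$ is a $KS$-partition of $G$, and $G^* = (G, A, B) \circ G^*[H]$ realizes the decomposition with the prescribed paired degree sequences.

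The main obstacle I anticipate is ensuring the partition $(A, H, B)$ into top-$p$, middle, and bottom-$q$ is well-defined when ties occur at the cutoff degrees; this is harmless because both the sum condition and the upper-bound argument depend only on the multiset of the top $p$ and bottom $q$ degrees, so any tie-breaking yields the same conclusion. Degenerate boundary cases ($p = 0$ or $q = 0$, which correspond to an empty $K$- or $S$-part) also need only a brief check, since the identity then collapses to a statement about isolated or universal vertices that can be peeled off.
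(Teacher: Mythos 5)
Your proposal is correct, and it is worth noting how it relates to what the paper actually provides. The paper does not prove Theorem \ref{mainthm1} itself: it derives the necessity direction informally in the displayed degree computation just before the statement (the three degree bounds forcing the sorted order $A$, then $H$, then $B$, followed by the summation identity) and defers the converse entirely to Tyshkevich's original paper. Your necessity argument reproduces that computation, with the correct observation that ties at the cutoff values $N-q-1$ and $p$ are harmless because only the multisets of the top $p$ and bottom $q$ degrees enter the identity. Your sufficiency argument is the genuinely new content: the double count $\sum_{u \in A} d_u = 2|E(A)| + |E(A,H)| + |E(A,B)| \le p(p-1) + p(N-p-q) + \sum_{w \in B} d_w$ is valid, the right-hand side does simplify to $p(N-q-1) + \sum_{i=N-q+1}^N d_i$, and the hypothesis forces all three estimates to be tight, which correctly yields that $A$ is a clique joined completely to $H$ and that every neighbor of a $B$-vertex lies in $A$ (so $B$ is stable with no edges to $H$). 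That is exactly the structure of $(G^*[A\cup B], A, B) \circ G^*[H]$, and the degenerate cases $p=0$ and $q=0$ fall out of the same tightness argument without special treatment (they force $q$ isolated or $p$ dominant vertices, respectively). In short: the easy direction coincides with the paper's exposition, and the hard direction is a clean, self-contained extremal argument that the paper omits; it buys the reader a complete proof at the cost of only a few lines.
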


\medskip	
\bigskip % Optional: Adds a bit of extra vertical space before the line
\centerline{$*  *  *  * *$}
\bigskip % Optional: Adds a bit of extra vertical space after the line
\medskip

%\smallskip
%\begin{center}
%{*****}
%\end{center}
%\smallskip

When $G^*$ is the composition of many graphs,  there are many ways of expressing $G^*$ as the composition of two graphs because of the associativity of the composition operation.  That is, there are many pairs of indices $(p,q)$  that can satisfy the conditions in Theorem \ref{mainthm1}.  But if we choose  the pair with the smallest $p$ value, then $(G,A,B)$ is {\it guaranteed} to be an { indecomposable} graph.  We can then decompose $H$ further.  In this way, we obtain a decomposition of $G^*$ into indecomposable graphs.   This leads us to Tyshkevich's main result.

\begin{theorem} {\bf (The Canonical Decomposition Theorem of Tyshkevich \cite{Ty00})}
Every graph $G^*$ can be represented as a composition 
$$ G^*= (G_k , A_k, B_k) \circ (G_{k-1}, A_{k-1}, B_{k-1})  \circ \hdots \circ (G_1, A_1, B_1) \circ G_0$$
where each $G_i$ is an indecomposable graph.  This decomposition is canonical; that is, let
$$G' = (G'_\ell, A'_\ell, B'_\ell) \circ (G'_{\ell-1}, A'_{\ell-1}, B'_{k-1}) \circ \hdots \circ (G'_1, A'_1, B'_1) \circ G'_0.$$
Then $G^* \cong G'$ if and only if $k = \ell$, $G_0 \cong G'_0$ and $(G_i, A_i, B_i) \cong (G'_i, A'_i, B'_i)$ for $i = 1, \hdots, k$. 
\label{mainthm2}
\end{theorem}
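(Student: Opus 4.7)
The proof consists of two parts---existence and uniqueness---both proceeding by induction on $N = |V(G^*)|$, with the base case $N = 1$ trivial. For existence, my plan is: if $G^*$ is indecomposable, set $k = 0$ and $G_0 = G^*$; otherwise, invoke Theorem \ref{mainthm1} but carefully choose the pair $(p, q)$ with the smallest $p$-value (breaking ties by the smallest $q$). This yields a decomposition $G^* = (G_k, A_k, B_k) \circ H$, and I would then apply the inductive hypothesis to $H$, which has strictly fewer vertices, to obtain its canonical decomposition, and concatenate the result.

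The crucial step for existence will be showing that the minimal choice of $(p,q)$ forces $(G_k, A_k, B_k)$ to be indecomposable. My argument is by contradiction: if $G_k$ decomposes as $G_k = (G', A', B') \circ H'$, the associativity of $\circ$ lets me rewrite $G^* = (G', A', B') \circ H''$ for some graph $H''$ built from $H'$ and $H$. When $|A'| > 0$, this new decomposition has top-layer $A$-size $|A'| < |A_k| = p$, violating minimality. When $|A'| = 0$, a direct analysis of the equation in Theorem \ref{mainthm1} shows that every stable set of size $\geq 2$ is decomposable, so an indecomposable $G_k$ with $A_k = \emptyset$ must be a single vertex; this edge case is handled by the smallest-$q$ tiebreaker, which forces $q = 1$ whenever the minimal $p$ is $0$.

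For uniqueness, suppose $G^* \cong G'$ with canonical decompositions as in the statement. Because isomorphic graphs share a degree sequence, the set of valid $(p,q)$-pairs from Theorem \ref{mainthm1} coincides on the two sides, and in particular the minimal pair $(p_0, q_0)$ agrees. First I would establish that the top piece, as a split graph $(G_k, A_k, B_k)$, is determined by this minimal pair: the vertices of $A_k$ (respectively $B_k$) correspond to the first $p_0$ (respectively last $q_0$) entries of the degree sequence up to reordering within tied degrees, and the induced subgraph on these $p_0 + q_0$ vertices together with the bipartite adjacency pattern into the remaining vertices determines $(G_k, A_k, B_k)$ up to split-graph isomorphism. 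Once the top pieces match, the remainders $H$ and $H'$ are isomorphic graphs of smaller order, and the inductive hypothesis finishes.

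The hardest part will be establishing uniqueness of the top piece as a \emph{split graph} rather than merely as a graph. By Proposition \ref{uniqueKSprop}, balanced split graphs have a unique $KS$-partition, but an unbalanced $G_k$ admits both a $K$-max and an $S$-max partition related by swing vertices, and a priori the two decompositions of $G^*$ might assign different $KS$-partitions to their top indecomposable piece. My plan is to show that the minimal-$(p,q)$ rule is compatible with these alternative $KS$-partitions, in the sense that swapping a swing vertex across the partition yields an isomorphic composition $(G_k, A_k, B_k) \circ H$; this reconciliation between local split-graph ambiguity and global canonical structure, rather than the existence argument, is where the technical core of the proof lies.
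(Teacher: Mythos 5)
The paper does not actually prove Theorem \ref{mainthm2}: it is quoted from Tyshkevich \cite{Ty00}, and the surrounding text offers only the informal remark that repeatedly extracting the valid pair with smallest $p$ yields an indecomposable top component. So your proposal has to stand on its own, and as a plan it follows that same outline. Two points need attention. In the existence step, your claim that $|A'|>0$ forces $|A'|<|A_k|=p$ is false: if $G_k$ decomposes as $(G',A',B')\circ H'$ with $A'=A_k$ and $V(H')\subseteq B_k$ (which happens exactly when some vertex of $B_k$ is adjacent to all of $A_k$), the new valid pair is $(p,q')$ with $q'<q$, so the lexicographic tie-break on $q$ is needed throughout, not only ``whenever the minimal $p$ is $0$.'' This is repairable, but the case split as written does not close.

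The more serious issue is in the uniqueness half, and the difficulty is not the one you single out as the technical core. You worry that an unbalanced top component admits both a $K$-max and an $S$-max partition; but by Proposition \ref{indecomsplit} an indecomposable split graph with at least two vertices is balanced, so for a \emph{canonical} decomposition that ambiguity never arises, and single-vertex components of a given type are trivially isomorphic to one another. The step that actually needs work comes earlier in your outline: from an abstract isomorphism $G^*\cong G'$ you cannot conclude that it carries $V(G_k)$ onto $V(G'_\ell)$, because the degree ranges of $A_k$, $V(H)$ and $B_k$ overlap at their endpoints --- a vertex of $A_k$ with no neighbor in $B_k$ (a swing vertex) and a dominant vertex of $H$ have the same closed neighborhood, and a vertex of $B_k$ adjacent to all of $A_k$ and an isolated vertex of $H$ have the same open neighborhood. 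One must show that an arbitrary isomorphism can be corrected by such twin transpositions to one respecting the partition $V(G_k)\cup V(H)$ and the $KS$-partition of $G_k$; this is precisely the content of Lemma \ref{autchar3}, and it is the missing ingredient behind your assertion that the top piece is ``determined up to split-graph isomorphism.'' With that lemma, plus the balancedness observation above, your induction does go through.
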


Using Theorem \ref{mainthm2}, we presented a linear-time algorithm called {\tt Decompose($G^*$)} in \cite{Ch25} that outputs a stack that contains the (paired) degree sequence of $G_0$ on top followed by the paired degree sequences of  $G_1, \hdots, G_k$.

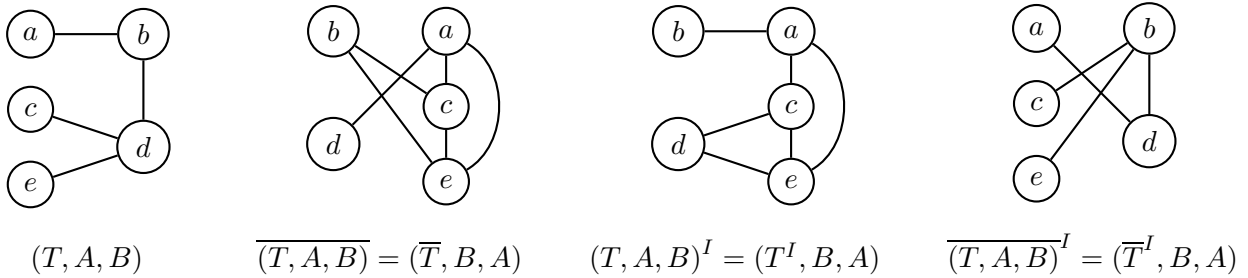
\begin{figure}[t]
\bigskip

\begin{center}
\begin{tikzpicture}
%\scalebox{0.8}
{
\begin{scope}[every node/.style={circle,thick,draw}]

  %  \node(r) at (0,0) {r};
    \node(a) at (0,0.5) {$e$};
  %  \node(b) at (0,-1) {b};
   % \node(c) at (2,-1) {c};
    \node(b) at (0, 1.5) {$c$};
    \node(c) at (0, 2.5) {$a$};
    %\node(f) at (1, -2) {f};
    %\node(g) at (2,-2) {g};
    %\node(h) at (3, -2) {h};
    \node(d) at (1.5, 1) {$d$};
    \node(e) at (1.5, 2.5) {$b$};
  \end{scope}
  \node(T) at (0.75, -0.5) {$(T, A, B)$};

%\path[-, draw, thick] (r) -- (a);
%\path[-, draw, thick] (r) -- (b);
%\path[-, draw, thick] (r) -- (c);
\path[-, draw, thick] (a) -- (d);
\path[-, draw, thick] (b) -- (d);
\path[-, draw, thick] (c) -- (e);
\path[-, draw, thick] (d) -- (e);
%\path[-, draw, thick] (c) -- (f);
%\path[-, draw, thick] (c) -- (g);
%\path[-, draw, thick] (c) -- (h);
}
\end{tikzpicture}
\hspace*{2em}
\begin{tikzpicture}
{
\begin{scope}[every node/.style={circle,thick,draw}]

    \node(e) at (0,0.5) {$e$};
    \node(c) at (0, 1.5) {$c$};
    \node(a) at (0, 2.5) {$a$};
    \node(d) at (-1.5, 1) {$d$};
    \node(b) at (-1.5, 2.5) {$b$};
\end{scope}
\node(T) at (-0.75, -0.5) {$\overline{(T, A, B)} = (\overline{T}, B, A)$};

\path[-, draw, thick] (a) -- (c);
\draw[-][-, thick] (a) to[out=330, in=30] (e);
%\path[-, draw, thick] (a) -- (e);
\path[-, draw, thick] (c) -- (e);
\path[-, draw, thick] (b) -- (c);
\path[-, draw, thick] (b) -- (e);
\path[-, draw, thick] (a) -- (d);
}
\end{tikzpicture}
\hspace*{1em}
\begin{tikzpicture}
{
\begin{scope}[every node/.style={circle,thick,draw}]

    \node(e) at (1.5,0.5) {$e$};
    \node(c) at (1.5, 1.5) {$c$};
    \node(a) at (1.5, 2.5) {$a$};
    \node(d) at (0, 1) {$d$};
    \node(b) at (0, 2.5) {$b$};
\end{scope}
\node(T) at (0.75, -0.5) {${(T, A, B)}^I = ({T^I}, B, A)$};

\path[-, draw, thick] (a) -- (b);
\path[-, draw, thick] (c) -- (d);
\path[-, draw, thick] (e) -- (d);
\path[-, draw, thick] (a) -- (c);
\draw[-][-, thick] (a) to[out=330, in=30] (e);
%\path[-, draw, thick] (a) -- (e);
\path[-, draw, thick] (c) -- (e);

}
\end{tikzpicture}
\hspace*{1em}
\begin{tikzpicture}
{
\begin{scope}[every node/.style={circle,thick,draw}]

    \node(e) at (0,0.5) {$e$};
    \node(c) at (0, 1.5) {$c$};
    \node(a) at (0, 2.5) {$a$};
    \node(d) at (1.5, 1) {$d$};
    \node(b) at (1.5, 2.5) {$b$};
\end{scope}
\node(T) at (0.75, -0.5) {$\overline{(T, A, B)}^I = (\overline{T}^I, B, A)$};

\path[-, draw, thick] (b) -- (d);
\path[-, draw, thick] (a) -- (d);
\path[-, draw, thick] (b) -- (c);
\path[-, draw, thick] (b) -- (e);
%\draw[-][-, thick] (a) to[out=330, in=30] (e);
%\path[-, draw, thick] (a) -- (e);
%\path[-, draw, thick] (c) -- (e);

}
\end{tikzpicture}

\end{center}
\caption{Given the tree $T$ on the left with $A = \{b,d\}$ and $B = \{a, c, e\}$, the complement,  the inverse and the inverse of the complement of $(T, A, B)$ are shown on the right.}
\label{fig4}
\end{figure}

\section{Indecomposable Unigraphs}
\label{sec3}

In the second part of her paper, Tyshkevich noted the implication of Theorem \ref{mainthm2} for unigraphs. 

%A nice corollary of Tyshkevich's Canonical Decomposition Theorem is the following:

\begin{theorem}{\bf \cite{Ty00}}
 $G$ is a unigraph if and only if each indecomposable component in the canonical decomposition of $G$ is also a unigraph.
 \label{thmcompose}
\end{theorem}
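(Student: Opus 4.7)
The plan is to combine Theorems \ref{mainthm1} and \ref{mainthm2}. Theorem \ref{mainthm1} implies that the paired degree sequences of the indecomposable components $G_0, G_1, \ldots, G_k$ are already encoded in $D_G$: any graph $G^\ast$ with $D_{G^\ast} = D_G$ has a canonical decomposition of the same length whose components $G_0^\ast, (G_1^\ast, A_1^\ast, B_1^\ast), \ldots, (G_k^\ast, A_k^\ast, B_k^\ast)$ match those of $G$ slot by slot in (paired) degree sequence. Theorem \ref{mainthm2} then pins down when $G \cong G^\ast$ component by component. I would prove each direction separately using this observation.

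For the ($\Leftarrow$) direction, suppose every $G_i$ is a unigraph and let $G^\ast$ be any graph with $D_{G^\ast} = D_G$. Applying the observation above yields a canonical decomposition of $G^\ast$ whose $i$-th component has the same degree sequence as $G_i$. Since $G_i$ is a unigraph, $G_i^\ast \cong G_i$ as graphs. Feeding this into Theorem \ref{mainthm2} gives $G \cong G^\ast$, so $G$ is a unigraph.

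For the ($\Rightarrow$) direction, I would prove the contrapositive. Suppose $G_j$ is not a unigraph, witnessed by a graph $H$ with $D_H = D_{G_j}$ and $H \not\cong G_j$. I would substitute $H$ for $G_j$ in the decomposition of $G$ to obtain a new graph $G^\ast$. For $j = 0$ this substitution is immediate; for $j \geq 1$ I first equip $H$ with a KS-partition of the correct sizes, which is possible because splitness and the admissible KS-partition sizes are properties of the degree sequence \cite{HaSi81}. The resulting $G^\ast$ has $D_{G^\ast} = D_G$ (the degree sequence of a composition is determined by the component degree sequences and the sizes of the $A$-parts), and one checks via Theorem \ref{mainthm1} applied to $D_H = D_{G_j}$ that $H$ inherits indecomposability so the substitution really is the canonical decomposition of $G^\ast$. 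Theorem \ref{mainthm2} then gives $G^\ast \not\cong G$, so $G$ is not a unigraph.

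The main technical obstacle I anticipate is in the ($\Leftarrow$) direction: from a plain isomorphism $G_i \cong G_i^\ast$ one must extract an isomorphism of the paired split graphs $(G_i, A_i, B_i) \cong (G_i^\ast, A_i^\ast, B_i^\ast)$ demanded by Theorem \ref{mainthm2}. The cleanest route is to argue that for an indecomposable split component the KS-partition is essentially forced, either because the component is balanced and hence has a unique KS-partition by Proposition \ref{uniqueKSprop}, or because the paired degree sequence together with the location of the swing vertex pins $A_i$ and $B_i$ down up to an automorphism of $G_i$ that preserves the partition. An analogous but minor subtlety on the ($\Rightarrow$) side is verifying that the replacement $H$ can always be paired into a split graph realizing the same paired degree sequence as $(G_j, A_j, B_j)$, which again reduces to the degree-sequence characterization of split graphs.
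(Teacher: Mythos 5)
The paper does not actually prove this theorem---it cites Tyshkevich and merely notes that it is an implication of Theorem \ref{mainthm2}---and your argument is a correct, complete derivation along exactly that line: the canonical decomposition is determined by the degree sequence alone (Theorem \ref{mainthm1}), so unigraph components force $G \cong G^*$ for any degree-equivalent $G^*$, while a non-unigraph component can be swapped for a non-isomorphic realization to violate uniqueness. Your handling of the paired-isomorphism subtlety via Propositions \ref{uniqueKSprop} and \ref{indecomsplit} (with the single-vertex types read off the paired degree sequence) is the right fix, so I have nothing substantive to add.
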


Equally  important, Tyshkevich described {\it all} the indecomposable unigraphs.  The proof is substantial; it is twenty pages long with many cases to sort through.  Tyshkevich was able to keep the list of indecomposable unigraphs manageable because she made use of the complements and inverses of graphs. 

%By utilizing complements and inverses,  Tyshkevich was able to shrink the list significantly.   We start with those that are not split graphs. 
%Before we present them,  let us consider two graph operations.  
  
  Recall that the  {\it complement} of $G$, $\overline{G}$, has the same vertex set as $G$ and two vertices are adjacent if and only if they are not adjacent in $G$. When $G$ is a split graph with $KS$-partition $(A,B)$,  the complement of $(G,A,B)$ is $\overline{(G,A,B)} = (\overline{G}, B, A)$. Its {\it inverse} is $(G,A,B)^I = (G^I, B, A)$ where $G^I$ is obtained from $G$ by deleting the edges $\{aa': a, a' \in A\}$ and adding the edges $\{bb': b, b' \in B\}$.  The two operations can then be combined to  create the {\it complement of the inverse} of $(G, A, B)$, which is also the {\it inverse of the complement} of $(G, A, B)$.  Figure \ref{fig4} shows the two graph operations.

%shows the complement, the inverse and the inverse of the complement of the tree $(T, \{b,d\}, \{a, c, e\})$ from Figure \ref{fig1}.

%\bigskip

% \begin{proposition}
  %If $G$ is an indecomposable non-split graph, then so is its complement $\overline{G}$.  If $(G, A, B)$ is an indecomposable split graph then so are its complement $\overline{(G,A,B)}$,  its inverse  $(G, A, B)^I$ and the inverse of its complement $\overline{(G,A,B)}^I$.
   %\end{proposition}

%\smallskip
%\begin{center}
%{*****}
%\end{center}
%\smallskip

%We now describe the indecomposable unigraphs that are not split graphs. 

%It is probably not surprising that there are many indecomposable unigraphs.  By utilizing complements and inverses,  Tyshkevich was able to shrink the list significantly.   We start with those that are not split graphs. 

%We now describe Tyshkevich's classification of  the indecomposable non-split unigraphs.   The first two are $C_5$ and $mK_2$.  
%$mK_2$ be the graph that  consists of $m$ copies of $K_2$;  it is the building block of the other indecomposable unigraphs.  

The  indecomposable non-split unigraphs are described using four types of  graphs.  The first two  are $C_5$ and $mK_2$.   The third one is  $U_2(m, \ell)$, the graph formed by the disjoint union of $mK_2$ and the star $K_{1, \ell}$.  The fourth one is $U_3(m)$, the graph formed by a center vertex, $mK_2$ and $P_3$.  The center vertex is adjacent to all the vertices of $mK_2$, creating $m$ triangles, and to the endpoints of $P_3$, creating a 4-cycle.   These non-split unigraphs are illustrated in Table \ref{UnsplittableTable}.

\begin{theorem}
 \label{Tyshkevich-unsplittable}
 {\bf \cite{Ty00}} Let $G$ be an indecomposable non-split unigraph with two or more vertices.  Then $G$ or $\overline{G}$ is one of the following graphs: 
% \begin{center}
 $C_5, m K_2$ with $m \ge 2$, $ U_2(m,\ell)$ with $m \ge 1, \ell \ge 2$ or $U_3(m)$ with $m \ge 1$. 
 %\end{center}
 \end{theorem}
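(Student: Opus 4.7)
The plan is to prove the theorem in two directions: a verification that each graph in the stated list is an indecomposable non-split unigraph, and a classification showing these (together with their complements) exhaust all such graphs. Throughout, I would lean on complementation as a labor-saving device, since taking the complement preserves being a unigraph, being indecomposable, and being non-split; this matches the statement's ``$G$ or $\overline{G}$'' phrasing and roughly halves the work.

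For the verification direction, I would handle $C_5$, $mK_2$ with $m \ge 2$, $U_2(m,\ell)$ with $m \ge 1, \ell \ge 2$, and $U_3(m)$ with $m \ge 1$ in turn. Being non-split follows in each case by exhibiting an induced $2K_2$, $C_4$, or $C_5$ and invoking the F\"oldes--Hammer characterization of split graphs as $\{2K_2, C_4, C_5\}$-free graphs; for instance, $U_3(m)$ contains an induced $C_4$ consisting of the hub, the two endpoints of the $P_3$, and the middle vertex of the $P_3$. Indecomposability follows by checking that the identity in Theorem~\ref{mainthm1} fails for every $(p,q)$ with $0 < p+q < N$, which is routine for these highly structured degree sequences. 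The unigraph property can be established directly: one shows that the degree sequence rigidly forces the edge set, e.g., in $U_3(m)$ the hub is identified as the unique vertex of maximum degree, and removing it leaves $mK_2 \cup P_3$ whose unique realization is well-known.

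For the classification direction, let $G$ be an indecomposable non-split unigraph on $N \ge 2$ vertices. The starting point is again F\"oldes--Hammer: $G$ must contain an induced $2K_2$, $C_4$, or $C_5$, and since $C_4 = \overline{2K_2}$, after possibly replacing $G$ by $\overline{G}$ we may assume $G$ contains an induced $2K_2$ or an induced $C_5$. Next, I would exploit the indecomposability condition (failure of the Tyshkevich identity for all valid $(p,q)$) together with the unigraph condition (no two non-isomorphic realizations of the degree sequence) to funnel $G$ into one of the listed families. In the $C_5$ case one shows $G$ must equal $C_5$ itself, since attaching any further vertex while preserving both indecomposability and unigraphicity fails; in the $2K_2$ case one analyzes how vertices outside a chosen maximum induced matching can attach to it.

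The main obstacle will be the detailed case analysis in the $2K_2$ case. Here one must control the attachment patterns of vertices lying outside a maximum induced matching, showing that any deviation from the patterns defining $mK_2$, $U_2(m,\ell)$, and $U_3(m)$ either induces a nontrivial Tyshkevich split (contradicting indecomposability) or produces a competing realization of the degree sequence via a standard edge-swap (contradicting the unigraph hypothesis). Organizing this by the number of non-matching vertices and their degree profiles, while repeatedly invoking the Tyshkevich identity to rule out alternate realizations and nontrivial decompositions, is exactly where Tyshkevich's original proof expends most of its twenty pages, and I would expect a faithful proof plan to inherit that length.
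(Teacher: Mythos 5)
This theorem is not proved in the paper at all: it is imported verbatim from Tyshkevich \cite{Ty00}, and the authors explicitly note that her proof of the classification of indecomposable unigraphs runs twenty pages of case analysis. So there is no in-paper argument to measure you against; the only meaningful comparison is with Tyshkevich's original proof, and against that standard your proposal has a genuine gap. The verification direction (each of $C_5$, $mK_2$, $U_2(m,\ell)$, $U_3(m)$ is a non-split, indecomposable unigraph) is plausibly routine and your sketches are sound --- the F\"oldes--Hammer $\{2K_2, C_4, C_5\}$-free characterization gives non-splitness, failure of the identity in Theorem~\ref{mainthm1} gives indecomposability, and degree-sequence rigidity gives unigraphicity. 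But the substance of the theorem is the converse: that \emph{no other} indecomposable non-split unigraph exists. Your plan for that direction consists of reducing (via complementation) to the case where $G$ contains an induced $2K_2$ or $C_5$ and then asserting that an analysis of ``attachment patterns'' to a maximum induced matching will funnel $G$ into the three listed families. That analysis is the entire theorem, and you do not carry out any of it --- you do not even establish the first nontrivial claim, namely that in the $C_5$ case $G$ must equal $C_5$. Saying ``any deviation either induces a nontrivial Tyshkevich split or produces a competing realization via an edge-swap'' names the two tools but proves nothing; one would need to exhibit, for each possible deviation, the specific pair $(p,q)$ witnessing decomposability or the specific degree-preserving 2-switch yielding a non-isomorphic realization, and to argue that the enumeration of deviations is exhaustive.

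To be fair, you are candid that the case analysis is deferred and would inherit the length of the original. But a proof plan whose only concretely executed steps are the easy inclusion direction, and which leaves the exhaustiveness argument as a labelled black box, does not constitute a proof of the classification. If you intend to actually prove this statement rather than cite it, you would need either to reproduce Tyshkevich's case analysis or to find a genuinely shorter route (e.g., an induction on the number of distinct degrees, or a structural argument about how unigraphs restrict to the neighborhood of a minimum-degree vertex); nothing in the proposal indicates such a shortcut.
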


 The indecomposable split unigraphs, on the other hand, are described using five types of graphs.  The first one is the single-vertex graph.   The next one is $S(p,q)$, the graph that consists of $q$ copies of the star $K_{1,p}$ whose $q$ centers form a complete graph.  The third one is $S_2(p_1, q_1, p_2, q_2, \cdots, p_m, q_m)$ which  is a generalization of $S(p,q)$; it consists of $q_i$ copies of $K_{1, p_i}$ for $i = 1, \hdots, m$ and the centers of all the stars form a complete graph.   The fourth graph is $S_3(p, q_1, q_2)$ which is made up of $S(p, q_1)$ and $S(p+1, q_2)$ and all their star's centers are pairwise adjacent to each other.  Additionally, there is a vertex $e$ that is adjacent to the centers of the stars in $S(p, q_i)$ only.    The last one is $S_4(p,q)$ which is made up of $S_3(p, 2, q)$ and a vertex $f$ that is adjacent to all the vertices of $S_3(p,2,q)$ except for $e$.  These split unigraphs are illustrated in Table \ref{SplittableTable}.

 \begin{theorem}
 {\bf \cite{Ty00}} Let $G$ be an indecomposable split unigraph.  Then $G$,  $\overline{G}$, $G^I$ or $\overline{G^I}$ is one of the following: 
% \begin{center}
a single-vertex graph,  $S(p,q)$ with $p \ge 1, q \ge 2, $  $ S_2(p_1, q_1, p_2, q_2, \cdots, p_m, q_m)$ with $m \ge 2$ and $p_1 > p_2 > \hdots > p_m,$   $ S_3(p, q_1, q_2)$ with $p \ge 1, q_1 \ge 2, q_2 \ge 1, \;$  or $\;S_4(p,q)$ with $p \ge 1, q \ge 1$.
  \label{Tyshkevich-splittable}

 %\end{center}
 \end{theorem}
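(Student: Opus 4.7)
The plan is to classify indecomposable split unigraphs by studying the bipartite subgraph connecting the $K$-part and the $S$-part, using the complement and inverse operations to control the number of distinct cases. First I would check that each of $G \mapsto \overline{G}$, $G \mapsto G^I$ and $G \mapsto \overline{G^I}$ preserves the properties of being split, being a unigraph, and being indecomposable. (The first two are essentially definitional; the third follows because swapping $A$ and $B$ and flipping inside-edges only re-indexes the sum identity of Theorem \ref{mainthm1}.) It then suffices to prove the classification within an orbit under these four symmetries, so I may normalize, for example, the relative sizes of $|A|$ and $|B|$ or the degrees along the $A$-to-$B$ boundary.

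Since $A$ induces a clique and $B$ is independent, $(G, A, B)$ is completely determined by the bipartite graph $H = (A, B; E_{AB})$. The paired degree sequence of $G$ determines the bidegree sequence of $H$, and since $G$ is a unigraph, it must determine $H$ up to isomorphism. Here I would invoke the classical characterization of bipartite graphs whose bidegree sequence has a unique realization, which essentially forces $H$ to be a disjoint union of stars with at most one mildly exceptional component. This is exactly the template that generates the $S$-family: joining the centers of several stars $K_{1, p_i}$ into a clique yields $S(p, q)$ and $S_2(p_1, q_1, \ldots, p_m, q_m)$, while the exceptional component houses the extra vertex $e$ of $S_3$ and the pair $e, f$ of $S_4$.

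Next I would apply Theorem \ref{mainthm1} to enforce indecomposability. Decomposability amounts to finding indices $p, q$ satisfying the sum identity, which geometrically says that a prefix of high-degree $A$-vertices is fully adjacent to everything outside a suffix of low-degree $B$-vertices. When several stars in $H$ share a common size, or when the exceptional component is absent, such a prefix/suffix split always exists, and a proper split factor peels off. Ruling this out is what forces the strict inequalities $p_1 > p_2 > \cdots > p_m$ in $S_2$, the bounds $q_1 \ge 2$ and $q_2 \ge 1$ in $S_3$, and the presence of the defect vertex $e$ in $S_3$ or the pair $\{e, f\}$ in $S_4$; without them, a uniform bundle of stars could be stripped away and the graph would split nontrivially.

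The main obstacle will be the case analysis needed to pass from ``$H$ is stars plus an optional exception, and nothing peels off'' to ``$G$ lies in one of the four listed families up to $\overline{\phantom{G}}$ and ${}^I$''. Each choice of star-size multiset, placement of the defect, and orientation of the exception produces a candidate that must be checked individually, in both directions: one must verify that the listed graphs genuinely are indecomposable unigraphs (by showing the sum identity has no valid $(p, q)$ and that no degree-preserving $2$-switch changes the graph), and that no other configuration survives. I see no way to bypass this case work, which is precisely the source of Tyshkevich's roughly twenty pages; the most promising organizing principle is to stratify by how the exceptional component of $H$ meets the clique side, which is what distinguishes $S$, $S_2$, $S_3$, and $S_4$ from one another.
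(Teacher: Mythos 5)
This statement is quoted from Tyshkevich's paper and is not proved in the present write-up; the text only remarks that the original proof ``is twenty pages long with many cases to sort through.'' So there is no in-paper argument to compare yours against, and your proposal has to stand on its own. As a roadmap it points in the right direction: reducing modulo the four operations $G$, $\overline{G}$, $G^I$, $\overline{G^I}$, studying the bipartite graph $H$ between the $K$-part and the $S$-part, and using the degree-sum criterion of Theorem \ref{mainthm1} to enforce indecomposability are all genuine ingredients of Tyshkevich's argument. But as a proof it has a real gap at its center.

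The gap is the step where you ``invoke the classical characterization of bipartite graphs whose bidegree sequence has a unique realization'' and assert that it ``essentially forces $H$ to be a disjoint union of stars with at most one mildly exceptional component.'' That characterization is not correctly stated, and the reduction to it is not justified. Uniquely realizable bipartite bidegree sequences include, for instance, complete bipartite graphs and bipartite complements of perfect matchings, which are not disjoint unions of stars; the list only collapses to the star-based families $S$, $S_2$, $S_3$, $S_4$ after one quotients by the complement and inverse operations \emph{and} imposes indecomposability, and establishing that collapse is precisely the case analysis you acknowledge skipping. Separately, the equivalence ``$G$ is a unigraph if and only if $H$ is determined by its bidegree sequence'' itself needs an argument: you must show that every realization of $D_G$ is again a split graph with the same part sizes and with the same degrees assigned to the same sides (this can be extracted from the facts that splitness and decomposability are degree-sequence properties and that indecomposable split graphs are balanced, but none of that is said). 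As written, the proposal is an accurate description of how such a proof would be organized, not a proof of Theorem \ref{Tyshkevich-splittable}.
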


When $\overline{G} = H$, then $G = \overline{H}$.  Similarly, when $G^I = H$, then $G = H^I$.  Thus, another way of stating Theorems \ref{Tyshkevich-unsplittable} and \ref{Tyshkevich-splittable} is that when $G$ is an indecomposable non-split unigraph,  $G$ is isomorphic to one of the four types graphs in Table \ref{UnsplittableTable} or their complements.  On the other hand,  when $G$ is an indecomposable split graph, $G$ is isomorphic to one of the five types graphs in Table \ref{Splittable}, their complements, inverses or complements of the inverses.  Thus, whenever $G$ is an indecomposable unigraph, we can refer to its {\it type} (and related parameters). For example,  $C_4$  is  isomorphic to $\overline{2K_2}$ so its type is {\it the complement of $mK_2$ with $m = 2$}.  On the other hand, the type of the tree in Figure \ref{fig1} is {\it $S_2(p_1, q_1, p_2, q_2)$ with $p_1 = 2, q_1 = p_2 = q_2 = 1$}.  When $G$ is a single-vertex split graph, we refer to its type as $K_1$ or $S_1$ if the vertex is in the $K$-part or in the $S$-part of the split graph respectively.

%\newpage 

A subtle but important consequence of Theorems \ref{thmcompose} to \ref{Tyshkevich-splittable} is that unigraphs and their indecomposable components can be recognized in linear time.  The fact that unigraphs can be recognized efficiently is not new; Kleitman and Li \cite{KL75}  and Borri et al.~\cite{Borri2011} have already designed linear-time algorithms.   What {\it is} new is that their indecomposable components  can also be identified quickly.  

\begin{theorem}
Let $G$ be a graph.  There is a linear-time algorithm that determines if $G$ is a unigraph.  Moreover, when $G$ is a unigraph, it outputs the type (and related parameters) of each indecomposable component of $G$.
%Moreover, when $G$ is a unigraph, it outputs the (paired) degree sequence and type (as well as  associated parameters) of each indecomposable component of $G$.  
\end{theorem}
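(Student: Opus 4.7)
The plan is to combine the linear-time \texttt{Decompose} algorithm of \cite{Ch25} with a linear-time type-recognition subroutine applied to each indecomposable component. By Theorem \ref{thmcompose}, $G$ is a unigraph if and only if every component $G_i$ in its canonical decomposition is a unigraph, and Theorems \ref{Tyshkevich-unsplittable} and \ref{Tyshkevich-splittable} enumerate all possibilities. So the algorithm first calls \texttt{Decompose}($G$) to obtain the (paired) degree sequences of $G_0, G_1, \ldots, G_k$ in $O(n)$ time, then examines each component in turn.

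For each $G_i$, I would run a subroutine \texttt{IdentifyType}($G_i$) that attempts to match its (paired) degree sequence against the parametrized templates of the types in Tables \ref{UnsplittableTable} and \ref{SplittableTable}. Each type has a degree sequence of a very constrained shape: $mK_2$ is $(1^{2m})$; $C_5$ is $(2^5)$; $U_2(m,\ell)$ is $(\ell, 1^{2m+\ell})$; $U_3(m)$ is $(2m+2, 2^{2m+3})$; $S(p,q)$ has paired sequence $(p+q-1,\ldots,p+q-1\,;\,1,\ldots,1)$ with $q$ entries in the $K$-part and $pq$ in the $S$-part; and analogous explicit descriptions hold for $S_2$, $S_3$, and $S_4$. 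Since complementation and inversion act as explicit transformations on a paired degree sequence, we can compute the (paired) degree sequences of $\overline{G_i}$, $G_i^I$, and $\overline{G_i^I}$ from that of $G_i$ in $O(|V(G_i)|)$ time, and we test each of the four variants against the catalog. If one matches, we read off the parameters directly and report the type; if none does, $G_i$ is not an indecomposable unigraph and we conclude that $G$ is not a unigraph.

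The per-component work is $O(|V(G_i)|)$ because each template is verified by a single pass over the sorted (paired) sequence, checking that blocks of entries take the prescribed values and that parameter bounds such as $p \geq 1$, $q \geq 2$, or $m \geq 2$ are satisfied. Summing over all components, whose vertex sets partition $V(G)$, yields $O(n)$ total work. Correctness then reduces to the fact that the candidate templates cover \emph{exactly} the indecomposable unigraphs (Theorems \ref{Tyshkevich-unsplittable} and \ref{Tyshkevich-splittable}), combined with Theorem \ref{thmcompose}.

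The main obstacle is not conceptual but organizational: four non-split families (each with its complement) plus five split families (each with complement, inverse, and complement-of-inverse) give on the order of thirty templates. Care is needed to ensure that parameters --- especially the multi-index $(p_1, q_1, \ldots, p_m, q_m)$ of $S_2$, which must be recovered from a sequence of blocks of equal values --- are identified unambiguously, and that the strict inequalities on the parameters are enforced so that a sequence matching a degenerate template is not mistakenly accepted. Once the case analysis is tabulated, the algorithm and its linear-time bound follow by direct comparison.
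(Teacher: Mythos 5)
Your proposal matches the paper's own argument: the paper's \texttt{IsUnigraph} likewise calls \texttt{Decompose}, then for each component applies the variant transformations (original, complement, inverse, complement-of-inverse) to its (paired) degree sequence and pattern-matches against the templates of Tables \ref{UnsplittableTable} and \ref{SplittableTable}, verifying parameter bounds, with correctness resting on Theorems \ref{thmcompose}, \ref{Tyshkevich-unsplittable} and \ref{Tyshkevich-splittable}. The only cosmetic difference is that the paper first runs \texttt{DetermineSplit} on $G_0$ to decide whether to use the non-split or split catalog, whereas you would try all templates on every component; both yield the same linear-time bound.
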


  The theorem holds because the graphs in Tables \ref{UnsplittableTable} and  \ref{SplittableTable} have very specific patterns for their degree sequences.  We provide a full description of the linear-time algorithm called {\tt IsUnigraph($G$)} in the appendix.   The algorithm is based on our work in \cite{Ch25}, which is similar to that of Calamoneri and Peterschi \cite{CaPe11}. % The main difference is that both our algorithms assume that the input graph is already a unigraph and, rather than outputting the type of each indecomposable component, we use the information to compute a graph parameter.

 Tyskevich's results can also be used to generate unigraphs. 
 %We also note that Tyshkevich's results are also about the {\it generation} of unigraphs. 
Prior to her work, it was unclear how one would go about creating, say, a hundred unigraphs that are not from the same subfamily like the threshold graphs.  Now we know because she has identified the  basic pieces  for assembling unigraphs.   Below are unigraphs with ten vertices.  Some are indecomposable themselves or formed by composing an  indecomposable unigraph with any number of indecomposable split unigraphs.  %The graphs $K_1$ and $S_1$ refer to  singletons of type $K_1$ and $S_1$ respectively.

\begin{itemize}
\item $8^{10}$  from $\overline{5 K_2}$
\item $(3^1 , 1^9)$ from $U_2(3,3)$
\item  $(8^4 , 5^4 , 2^2)$  from $S(2,2)^I \circ 2K_2$
\item   $(8^4 , 7^1 , 6^2 , 3^3)$ from $\overline{S_3(1,2,1)}  \circ S_1 \circ S_1$   
\item $(7^4 , 6^1 , 5^1 , 3^3 , 0^1)$ from $S_1 \circ \overline{S_3(1,2,1)}  \circ S_1$
\item $(9^1 , 7^1 , 6^1 , 4^5 , 1^2)$ from $K_1 \circ S_1 \circ S_1 \circ K_1 \circ U_3(1)$ 
\end{itemize}

Drawings and adjacency lists of the above graphs can be obtained using the  Havel-Hakimi algorithm \cite{Havel1955, Hakimi1962}.    
  \newpage
  
  \vspace*{0.5in}
  
   \begin{table}[h]
 \begin{center}
\small
\label{UnsplittableTable}
 \begin{tabular}{|>{\centering\arraybackslash}m{1.5in}|>{\centering\arraybackslash}m{2.5in}|>{\centering\arraybackslash}m{0.5in}|>{\centering\arraybackslash}m{0.5in}| }
 \hline
{Indecomposable Non-split Unigraphs $G$} & Illustration and Degree Sequence  & $\omega(G)$ & $\alpha(G)$  \\
\hline
$C_5$ & 
%\begin{center}
\vspace{1em}
\begin{tikzpicture}
    % Define vertices
    \node[circle,draw,fill=lightgray] (v1) at (90:1) {};    % Top
    \node[circle,draw,fill=lightgray] (v2) at (162:1) {};   % Top left
    \node[circle,draw,fill=lightgray] (v3) at (234:1) {};   % Bottom left
    \node[circle,draw,fill=lightgray] (v4) at (306:1) {};   % Bottom right
    \node[circle,draw,fill=lightgray] (v5) at (18:1) {};  
    % Draw edges
    \draw[-] (v1) -- (v2);
    \draw[-] (v2) -- (v3);
    \draw[-] (v3) -- (v4);
    \draw[-] (v4) -- (v5);
    \draw[-] (v5) -- (v1);
\end{tikzpicture}

$(2^5)$  
%\end{center}
&  $2$ & $2$ \\
\hline
$mK_2$ with $m \ge 2$ &

\vspace{1em}
\begin{tikzpicture}
    % First K2
    \node[circle,draw,fill=lightgray] (a1) at (0,1) {};
    \node[circle,draw,fill=lightgray] (b1) at (0,0) {};
    \draw[-] (a1) -- (b1);
    
    % Second K2
    \node[circle,draw,fill=lightgray] (a2) at (0.75,1) {};
    \node[circle,draw,fill=lightgray] (b2) at (0.75,0) {};
    \draw[-] (a2) -- (b2);
    
    \node at (1.5,0.5) {$\cdots$};
    % Third K2
    \node[circle,draw,fill=lightgray] (a3) at (2.25,1) {};
    \node[circle,draw,fill=lightgray] (b3) at (2.25,0) {};
    \draw[-] (a3) -- (b3);
    
    % Curly brace underneath
    \draw[-][decorate,decoration={brace,mirror,amplitude=10pt}]
        ([yshift=-5pt]b1.south west) -- ([yshift=-5pt]b3.south east)
        node[midway,yshift=-15pt]{$m$};
\end{tikzpicture}  

 $(1^{2m})$   & 2 & $m$\\
\hline
$U_2(m,\ell) = mK_2 \cup K_{1,\ell}$   with $m \ge 1, \ell \ge 2$
& 
\vspace{1em}
\begin{tikzpicture}
    % First K2
    \node[circle,draw,fill=lightgray] (a1) at (0,1) {};
    \node[circle,draw,fill=lightgray] (b1) at (0,0) {};
    \draw[-] (a1) -- (b1);
    % Second K2
    \node[circle,draw,fill=lightgray] (a2) at (0.75,1) {};
    \node[circle,draw,fill=lightgray] (b2) at (0.75,0) {};
    \draw[-] (a2) -- (b2);
    % middle dots
    \node at (1.4,0.5) {$\cdots$};
    % Third K2
    \node[circle,draw,fill=lightgray] (a3) at (2.0,1) {};
    \node[circle,draw,fill=lightgray] (b3) at (2.0,0) {};
    \draw[-] (a3) -- (b3);   
    % Curly brace underneath
    \draw[-][decorate,decoration={brace,mirror,amplitude=10pt}]
        ([yshift=-5pt]b1.south west) -- ([yshift=-5pt]b3.south east)
        node[midway,yshift=-15pt]{$m$};
    %   K1\ell
      \node[circle,draw,fill=lightgray] (a4) at (4,1) {};
      \node[circle,draw,fill=lightgray] (d1) at (3,0) {};
      \draw[-] (a4) -- (d1);
       \node[circle,draw,fill=lightgray] (d2) at (3.5,0) {};
       \draw[-] (a4) -- (d2);
       \node[circle,draw,fill=lightgray] (d3) at (4.5,0) {};
       \draw[-] (a4) -- (d3);
       \node[circle,draw,fill=lightgray] (d4) at (5.0,0) {};
       \draw[-] (a4) -- (d4);
       \node at (4.05,0) {$\cdots$};
      \draw[-][decorate,decoration={brace,mirror,amplitude=10pt}]
        ([yshift=-5pt]d1.south west) -- ([yshift=-5pt]d4.south east)
        node[midway,yshift=-15pt]{$\ell$};
\end{tikzpicture}

   $(\ell, 1^{2m+\ell})$  & 2 & $m + \ell$ \\
\hline
$U_3(m)$, $m \ge 1$ &  
\vspace{1em}
\begin{tikzpicture}[scale=1]
    % Center vertex
 %   \node[circle,draw,fill=lightgray] (v) at (0,0) {$v$};

    % 4-cycle on the left
    \node[circle,draw,fill= lightgray] (a) at (-2,1) {};
    \node[circle,draw,fill=lightgray] (b) at (-3,0) {};
    \node[circle,draw,fill=lightgray] (c) at (-2,-1) {};
    \node[circle,draw,fill=lightgray] (d) at (-1,0) {};
    
    % Edges of the 4-cycle
    \draw[-] (a) -- (b) -- (c) -- (d) -- (a);

    % Connect v to the 4-cycle
 %   \draw[-] (v) -- (a);
  %  \draw[-] (v) -- (c);

    % Triangles on the right
    \node[circle,draw,fill=lightgray] (t1) at (-0.8,1.2) {};
    \node[circle,draw,fill=lightgray] (t2) at (0,0.7) {};
    \node[circle,draw,fill=lightgray] (t3) at (0,-0.7) {};
    \node[circle,draw,fill=lightgray] (t4) at (-0.8,-1.2) {};

    \node at (0.2,0.3) {$\vdots$};
    \node at (0.2,-0.2) {$\vdots$};
     % Triangles (each triangle: v, ti, ti+1)
    \draw[-] (d) -- (t1) -- (t2) -- (d);
    \draw[-] (d) -- (t3) -- (t4) -- (d);

    % Curly brace indicating many triangles
    \draw[-][decorate,decoration={brace,amplitude=10pt}]
        ([xshift=30pt]t1.east) -- ([xshift=30pt]t4.east)
        node[midway,xshift=18pt] {$m$};
\end{tikzpicture}

 $(2m+2, 2^{2m+3})$ & 3 & $m+2$ \\
\hline
\end{tabular} 
 \caption{The four types of indecomposable non-split unigraphs mentioned in Theorem \ref{Tyshkevich-unsplittable},  their degree sequences, clique and independence numbers. }
 \label{UnsplittableTable}
 \end{center}
 \end{table}

 \newpage

\begin{table}[H]
\begin{center}
 \small
%\label{Splittable}
 \begin{tabular}{|>{\centering\arraybackslash}m{2.25in}|>{\centering\arraybackslash}m{3.5in}|} 
 %>{\centering\arraybackslash}m{0.5in}| >{\centering\arraybackslash}m{0.5in}|}%>{\centering\arraybackslash}m{1in}|}
 \hline
{Indecomposable Split Unigraphs $G$} & Illustration and  Paired Degree Sequence  \\
% & $\omega(G)$ & $\alpha(G)$    \\
 \hline
Single-vertex graph %\hspace*{.5in}
%Type $K_1$ or $S_1$
& 
\vspace*{1em}
\begin{tikzpicture}
    \node[circle,draw,fill=lightgray] (a1) at (0,1) {};
\end{tikzpicture} 

 $(0; \emptyset)$ or $(\emptyset; 0)$  \\ %& $1$  & $1$\\
\hline
$S(p,q)$, $p \ge 1$, $q \ge 2$ & 
\vspace*{1em}

\begin{tikzpicture}  
 \node[circle,draw,fill=lightgray] (a4) at (4,1) {};
    %  \node[circle,draw,fill=lightgray] (d1) at (3,0) {};
     % \draw[-] (a4) -- (d1);
       \node[circle,draw,fill=lightgray] (d2) at (3.25,0) {};
       \draw[-] (a4) -- (d2);
       \node[circle,draw,fill=lightgray] (d3) at (4.25,0) {};
       \draw[-] (a4) -- (d3);
       \node[circle,draw,fill=lightgray] (d4) at (4.75,0) {};
       \draw[-] (a4) -- (d4);
       \node at (3.8,0) {$\cdots$};
      \draw[-][decorate,decoration={brace,mirror,amplitude=10pt}]
        ([yshift=-5pt]d2.south west) -- ([yshift=-5pt]d4.south east)
        node[midway,yshift=-15pt]{$p$};

 \node[circle,draw,fill=lightgray] (b4) at (6,1) {};
    %  \node[circle,draw,fill=lightgray] (d1) at (3,0) {};
     % \draw[-] (a4) -- (d1);
       \node[circle,draw,fill=lightgray] (e2) at (5.25,0) {};
       \draw[-] (b4) -- (e2);
       \node[circle,draw,fill=lightgray] (e3) at (6.25,0) {};
       \draw[-] (b4) -- (e3);
       \node[circle,draw,fill=lightgray] (e4) at (6.75,0) {};
       \draw[-] (b4) -- (e4);
       \node at (5.8,0) {$\cdots$};
      \draw[-][decorate,decoration={brace,mirror,amplitude=10pt}]
        ([yshift=-5pt]e2.south west) -- ([yshift=-5pt]e4.south east)
        node[midway,yshift=-15pt]{$p$};
        
  \draw[-] (a4) -- (b4);    
    \node at (7.25,0.5) {$\cdots$};   
    \node at (7.75, 0.5)  {$\cdots$};  
    
     \node[circle,draw,fill=lightgray] (c4) at (9,1) {};
    %  \node[circle,draw,fill=lightgray] (d1) at (3,0) {};
     % \draw[-] (a4) -- (d1);
       \node[circle,draw,fill=lightgray] (f2) at (8.25,0) {};
       \draw[-] (c4) -- (f2);
       \node[circle,draw,fill=lightgray] (f3) at (9.25,0) {};
       \draw[-] (c4) -- (f3);
       \node[circle,draw,fill=lightgray] (f4) at (9.75,0) {};
       \draw[-] (c4) -- (f4);
       \node at (8.8,0) {$\cdots$};
      \draw[-][decorate,decoration={brace,mirror,amplitude=10pt}]
        ([yshift=-5pt]f2.south west) -- ([yshift=-5pt]f4.south east)
        node[midway,yshift=-15pt]{$p$};
 %  \draw[dashed] (b4) -- (c4);
   \draw[-] (a4) to[out=25,in=155] (c4);
     \draw[-] (b4) to[out=15,in=165] (c4);
      \draw[-][decorate,decoration={brace, mirror, amplitude=10pt}]
        ([yshift=-25pt]d2.south west) -- ([yshift=-25pt]f4.south east)
node[midway,yshift=-16pt]{$q$};
\end{tikzpicture}
% $(q^q; 1^q)$ when $p=1$;  
%\vspace*{1in}

$((p+ q-1)^q; 1^{pq})$  \\% & $q$   & $pq$   \\

\hline
$S_2(p_1, q_1, \hdots, p_m, q_m)$,  \hspace*{0.03in} $m \ge 2$,  $p_1 > p_2 > \hdots > p_m \ge 1$ and  $q_i \ge 1$ for $i = 1, \hdots, m$
%\vspace*{0.1in}
%There are $q_i$ copies of $K_{1, p_i}$ for $i = 1, \hdots, m$ and the centers of all the stars form a complete graph.  
& 
\vspace{1em}
\begin{tikzpicture}
\fill[gray!30] 
        (0,0) -- (2,0) -- (1.5,1.5) -- (0.5,1.5) -- cycle;
    % Now draw the edges on top
    \draw[-] (0,0) -- (2,0) -- (1.5,1.5) -- (0.5,1.5) -- cycle;
     \node at (1,-0.5) {$S(p_1, q_1)$};
\fill[gray!30] 
        (2.25,0) -- (4.25,0) -- (3.75,1.5) -- (2.75,1.5) -- cycle;
    % Now draw the edges on top
 \draw[-] (2.25,0) -- (4.25,0) -- (3.75,1.5) -- (2.75,1.5) -- cycle;
 \node at (3.25,-0.5) {$S(p_2, q_2)$};   
 \node at (4.65,0.75) {$\cdots$};
  \node at (5.2,0.75) {$\cdots$};   
 \fill[gray!30] 
        (5.5,0) -- (7.5,0) -- (7.0,1.5) -- (6.0,1.5) -- cycle;
    % Now draw the edges on top
 \draw[-] (5.5,0) -- (7.5,0) -- (7.0,1.5) -- (6.0,1.5) -- cycle;
 \node at (6.5,-0.5) {$S(p_m, q_m)$};  
 
 \node (z1) at (1.0, 1.5) {};
 \node (z2) at (3.25, 1.5){};
 \node (z3) at (6.5, 1.5){};
 \draw[-] (z1) to[out=25,in=155] (z3);
 \draw[-] (z1) to[out=15,in=165] (z2);
  \draw[-] (z2) to[out=15,in=165] (z3);
 \end{tikzpicture}
     $((p_1+ N -1)^{q_1}, \cdots, $ $(p_m + N-1)^{q_m};$ $1^{p_1q_1 + \cdots + p_m q_m})$ where $N = \sum_{i=1}^m q_i$  \\ %& $\sum_{i=1}^m q_i$  & $\sum_{i=1}^m p_i q_i$ \\
  %    $\sum_{i=1}^m Fix(S(p_i, q_i))$ \\
%\min \{c: c\binom{c}{p_i} \ge q_i,$ $i = 1, \hdots, m \}$ \\
\hline
$S_3(p, q_1, q_2)$ with $p \ge 1, q_1 \ge 2, q_2 \ge 1$
%the union of $m$ copies of $K_2$ and the star $K_{1, \ell}$ 
&  \vspace{1em}
\begin{tikzpicture}
\fill[gray!30] 
        (0,0) -- (2,0) -- (1.5,1.5) -- (0.5,1.5) -- cycle;
    % Now draw the edges on top
    \draw[-] (0,0) -- (2,0) -- (1.5,1.5) -- (0.5,1.5) -- cycle;
     \node at (1,-0.5) {$S(p, q_1)$};
 \node[circle,draw,fill=lightgray]  (e) at (2.5,0) {$e$};
 \node (z1) at (0.6, 1.5) {};
 \node (z2) at (0.9, 1.5){};
 \node (z3) at (1.2, 1.5){};
 \draw[-] (e) -- (z1);
  \draw[-] (e) -- (z2);
  \draw[-] (e) -- (z3);
 \fill[gray!30] 
        (3,0) -- (5,0) -- (4.5,1.5) -- (3.5,1.5) -- cycle;
    % Now draw the edges on top
    \draw[-] (3,0) -- (5,0) -- (4.5,1.5) -- (3.5,1.5) -- cycle;
     \node at (4,-0.5) {$S(p+1, q_2)$};
   \node (z4) at (4, 1.5){};
   \draw[-] (z2) to[out=25,in=155] (z4);
\end{tikzpicture}

$((p + q_1 + q_2)^{q_1 + q_2}; q_1,$ $1^{pq_1 + (p+1)q_2})$ \\ %& $q_1 + q_2$   & $pq_1 + pq_2 + q_2 + 1$  \\
% $Fix(S(p, q_1)) +$  \hspace*{2em}$Fix(S(p+1, q_2))$  \\
\hline
$S_4(p,q)$ with $p \ge 1$, $q \ge 1$ & 
\vspace{1em}
\begin{tikzpicture}
\fill[gray!30] 
        (0,0) -- (2,0) -- (1.5,1.5) -- (0.5,1.5) -- cycle;
    % Now draw the edges on top
    \draw[-] (0,0) -- (2,0) -- (1.5,1.5) -- (0.5,1.5) -- cycle;
     \node at (1,-0.5) {$S(p,2)$};
 \node[circle,draw,fill=lightgray]  (e) at (2.5,0) {$e$};
  \node[circle,draw,fill=lightgray]  (f) at (2.5,1.5) {$f$};
 \node (z1) at (0.6, 1.4) {};
 \node (z2) at (0.9, 1.4){};
 \node (z3) at (1.2, 1.4){};
 \node (z4) at (0.3,0){};
 \node (z5) at (0.6,0){};
 \node (z6) at (0.9,0){};
 \node (z7) at (1.2,0){};
 \draw[-] (e) -- (z1);
  \draw[-] (e) -- (z2);
%  \draw[-] (e) -- (z3);
   \draw[-] (z1) to[out=25,in=155]   (f); 
  \draw[-] (z2) to[out=20,in=160]  (f);  
  \draw[-] (z3) to[out=15,in=165]  (f); 
   \draw[-] (z4) -- (f);  
  \draw[-] (z5) -- (f); 
   \draw[-] (z6) -- (f);  
  \draw[-] (z7) -- (f); 
  
 \fill[gray!30] 
        (3,0) -- (5,0) -- (4.5,1.5) -- (3.5,1.5) -- cycle;
    % Now draw the edges on top
    \draw[-] (3,0) -- (5,0) -- (4.5,1.5) -- (3.5,1.5) -- cycle;
     \node at (4,-0.5) {$S(p+1, q)$};
   \node (z4) at (4, 1.5){};
   \draw[-] (z2) to[out=35,in=145] (z4);
   \node (y1) at (3.9, 1.4) {};
 \node (y2) at (4.2, 1.4){};
 \node (y3) at (4.5, 1.4){};
 \node (y4) at (3.5,0){};
 \node (y5) at (3.8,0){};
 \node (y6) at (4.2,0){};
\node (y7) at (4.5,0){};
    \draw[-] (f) to[out=15,in=165]  (y1); 
  \draw[-] (f) to[out=20,in=160]  (y2);  
  \draw[-] (f) to[out=25,in=155]  (y3); 
    \draw[-] (y4) -- (f); 
  \draw[-] (y5) -- (f);  
  \draw[-] (y6) -- (f); 
   \draw[-] (y7) -- (f); 
\end{tikzpicture}

$(2(p+q+1)+qp, (p+q+3)^{q+2}; 2^{qp + 2p + q + 1})$  \\ %& $q + 3$  & $qp + 2p + q + 1$ \\
% $Fix(S(p,2)) +$ \hspace*{2em}$Fix(S(p+1,q))$  \\
%There is a vertex $v$ so $v$ is one of the vertices of a chordless $C_4$.  It is also one of the vertices of $m$ copies of $C_3$. 
\hline
\end{tabular} 
 \caption{The five types of indecomposable non-split graphs  mentioned in Theorem \ref{Tyshkevich-splittable} and their degree sequences. The $K$-parts of the split graphs are on top while the $S$-parts are at the bottom.}
 \label{SplittableTable}
 \end{center}
 \end{table}

\newpage

\section{Applications, Part 1}
\label{sec4}

We now  show  how Tyshkevich's work can be used to address algorithmic questions. It is well-known that given a graph $G$, computing the size of its largest clique $\omega(G)$, the size of its largest stable set $\alpha(G)$, the size of its smallest vertex cover $\beta(G)$ as well as its chromatic number $\chi(G)$ are NP-hard problems.  When $G$ is decomposable, can a decomposition simplify the computation of these parameters?   We begin with a nice result from Whitman \cite{Whitman24}.

\begin{lemma} {\bf \cite{Whitman24}}
Let $G= (G_k , A_k, B_k) \circ (G_{k-1}, A_{k-1}, B_{k-1})  \circ \hdots \circ (G_1, A_1, B_1) \circ G_0$. Then  $\omega(G) = |A_k| + |A_{k-1}|+ \hdots + |A_1| + \omega(G_0)$ and $\chi(G) = |A_k| + |A_{k-1}|+ \hdots + |A_1| + \chi(G_0)$.
\label{lemmaWhitman}
\end{lemma}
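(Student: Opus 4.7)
The plan is to reduce the lemma to a single composition step and then iterate. By the associativity of $\circ$ noted in Section~\ref{sec2}, I can write $G=(G_k,A_k,B_k)\circ H$ where $H=(G_{k-1},A_{k-1},B_{k-1})\circ\cdots\circ G_0$. If I can establish the two one-step identities
\[
\omega\bigl((G',A,B)\circ H\bigr)=|A|+\omega(H),\qquad \chi\bigl((G',A,B)\circ H\bigr)=|A|+\chi(H)
\]
for every split graph $(G',A,B)$ and every nonempty graph $H$, an induction on $k$ (with trivial base $k=0$, where $G=G_0$ and both telescoping sums are empty) immediately gives the lemma.

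For the clique identity I would rely on the three structural features of $(G',A,B)\circ H$ recorded in Section~\ref{sec2}: $A$ induces a clique, every vertex of $A$ is adjacent to every vertex of $H$, and $B$ is an independent set with no edges to $V(H)$. Taking $A$ together with any maximum clique of $H$ therefore produces a clique of $G$ of size $|A|+\omega(H)$, which is the lower bound. For the matching upper bound I pick a maximum clique $C$ of $G$ and split on $|C\cap B|\in\{0,1\}$: if $C\cap B=\emptyset$ then $C\subseteq A\cup V(H)$ and $|C\cap V(H)|\le\omega(H)$, so $|C|\le|A|+\omega(H)$; if $C\cap B=\{b\}$ then every other vertex of $C$ must be a $G$-neighbor of $b$ and hence lies in $A$, giving $|C|\le|A|+1\le|A|+\omega(H)$ since $H$ is nonempty.

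For the chromatic identity the subgraph induced by $A\cup V(H)$ supplies the lower bound: any proper coloring of $G$ must use $|A|$ pairwise distinct colors on $A$, must use at least $\chi(H)$ colors on $V(H)$, and must keep these two palettes disjoint because of the cross-edges, forcing $\chi(G)\ge|A|+\chi(H)$. The upper bound is witnessed by an explicit coloring: give $A$ a fresh palette of size $|A|$, color $V(H)$ optimally with a disjoint palette of size $\chi(H)$, and color each $b\in B$ with an arbitrary color drawn from $V(H)$'s palette. This is a proper coloring because $B$ is independent, $B$ has no edges to $V(H)$, and the edges from $b$ to $A$ are safe since the $A$-colors are disjoint from the $V(H)$-palette.

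The only delicate point, and the sole obstacle I foresee, is ensuring that the $B$-vertices always have some palette to reuse, which requires $\chi(H)\ge 1$; this is automatic as long as $H$ has at least one vertex, and holds at every level of the iteration because each intermediate composition is a nonempty graph. Once the single-step identities are in place, iterating them along the canonical decomposition yields the telescoping sums $|A_k|+\cdots+|A_1|+\omega(G_0)$ and $|A_k|+\cdots+|A_1|+\chi(G_0)$ with no further work.
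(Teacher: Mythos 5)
Your proposal is correct. Note, however, that the paper does not actually prove this lemma---it is imported from Whitman \cite{Whitman24} without proof---so the natural comparison is with the paper's proof of the companion result, Lemma \ref{lemmaWhitman2} for $\alpha$ and $\beta$. There your argument and the paper's are structurally dual: both peel off the outermost factor via associativity, induct on the number of components, and settle the one-step identity by a two-case analysis on how an optimal vertex set meets the ``wrong'' side of the split (the paper bounds a stable set $D$ by cases $|D\cap A_k|\in\{0,1\}$, using $|B_H|\ge 1$ from the nonemptiness of $H$; you bound a clique $C$ by cases $|C\cap B|\in\{0,1\}$, using $\omega(H)\ge 1$ for the same reason). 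Your chromatic-number argument is genuinely additional content: the paper never proves $\chi(G)=\sum_i|A_i|+\chi(G_0)$ in general, and in Section \ref{sec4} it sidesteps the issue for unigraphs by invoking perfection ($\chi(G)=\omega(G)$ unless $G_0\cong C_5$). Your explicit lower bound (the $|A|$-palette must be disjoint from the $V(H)$-palette because of the join) and the reuse of the $V(H)$-palette on the independent set $B$ give a clean, self-contained proof valid for arbitrary graphs, not just unigraphs, which is what the lemma as stated requires. The one hypothesis you correctly flag---that each intermediate $H$ is nonempty---matches the convention used throughout the paper ($G_0$ always has at least one vertex), so there is no gap.
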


A similar result holds for $\alpha(G)$ and $\beta(G)$.

\begin{lemma}
Let $G= (G_k , A_k, B_k) \circ (G_{k-1}, A_{k-1}, B_{k-1})  \circ \hdots \circ (G_1, A_1, B_1) \circ G_0$. Then it is the case that $\alpha(G) = |B_k| + |B_{k-1}|+ \hdots + |B_1| + \alpha(G_0)$ and $\beta(G) = |A_k| + |A_{k-1}|+ \hdots + |A_1| + \beta(G_0)$.
\label{lemmaWhitman2}
\end{lemma}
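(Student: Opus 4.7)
The plan is to derive the $\alpha(G)$ formula by applying Whitman's lemma (Lemma~\ref{lemmaWhitman}) to the complement $\overline{G}$, using the standard identity $\alpha(G) = \omega(\overline{G})$. The $\beta(G)$ formula then follows at once from the duality $\alpha(G) + \beta(G) = |V(G)|$ between maximum independent sets and minimum vertex covers.

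The first step is to check how complementation interacts with Tyshkevich's composition. Starting from the fact (noted in Section~\ref{sec3}) that $\overline{(G', A, B)} = (\overline{G'}, B, A)$, I would verify by a short adjacency check that for any graph $H$,
\[
\overline{(G', A, B) \circ H} \;=\; (\overline{G'}, B, A) \circ \overline{H}.
\]
Indeed, edges between $A$ and $V(H)$ are all present in the composition and hence all absent in its complement, matching the fact that $A$ is the new $S$-part in $(\overline{G'}, B, A) \circ \overline{H}$; similarly, the non-edges between $B$ and $V(H)$ in the composition become all edges in the complement, matching that $B$ is the new $K$-part. The remaining adjacency cases (within $A$, within $B$, within $V(H)$, and between $A$ and $B$) are handled the same way. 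Applying this identity $k$ times gives
\[
\overline{G} = (\overline{G_k}, B_k, A_k) \circ (\overline{G_{k-1}}, B_{k-1}, A_{k-1}) \circ \hdots \circ (\overline{G_1}, B_1, A_1) \circ \overline{G_0},
\]
so in $\overline{G}$ the $K$-parts of the split components are precisely the old $B_i$'s.

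Feeding this expression into Lemma~\ref{lemmaWhitman} then yields
\[
\alpha(G) = \omega(\overline{G}) = |B_k| + |B_{k-1}| + \hdots + |B_1| + \omega(\overline{G_0}) = |B_k| + \hdots + |B_1| + \alpha(G_0),
\]
which is the first claim. For the second, the composition operation takes disjoint unions of vertex sets, so $|V(G)| = \sum_{i=1}^k (|A_i| + |B_i|) + |V(G_0)|$. Combining this with $\beta(G) = |V(G)| - \alpha(G)$ and the $\alpha$ formula just derived, the $|B_i|$ terms cancel and we are left with $\beta(G) = |A_k| + \hdots + |A_1| + \beta(G_0)$. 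The only nontrivial step is the complement-of-a-composition identity; everything else is routine substitution, and I expect the adjacency bookkeeping there to be the main (though minor) obstacle.
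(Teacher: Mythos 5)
Your proof is correct, but it takes a genuinely different route from the paper's. The paper proves the $\alpha$ formula directly by induction on the number of components: writing $G = (G_k, A_k, B_k) \circ H$, it observes that $B_k \cup B_H$ is a stable set (where $B_H$ is a maximum stable set of $H$) and then shows no stable set can be larger via a case analysis on whether the stable set meets $A_k$ (if it does, it contains one vertex of $A_k$ and must avoid $V(H)$ entirely, using $|B_H| \ge 1$). You instead reduce the $\alpha$ formula to Lemma~\ref{lemmaWhitman} by complementation, via the identity $\overline{(G', A, B) \circ H} = (\overline{G'}, B, A) \circ \overline{H}$, which is indeed a straightforward adjacency check (and $(\overline{G'}, B, A)$ is a legitimate $KS$-partitioned split graph, so Whitman's lemma applies to the resulting decomposition of $\overline{G}$ --- the paper explicitly notes that these lemmas hold for any decomposition, not just the canonical one). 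Your route is shorter and exploits duality cleanly, at the cost of leaning on Whitman's $\omega$ result as a black box plus the complement--composition identity; the paper's induction is self-contained and exhibits an explicit maximum stable set, which is mildly more informative. Both proofs obtain the $\beta$ formula identically, from $\alpha(G) + \beta(G) = |V(G)|$ and the additivity of vertex counts.
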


\begin{proof}
We prove the formula for $\alpha(G)$ first using induction on the number of components of $G$.   When $G$ has only one component, $G = G_0$ so $\alpha(G) = \alpha(G_0)$.   Assume that the result holds for graphs that have $k$ or fewer components.  Let $G$ be a graph with $k+1$ components so $G= (G_k , A_k, B_k) \circ (G_{k-1}, A_{k-1}, B_{k-1})  \circ \hdots \circ (G_1, A_1, B_1) \circ G_0$. Let $H = (G_{k-1}, A_{k-1}, B_{k-1})  \circ \hdots \circ (G_1, A_1, B_1) \circ G_0$ so $G = (G_k, A_k, B_k) \circ H$.   Denote as $B_H$   a maximum-sized stable set in $H$.  Clearly, $B_k \cup B_H$ is a stable set of $G$ so $\alpha(G) \ge |B_k| + |B_H|$.  We will now show that $\alpha(G) \leq |B_k| + |B_H|$.

 Let $D$ be a stable set of $G$,  $D_k = D \cap V(G_k)$ and $D_H = D \cap V(H)$.  Notice that both $D_k$ and $D_H$ must be stable sets of $G_k$ and $H$ respectively.  Furthermore,  $D_k$ can contain at most one vertex in $A_k$ since the vertices in $A_k$ induce a clique.  Let us consider two cases:  (i) $|D_k \cap A_k| = 0$ and (ii) $|D_k \cap A_k| = 1$.  For case (i),  $|D| = |D_k| + |D_H| \leq |B_k| + |B_H|$.    For case (ii), since $D_k \cap A_k \neq \emptyset$,  $D_H$ has to be empty because  every vertex in $A_k$ is adjacent to every vertex of $H$ in $G$.  Thus,  $|D| = |D_k|  \leq |B_k| + 1  \leq |B_k| + |B_H|$.  The latter inequality holds because $H$ is a non-empty graph so $|B_H| \ge 1$.  We have now shown that for cases (i) and (ii), every stable set of $G$ has size at most $|B_k| + |B_H|$.  It follows that $\alpha(G) = |B_k| + |B_H|$.  But by the induction hypothesis, $|B_H| = |B_{k-1}|+ \hdots + |B_1| + \alpha(G_0)$ so $\alpha(G) = |B_k| + |B_{k-1}|+ \hdots + |B_1| + \alpha(G_0)$. 
 
 Now, $\alpha(G) + \beta(G) = |V(G)|$ and $|V(G)| = |V(G_k)| + |V(G_{k-1})| + \hdots +  |V(G_0)|$.   Thus, 
 \begin{eqnarray*}
 \beta(G) & = &  |V(G)| - \alpha(G) \\
                & = & (|V(G_k)| + |V(G_{k-1})| + \hdots + |V(G_1)| + |V(G_0)|) - (|B_k| + |B_{k-1}|+ \hdots + |B_1| + \alpha(G_0)) \\
                & = & (|V(G_k)| - |B_k|) + (|V(G_{k-1})| - |B_{k-1}|) +  \hdots + (|V(G_1)| - |B_1|) + (|V(G_0)| - \alpha(G_0)) \\
                & = &  |A_k| + |A_{k-1}| + \hdots + |A_1| + \beta(G_0)
 \end{eqnarray*}
 where the last equality uses the fact that $\alpha(G_0) + \beta(G_0) = |V(G_0)|$.
\end{proof}

We note that the lemmas above do not require that $G$ be expressed in terms of its canonical decomposition.  Some decomposition of $G$ will do.   We also note that it is almost the case that $\omega(G) = \sum_{i=0}^k \omega (G_i)$ or $\alpha(G) = \sum_{i=0}^k \alpha(G_i)$, etc.   Complications arise when the split graph components of $G$ are not balanced.  It may be the case that $\omega(G_i) = |A_i|+1$ or $\alpha(G_i) = |B_i| + 1$ so we cannot just replace $|A_i|$ with $\omega(G_i)$  nor $|B_i|$ with $\alpha(B_i)$ in the formulas of $\omega(G)$ and $\alpha(G)$ respectively.

%\smallskip
%\begin{center}
%{*****}
%\end{center}
%\smallskip

\medskip	
\bigskip % Optional: Adds a bit of extra vertical space before the line
\centerline{$*  *  *  * *$}
\bigskip % Optional: Adds a bit of extra vertical space after the line
\medskip

%We also want to emphasize that Theorem \ref{thmclique} applies to {\it all} graphs.  
%What makes the computation of $\omega(F)$ and $\alpha(F)$ hard?  

To implement Lemmas \ref{lemmaWhitman} and \ref{lemmaWhitman2}, however, we shall decompose $G$ into its indecomposable components using {\tt Decompose($G$)}.  
It will output the paired degree sequence of each $(G_i, A_i, B_i)$, $i \ge 1$ in linear time.  Thus,   $|A_i|$ and $|B_i|$  for $i \ge 1$ can be obtained in linear time too.  This means that we just need $\omega(G_0)$, $\chi(G_0)$, $\alpha(G_0)$ and $\beta(G_0)$ to compute $\omega(G), \chi(G), \alpha(G)$ and $\beta(G)$ respectively.   In the case of unigraphs,  Tyshkevich has done the hard work of identifying all possible candidates for $G_0$.  

We shall make use of the next proposition in our proofs. 

  \begin{proposition} 
 When $G_i$ is an indecomposable split graph with two or more vertices, $G_i$ is balanced. That is, $G_i$ has a unique $KS$-partition $(A_i,B_i)$ so that $\omega(G_i) = |A_i|$ and $\alpha(G_i) = |B_i|$. 
 \label{indecomsplit}
  \end{proposition}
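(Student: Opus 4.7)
The proposition has two claims about $G_i$ (indecomposable split with at least two vertices): that it is balanced, i.e. admits a $KS$-partition $(A_i,B_i)$ with $|A_i|=\omega(G_i)$ and $|B_i|=\alpha(G_i)$, and that this partition is unique. The uniqueness half follows immediately from Proposition~\ref{uniqueKSprop} once $G_i$ is known to be balanced, so the real content is showing that $G_i$ is balanced. My plan is to prove the contrapositive: every \emph{unbalanced} split graph $G$ with $|V(G)|\ge 2$ is decomposable.

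Given such a $G$, I would invoke the Hammer and Simeone theorem \cite{HaSi81} on any $K$-max partition $(A,B)$ of $G$ to obtain a swing vertex $u\in A$ with $\{u\}\cup B$ stable. Then $u$ has no neighbor in $B$, and because $A$ is a clique, $N_G(u)=A\setminus\{u\}$. I would now set $A'=A\setminus\{u\}$, $B'=B$, let $G'$ be the subgraph of $G$ induced by $A'\cup B'$, and let $H$ be the single-vertex graph on $\{u\}$.

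The key claim is then $G=(G',A',B')\circ H$. The triple $(G',A',B')$ is a split graph because $A'\subseteq A$ is a clique and $B'=B$ is a stable set. Writing out the edges of the composition gives $E(G')\cup E(H)\cup\{xu:x\in A'\}$: the first matches the edges of $G$ not incident to $u$ (by definition of the induced subgraph $G'$), the second is empty, and the third equals the edges of $G$ incident to $u$ (by the swing property, $N_G(u)=A'$). Hence the two graphs have the same vertex set $V(G)$ and the same edge set. Since $|V(H)|=1$ and $|V(G')|=|V(G)|-1\ge 1$, the decomposition is non-trivial, so $G$ is decomposable in Tyshkevich's sense, and taking the contrapositive plus Proposition~\ref{uniqueKSprop} finishes the proof.

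I do not expect any genuinely hard step: the conceptual content is the observation that a Hammer-Simeone swing vertex can always be peeled off as a single-vertex $H$-component of a composition. The one spot requiring a brief sanity check is the degenerate case $|A|=1$, where $A'=\emptyset$ and $G$ is a stable set; there the composition $(G',\emptyset,B')\circ\{u\}$ adds no $A'$--$H$ edges, and $G$ is still correctly reconstructed, so no separate argument is needed.
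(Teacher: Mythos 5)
Your argument is correct, and it is worth pointing out that the paper states Proposition \ref{indecomsplit} with no proof at all, so you are supplying a justification that the text simply omits. Your route --- prove the contrapositive by taking a $K$-max partition $(A,B)$ of an unbalanced split graph, noting that the swing vertex $u$ satisfies $N_G(u)=A\setminus\{u\}$ (adjacent to all of $A\setminus\{u\}$ because $A$ is a clique, to nothing in $B$ because $B\cup\{u\}$ is stable), and writing $G=(G-u,\,A\setminus\{u\},\,B)\circ\{u\}$ --- is the natural one, and your edge-set verification is exactly what is needed; the factorization is non-trivial since both sides are nonempty, matching the condition $0<p+q<N$ in Theorem \ref{mainthm1}. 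Two small points you use implicitly are both covered by the paper's Section \ref{sec2}: an unbalanced split graph really does have a $K$-max partition (if its partition is $S$-max, moving that swing vertex into the clique side yields one), and ``decomposable'' excludes only the trivial factorizations, which yours is not. Your construction is in the same spirit as the paper's Proposition \ref{decomposable} (peeling off a dominant or isolated vertex), but it is not a special case of it --- a swing vertex is in general neither dominant nor isolated --- and here the peeled vertex lands on the $H$-side of the composition rather than on the split side. The uniqueness and the identities $|A_i|=\omega(G_i)$, $|B_i|=\alpha(G_i)$ then follow from Proposition \ref{uniqueKSprop} exactly as you say. No gaps.
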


\begin{theorem}
When $G$ is a unigraph, the parameters $\omega(G), \chi(G)$, $\alpha(G)$ and $\beta(G)$ can all be computed in linear time.  
\end{theorem}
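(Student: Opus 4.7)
The plan is to combine the linear-time algorithm \texttt{IsUnigraph}$(G)$ with Lemmas~\ref{lemmaWhitman} and~\ref{lemmaWhitman2}. First, I would run \texttt{IsUnigraph}$(G)$ to obtain, in linear time, the type (and associated parameters) of each indecomposable component $G_0, G_1, \ldots, G_k$ in the canonical decomposition of $G$. From the paired degree sequences of the components $(G_i, A_i, B_i)$ with $i \geq 1$, the sizes $|A_i|$ and $|B_i|$ are immediate, and the running sums $\sum_{i \geq 1} |A_i|$ and $\sum_{i \geq 1} |B_i|$ accumulate in total time $O(|V(G)|)$.

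By Lemmas~\ref{lemmaWhitman} and~\ref{lemmaWhitman2}, the four parameters of $G$ reduce to the corresponding parameters of $G_0$ added to these running sums. It therefore suffices to compute $\omega(G_0), \alpha(G_0), \beta(G_0), \chi(G_0)$ in constant time from the type of $G_0$. I would split into two cases. If $G_0$ is an indecomposable split unigraph, Proposition~\ref{indecomsplit} gives $\omega(G_0) = |A_0|$ and $\alpha(G_0) = |B_0|$; moreover $\chi(G_0) = \omega(G_0)$ because split graphs are perfect, and $\beta(G_0) = |V(G_0)| - \alpha(G_0)$. If $G_0$ is non-split, then by Theorem~\ref{Tyshkevich-unsplittable} it is one of $C_5$, $mK_2$, $U_2(m,\ell)$, $U_3(m)$, or the complement of one of these; for each such type, $\omega(G_0)$ and $\alpha(G_0)$ are either listed in Table~\ref{UnsplittableTable} or obtained by swapping $\omega \leftrightarrow \alpha$, and $\beta(G_0) = |V(G_0)| - \alpha(G_0)$ follows.

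The only step that requires a small additional case analysis is $\chi(G_0)$ when $G_0$ is non-split. Direct inspection gives $\chi(C_5) = 3$, $\chi(mK_2) = 2$, $\chi(U_2(m,\ell)) = 2$, and $\chi(U_3(m)) = 3$; for the complements, $\chi(\overline{mK_2}) = m$ (the cocktail-party graph), and $\chi(\overline{U_2(m,\ell)})$ and $\chi(\overline{U_3(m)})$ are read off from their explicit structure. Each evaluation is a constant-time lookup, so summing with the running $A_i$-totals delivers the four parameters in linear time overall. I expect the main obstacle to be verifying the chromatic-number formulas cleanly across the eight non-split families; once that table of constants is compiled, correctness of the full algorithm is an immediate consequence of the lemmas, and the linear running time is inherited from \texttt{IsUnigraph}.
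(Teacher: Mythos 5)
Your proposal is correct and follows the same overall skeleton as the paper's proof: decompose $G$, read $|A_i|$ and $|B_i|$ off the paired degree sequences, reduce everything to $G_0$ via Lemmas~\ref{lemmaWhitman} and~\ref{lemmaWhitman2}, handle split $G_0$ with Proposition~\ref{indecomsplit} and non-split $G_0$ with Table~\ref{UnsplittableTable} plus the $\omega(\overline{G}) = \alpha(G)$ swap, and get $\beta$ from $\beta(G) = |V(G)| - \alpha(G)$. The one place you diverge is the chromatic number, and it is exactly the step you flag as your main obstacle. You propose compiling $\chi(G_0)$ separately for each of the eight non-split families ($\chi(C_5)=3$, $\chi(mK_2)=2$, $\chi(U_2)=2$, $\chi(U_3)=3$, $\chi(\overline{mK_2})=m$, etc.), which does work but requires verifying each formula. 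The paper instead invokes Tyshkevich's Corollary 11: a unigraph $G$ is perfect if and only if $G_0$ is not a $5$-cycle. This collapses the entire case analysis into a single dichotomy: $\chi(G) = \omega(G)$ unless $G_0 \cong C_5$, in which case $\chi(G) = \omega(G) + 1$ by Lemma~\ref{lemmaWhitman} (since $\chi(C_5) = 3$ while $\omega(C_5) = 2$). This also subsumes your observation that split graphs are perfect. If you do not want to rely on that corollary, your per-family table is a legitimate substitute, but you should then actually verify the values for $\overline{U_2(m,\ell)}$ and $\overline{U_3(m)}$ rather than leaving them as "read off from their explicit structure"; the perfectness route lets you avoid that entirely. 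One minor omission on the split side: Proposition~\ref{indecomsplit} only applies when $G_0$ has at least two vertices, so the single-vertex case ($\omega = \alpha = 1$) needs to be noted separately, as the paper does.
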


\begin{proof}
Let the canonical decomposition of $G$ be $(G_k , A_k, B_k) \circ \hdots \circ (G_1, A_1, B_1) \circ G_0.$   As noted above, {\tt Decompose($G$)} will output the paired degree sequence of each $G_i$, $i \ge 1$,  so  $|A_i|$ and $|B_i|$ for $i = 1, \hdots, k$ can be obtained in linear time.   Let us determine $\omega(G_0)$ and $\alpha(G_0)$ first.

Assume $G_0$  is a split graph.  When $G_0$ is a single-vertex graph,  $\omega(G_0) = \alpha(G_0) = 1$.  When $G_0$ has two or more vertices,  according to Proposition \ref{indecomsplit}, $G_0$ has a unique $KS$-partition $(A_0, B_0)$ with $\omega(G_0) = |A_0|$ and $\alpha(G_0) = |B_0|$.   Now, {\tt Decompose($G$)} will also output the degree sequence of $G_0$.  Using {\tt DetermineSplit} \cite{Ch25}, an algorithm that can recognize if a graph is split based on a theorem by Hammer and Simeone \cite{HaSi81}, the paired degree sequence of $(G_0, A_0, B_0)$, and therefore $|A_0|$ and $|B_0|$,  can also be obtained in linear time.  %Hence,  when $G_0$ is a split graph, $\omega(G_0)$ and $\alpha(G_0)$ can be obtained in linear time.

%In \cite{Ch25}, 

%{\tt IsUnigraph($G$)} will output the paired degree sequence $(G_0, A_0, B_0)$ too so $|A_0|$ and $|B_0|$ can be determined with the other $|A_i|$'s and $|B_i|$'s.  

%Begin by computing the canonical decomposition of $F$, $$F = (G_k , A_k, B_k) \circ (G_{k-1}, A_{k-1}, B_{k-1})  \circ \hdots \circ (G_1, A_1, B_1) \circ G_0,$$  in linear time using ALG.  The algorithm outputs the paired degree sequence of each $G_i$, $i \ge 1$ so all $|A_i|$'s and $|B_i|$'s are determined in linear time too.  From Theorem \ref{thmclique},  we only need $\omega(G_0)$ and $\alpha(G_0)$ to compute $\omega(F)$ and $\alpha(F)$ respectively.
 
 %the only missing pieces to computing $\omega(F)$ and $\alpha(F)$ are $\omega(G_0)$ and $\alpha(G_0)$ respectively. 
 
% Now, the decomposition of $F$ reveals the type of $G_0$ too.  
 When $G_0$ is a non-split graph,  $G_0$  is isomorphic to one of the four types of graphs listed in Table \ref{UnsplittableTable} or their complements.  The clique and independence numbers of $C_5, mK_2, U_2(m, \ell)$ and $U_3(m)$ 
 are shown in the third and fourth columns of the table.  It is easy to verify that the values are correct. Additionally, the clique and independence numbers of their complements can be obtained from these values because $\omega(\overline{G}) = \alpha(G)$ while $\alpha(\overline{G}) = \omega(G)$.   Thus, to determine $\omega(G_0)$ and $\alpha(G_0)$, use {\tt IsUnigraph($G_0$)} to identify the type of $G_0$ and then use the information in Table \ref{UnsplittableTable} to obtain their corresponding clique and independence numbers.
 
 %Using {\tt IsUnigraph($G$)},  we can  identify the type of $G_0$.    $\omega(G_0)$ and $\alpha(G_0)$ can be determined by a simple lookup.   
 
 We have established that $\omega(G)$ and $\alpha(G)$ can be computed in linear time.  Since $\beta(G) = |V(G)| - \alpha(G)$, $\beta(G)$ can also be computed in linear time.  Finally,  Tyskevich proved in Corollary 11 of \cite{Ty00} that a unigraph $G$ is perfect if and only if $G_0$ is not a 5-cycle.   Recall that when a graph is perfect, its chromatic number is equal to its clique number.  Thus, when $G_0$ is not a 5-cycle, $\chi(G) = \omega(G)$.  When $G_0$ is a 5-cycle, $\chi(G) = \omega(G) +1$ according to Lemma \ref{lemmaWhitman} because $\chi(C_5) = 3$ but $\omega(C_5) = 2$.  Since $\omega(G)$ can be computed in linear time, so can $\chi(G)$. 
  \end{proof}

 % \smallskip
%\begin{center}
%{*****}
%\end{center}
%\smallskip

\section{Applications, Part 2}
\label{sec5}

Our initial interest in Tyshkevich's  decomposition theorem came about from our work on symmetry-breaking in graphs \cite{Ch06, ArChDe08, Ch09, Ch25}.  A symmetry of graph $G$ is an informal term for an {\it automorphism} of $G$, a bijection $\pi: V(G) \rightarrow V(G)$  that preserves the adjacencies of $G$.
%; that is, for any two vertices $u, v \in V(G)$, $uv$ is an edge of $G$ if and only if $\pi(u) \pi(v)$ is also an edge of $G$.   
For example,  $C_5$ has ten automorphisms:  five rotations and five reflections. We say an automorphism is {\it trivial} if it is the identity map; otherwise, it is {\it non-trivial}.  The set of automorphisms of $G$, $Aut(G)$,  form a group under the composition of functions operation so it is called the {\it automorphism group} of $G$.  

Suppose the canonical decomposition of $G$ is $(G_k , A_k, B_k)  \circ \hdots \circ (G_1, A_1, B_1) \circ G_0$.  We wanted to understand how an automorphism of $G$ behaves on the $G_i$'s.  
%indecomposable components of $G$. 
Might it be the case that $Aut(G)$ is isomorphic to $ Aut(G_k) \times Aut(G_{k-1}) \times \hdots \times Aut(G_1) \times Aut(G_0)?$
 That is, can every automorphism $\pi$ of $G$ be broken down to $k+1$ functions $(\pi_k, \pi_{k-1}, \hdots, \pi_1, \pi_0)$ so that each $\pi_i$ is an automorphism of $G_i$ and, conversely, the automorphisms of the $G_i$'s, $i = 0, \hdots, k$,  can be combined to create an automorphism of $G$?

 It turns out that the above conjecture is almost true.  It fails when $G$ has two or more consecutive indecomposable components that  are single-vertex graphs and they all have the same type.  Consider the threshold graph  in Figure \ref{fig3} whose indecomposable components are all single-vertex graphs.  Let $G_i$ denote the component containing  $v_i$.  Then $G_1$ and $G_2$ have type $S_1$ while $G_3$ and $G_4$ have type $K_1$.   And indeed the function $\pi(v_0) = v_0, \pi(v_1) = v_2, \pi(v_2) = v_1, \pi(v_3) = v_4, \pi(v_4) = v_3$ is an automorphism  of the graph.  It ``crosses" indecomposable components, mapping the vertices of $G_1$ and $G_2$ to each other and the vertices of $G_3$ and $G_4$ to each other. 

To address this issue, we proposed a {\it compact version} of Tyshkevich's decomposition theorem in \cite{Ch25}.   In the decomposition of $G$, when $G_i$, $i \ge 1$, is a single-vertex graph, it either has type $K_1$ or type $S_1$.  Additionally, when $G_0$ {\it and} $G_1$ are both single-vertex graphs, we shall say that the type of $G_0$ is the same as the type of $G_1$.  
%The compact canonical decomposition of $F$ 

The {\it compact canonical decomposition of $G$} is formed by {\it maximally} combining consecutive single-vertex components in 
the canonical decomposition of $G$ that have the same type into a single component.
%of the same type in the canonical decomposition of $F$ into a single component. 
That is, if there are $m$ consecutive components of type $K_1$, they are replaced with 
%replace them with 
the complete graph on $m$ vertices with $K$-max as its $KS$-partition (i.e., the degree sequence is $((m-1)^m; \emptyset)$).  On the other hand, if their type is $S_1$, they are replaced with
%replace them with 
the graph with $m$ isolated vertices whose $KS$-partition is $S$-max (i.e., the degree sequence is $(\emptyset; 0^m)$).
%\end{definition}

%\medskip

%\noindent {\it Example 3:}  
For example, if we describe the indecomposable components of $G$ using their degree sequences and 
$G = (\emptyset; 0) \circ (0; \emptyset) \circ (0; \emptyset) \circ (0; \emptyset) \circ (\emptyset; 0) \circ (\emptyset; 0) \circ (\emptyset; 0) \circ (0)$, then its compact canonical decomposition is $G = (\emptyset; 0) \circ(2^3; \emptyset) \circ (\emptyset; 0^4)$. On the other hand,  the canonical decomposition of the threshold graph in Figure \ref{fig3} is $(0; \emptyset) \circ (0; \emptyset) \circ (\emptyset; 0) \circ (\emptyset; 0) \circ (0)$ so its compact canonical decomposition is $(1^2;  \emptyset) \circ (\emptyset; 0^3)$.

%Consider $G$ whose canonical decomposition is made up entirely of trivial components:
%$$G = (0; \emptyset) \circ (0; \emptyset) \circ (\emptyset; 0) \circ (\emptyset; 0) \circ (\emptyset; 0) \circ (0).$$
%By combining the trivial components of the same type (and where $G_0$ adopts the type of $G_1$), we obtain the compact canonical decomposition of $G$:
%$$G = (1^2; \emptyset) \circ (\emptyset; 0^4).$$

\medskip

\begin{theorem}
 \label{canon2}
{\bf (The Compact Canonical Decomposition Theorem  \cite{Ch25})}   The compact canonical decomposition of a graph $G$ is obtained from Tyshkevich's canonical decomposition of $G$ by maximally combining the single-vertex components of the same type into a single component.  The compact decomposition of $G$ is unique up to isomorphism. Each component is an indecomposable graph with at least two vertices, a complete graph or a graph of isolated vertices.
% with $KS$-partition of the form $((m-1)^m; \emptyset)$ or a graph of isolated vertices with $KS$-partition of the form $(\emptyset; (m-1)^m).$
\end{theorem}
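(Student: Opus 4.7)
The plan is to derive the existence portion by performing the prescribed merging on Tyshkevich's canonical decomposition (Theorem \ref{mainthm2}) and to derive uniqueness from the uniqueness of that canonical decomposition.

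First, I would invoke Theorem \ref{mainthm2} to obtain $G = (G_k, A_k, B_k) \circ \cdots \circ (G_1, A_1, B_1) \circ G_0$ with each $G_i$ indecomposable. I would then identify the maximal runs of consecutive indices on which $G_i$ is a single-vertex graph of the same type. For a run of $m$ consecutive $K_1$-type components, one verifies by induction on $m$, using the definition of the composition operation, that
$$(\{v_m\}, \{v_m\}, \emptyset) \circ \cdots \circ (\{v_1\}, \{v_1\}, \emptyset) = (K_m, \{v_1, \ldots, v_m\}, \emptyset),$$
because each newly composed K-part vertex becomes adjacent to every vertex already present, so the K-parts accumulate into a single clique. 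A symmetric calculation shows that $m$ consecutive $S_1$-type components compose to the graph on $m$ isolated vertices with the S-max partition having paired degree sequence $(\emptyset; 0^m)$. The convention that $G_0$ takes the type of $G_1$ when both are single-vertex allows the rightmost run to absorb $G_0$, producing either a complete graph or an isolated-vertex graph as the rightmost component.

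Uniqueness of the compact decomposition follows from Theorem \ref{mainthm2}: the canonical decomposition is unique up to isomorphism of its components, and the merging procedure is deterministic (it replaces each maximal same-type single-vertex run by one block), so the output is well-defined up to isomorphism. The classification of components is then immediate from the construction. Any unmerged component $(G_i, A_i, B_i)$ must be an indecomposable graph with at least two vertices (otherwise it would have been absorbed into a run); any merged $K_1$-type run yields a complete graph; and any merged $S_1$-type run yields a graph of isolated vertices.

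The main obstacle is a bookkeeping one: verifying that the composition of $m$ consecutive single-vertex K-parts really equals $K_m$ in K-max form, and that absorbing $G_0$ into an adjacent run of the same type respects the composition identity $G = (G_k, A_k, B_k) \circ \cdots \circ G_0$. Both are direct consequences of the explicit composition rule and the associativity noted in Section \ref{sec2}, but one must treat the rightmost position with care since $G_0$ is written as a plain graph rather than a split graph in the canonical form.
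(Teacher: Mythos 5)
The paper does not actually prove Theorem \ref{canon2}; it is stated as a result imported from \cite{Ch25}, so there is no in-paper proof to compare your attempt against. Judged on its own, your argument is sound and is the natural one: existence follows by collapsing each maximal same-type run of single-vertex components using the explicit composition rule (the $K$-parts accumulate under composition, so a run of $m$ type-$K_1$ components composes to the complete graph with $K$-max partition, and dually a run of type-$S_1$ components composes to isolated vertices with $S$-max partition), associativity guarantees the overall composition identity is preserved by the regrouping, and uniqueness is inherited from Theorem \ref{mainthm2} because the merging is a deterministic, isomorphism-respecting function of the canonical decomposition (note that a component isomorphism $(G_i,A_i,B_i)\cong(G_i',A_i',B_i')$ preserves the $K_1$ versus $S_1$ type, so corresponding runs match up). One small point of phrasing: a single-vertex component with no same-type neighbor forms a run of length one, so it is not an ``unmerged indecomposable graph with at least two vertices'' but rather becomes a one-vertex complete graph or a one-vertex graph of isolated vertices; the classification in the statement still covers this case, but your parenthetical ``otherwise it would have been absorbed into a run'' should be read accordingly.
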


We can now  characterize the automorphisms of $G$. 
\medskip

\begin{theorem}
Let $G= (G'_\ell , A'_\ell, B'_\ell) \circ (G'_{\ell-1}, A'_{\ell-1}, B'_{\ell-1})  \circ \hdots \circ (G'_1, A'_1, B'_1) \circ G'_0$ be the compact canonical decomposition of $G$.  Then $Aut(G)$ is isomorphic to 
$$ Aut(G'_\ell) \times Aut(G'_{\ell-1})   \times \hdots \times Aut(G'_1) \times Aut(G'_0).$$
\label{thmautomorph}
\end{theorem}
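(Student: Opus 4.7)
The plan is to construct an explicit isomorphism
\[
\Phi\colon \mathrm{Aut}(G'_\ell)\times\cdots\times\mathrm{Aut}(G'_0)\to \mathrm{Aut}(G)
\]
by gluing together component-wise automorphisms, and then verify that it is a bijective group homomorphism. Given a tuple $(\pi_\ell,\dots,\pi_0)$, let $\Phi(\pi_\ell,\dots,\pi_0)$ be the permutation of $V(G)$ that acts as $\pi_i$ on each $V(G'_i)$.

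To see that $\Phi(\pi_\ell,\dots,\pi_0)$ lies in $\mathrm{Aut}(G)$, I note that adjacencies inside a single $V(G'_i)$ are handled automatically because $\pi_i \in \mathrm{Aut}(G'_i)$. For a pair $v\in V(G'_i)$, $w\in V(G'_j)$ with $i>j$, adjacency in $G$ is equivalent to $v\in A'_i$, so I only need each $\pi_i$ to preserve $A'_i$ setwise. Theorem \ref{canon2} leaves three possibilities for $G'_i$ when $i\geq 1$: an indecomposable split graph on at least two vertices, a complete graph (from merging type-$K_1$ components), or an edgeless graph (from merging type-$S_1$ components). In the first case Proposition \ref{indecomsplit} says $G'_i$ is balanced, so by Proposition \ref{uniqueKSprop} its $KS$-partition is unique and hence preserved by every automorphism of $G'_i$; in the latter two, $A'_i$ is either the full vertex set or empty, so every permutation preserves it trivially. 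That $\Phi$ is a group homomorphism follows from the definition, and injectivity is immediate since $\Phi(\pi_\ell,\dots,\pi_0)=\mathrm{id}_{V(G)}$ forces each $\pi_i=\mathrm{id}_{V(G'_i)}$.

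The substantive step is surjectivity. Given $\sigma\in\mathrm{Aut}(G)$, I need to show $\sigma(V(G'_i))=V(G'_i)$ for every $i$; once this is established, $\pi_i:=\sigma|_{V(G'_i)}$ is an automorphism of $G'_i$ (adjacencies internal to $V(G'_i)$ are intrinsic to $G'_i$), and $\Phi(\pi_\ell,\dots,\pi_0)=\sigma$. My plan is induction on $\ell$, peeling off the top component. Applying $\sigma$ to the entire decomposition yields $(\sigma(G'_\ell),\sigma(A'_\ell),\sigma(B'_\ell))\circ\cdots\circ\sigma(G'_0)$, which is another compact canonical decomposition of the same graph $G$ because $\sigma$ is an isomorphism from $G$ to itself and the composition operation is defined purely in terms of adjacencies. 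The uniqueness half of Theorem \ref{canon2} is then invoked as a statement about vertex partitions of $V(G)$, not merely about isomorphism types of components: the merging performed in the compact version is designed precisely so that the only residual ambiguity in a raw canonical decomposition --- the freedom to swap vertices across two consecutive single-vertex components of the same type --- has been absorbed into a single larger component. Hence $\sigma(V(G'_\ell))=V(G'_\ell)$, after which induction applies to the restriction of $\sigma$ to $V(G)\setminus V(G'_\ell)$, itself an automorphism of $G'_{\ell-1}\circ\cdots\circ G'_0$.

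The main obstacle will be promoting the uniqueness in Theorem \ref{canon2} to a genuine vertex-set invariance. The threshold graph of Figure \ref{fig3} illustrates that, without the compact merging, $V(G'_i)$ is \emph{not} pinned down by $G$ alone. To handle this, I plan to characterize $V(G'_\ell)$ intrinsically: the smallest-$p$ solution of the degree-sequence equation in Theorem \ref{mainthm1} identifies a minimal top component in the raw canonical decomposition, and the compact step maximally extends it by the adjacent same-type single-vertex blocks. Any vertex that an automorphism could transport across the boundary between raw components must sit inside one of those same-type runs, and in the compact decomposition such a run has already been merged into one component whose automorphism group includes the corresponding permutation. Once this vertex-set invariance is in place, the remaining steps are routine.
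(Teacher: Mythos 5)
Your forward direction (every tuple of component automorphisms glues to an automorphism of $G$) is sound and matches the paper's argument: cross-component adjacency depends only on membership in $A'_i$, and each $A'_i$ is preserved because the component is either balanced (Proposition \ref{indecomsplit}, hence a unique $KS$-partition) or has $A'_i$ equal to everything or nothing. The gap is in surjectivity, which is the substantive half of the theorem. You invoke the uniqueness clause of Theorem \ref{canon2} ``as a statement about vertex partitions of $V(G)$,'' but that theorem only asserts uniqueness \emph{up to isomorphism} of the components; it says nothing about the vertex sets $V(G'_i)$ being pinned down inside $V(G)$. You correctly flag this as ``the main obstacle,'' but your plan to overcome it reduces to the assertion that ``any vertex that an automorphism could transport across the boundary between raw components must sit inside one of those same-type runs'' --- and that assertion \emph{is} the theorem's hard content, not a routine consequence of the degree-sequence equation in Theorem \ref{mainthm1}. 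Nothing in your sketch rules out, say, an automorphism carrying a vertex of $A'_\ell$ into $V(H)$ when the top compact component is a complete graph; excluding that requires knowing that $H$ has no dominant vertex, which in turn requires an argument about what maximal merging guarantees.

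The paper closes exactly this gap with Lemma \ref{autchar3}, which characterizes precisely when an automorphism of $(G,A,B)\circ H$ can cross the boundary: only via a swing vertex of $G$ paired with a dominant or isolated vertex of $H$. The proof then checks, case by case over the three possible forms of $G'_\ell$ in Theorem \ref{canon2}, that neither condition can occur: an indecomposable split component with at least two vertices is balanced and so has no swing vertices, while if $G'_\ell$ is complete (resp.\ edgeless) then $H$ can have no dominant (resp.\ isolated) vertex, since Proposition \ref{decomposable} would otherwise peel off another single-vertex component of the same type that maximal merging should already have absorbed. To repair your proof you would need either to import Lemma \ref{autchar3} and run this case analysis, or to genuinely prove the vertex-set invariance you posit --- for instance by showing that in a compact canonical decomposition no degree ties occur across component boundaries, so the partition is recoverable from degrees alone. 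As written, the surjectivity step is a restatement of what must be proved rather than a proof of it.
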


%\begin{center}
%{*****}
%\end{center}
%\smallskip

%\medskip	
%\bigskip % Optional: Adds a bit of extra vertical space before the line
\centerline{$*  *  *  * *$}
\bigskip % Optional: Adds a bit of extra vertical space after the line
\medskip

Our results in \cite{Ch25} imply Theorem \ref{thmautomorph},  but we prove it here for completeness.  When $G$ is a split graph with $KS$-partition $(A,B)$, let $Aut(G,A,B)$ contain the automorphisms of $G$ that fixes $A$ and $B$ too; i.e., the automorphism maps the vertices in $A$ to vertices in $A$ and the vertices in $B$ to vertices in $B$.  
We shall make use of the results below.  %Note that  a vertex is {\it dominant} if it is adjacent to all other vertices in the graph.

\begin{proposition}
{\bf  \cite{Ch25}}
   When $G$ is a balanced split graph with $KS$-partition $(A,B)$,  $Aut(G) = Aut(G, A, B)$. 
   \label{balanced}
  \end{proposition}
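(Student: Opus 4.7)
The plan is to establish the equality by proving two containments. The containment $Aut(G,A,B) \subseteq Aut(G)$ is immediate from the definition, since $Aut(G,A,B)$ is defined as the subgroup of $Aut(G)$ consisting of automorphisms that setwise fix $A$ and $B$. So the substance of the proof is the reverse inclusion: every automorphism of $G$ must send $A$ to $A$ and $B$ to $B$.

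To prove this, I would take an arbitrary $\pi \in Aut(G)$ and consider the pair $(\pi(A), \pi(B))$. Because $\pi$ is an adjacency-preserving bijection on $V(G)$, the image of a clique is a clique and the image of a stable set is a stable set. Since $(A,B)$ is a $KS$-partition, the set $A$ induces a clique and $B$ induces a stable set; therefore $\pi(A)$ induces a clique and $\pi(B)$ induces a stable set. Together with the fact that $\pi$ is a bijection, this shows that $(\pi(A), \pi(B))$ is itself a $KS$-partition of $G$.

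The final step is to invoke Proposition \ref{uniqueKSprop}: because $G$ is balanced, its $KS$-partition is unique. Hence $(\pi(A), \pi(B)) = (A, B)$, which means $\pi(A) = A$ and $\pi(B) = B$, so $\pi \in Aut(G,A,B)$. Combining the two containments yields $Aut(G) = Aut(G,A,B)$. The only subtle point, and really the crux of the argument, is the appeal to the uniqueness of the $KS$-partition of a balanced split graph; without balancedness one could only conclude that $\pi$ permutes the (possibly three) $KS$-partitions described by the Hammer--Simeone theorem, and swing vertices could legitimately be exchanged between the two parts.
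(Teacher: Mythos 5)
Your proof is correct. The paper itself states this proposition without proof (it is imported from \cite{Ch25}), but your argument is exactly the natural one and uses precisely the tool the paper sets up for this purpose: an automorphism carries the clique part to a clique and the stable part to a stable set, so $(\pi(A),\pi(B))$ is again a $KS$-partition, and Proposition \ref{uniqueKSprop} (uniqueness of the $KS$-partition of a balanced split graph) forces $\pi(A)=A$ and $\pi(B)=B$. Your closing remark correctly identifies why balancedness is essential --- in an unbalanced split graph a swing vertex can be moved between the two parts, so the conclusion would fail.
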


\smallskip

\begin{proposition} {\bf  \cite{Ch25}}
%(Proposition 2.10 in \cite{Ch25})
  \label{decomposable}  Let $G$ be a graph with two or more vertices. 
%\begin{enumerate}
%\item[a.]  

\noindent (i) If $G$ has an isolated vertex $u$ then $G = (\{u\}, \emptyset, \{u\}) \circ (G - u)$. %where $G'$ is the graph on $u$.
  
%\item[b.]  

\noindent (ii) If $G$ has a dominant\footnote{A vertex is {\it dominant} if it is adjacent to all other vertices in the graph.
}  vertex $v$  then $G = (\{v\}, \{v\}, \emptyset) \circ (G-v)$. % where $G'$ is the graph on $v$. 
   
 %\item[c.]  
 
% (iii) If $G$ is an indecomposable split graph then $G$ is balanced.
 
%\end{enumerate}
\end{proposition}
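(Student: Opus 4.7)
The plan is to verify both claims directly from the definition of the composition operation. Recall that given a split graph $(G', A, B)$ and a graph $H$, the composition $(G', A, B) \circ H$ has vertex set $V(G') \cup V(H)$ and edge set $E(G') \cup E(H) \cup \{xy : x \in A,\, y \in V(H)\}$. For each part, I would set $H := G - u$ or $H := G - v$ and check that the right-hand side of the stated equality, when unpacked, reproduces exactly $V(G)$ and $E(G)$.

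For part (i), take $u$ isolated in $G$ and set $H = G - u$. The split component $(\{u\}, \emptyset, \{u\})$ consists of the single vertex $u$ placed in the $S$-part, with empty $K$-part and empty edge set. The composition rule therefore contributes no edges between $u$ and $V(H)$, so the edge set of $(\{u\}, \emptyset, \{u\}) \circ H$ is simply $E(G - u)$. Because $u$ is isolated in $G$, no edge of $G$ is incident to $u$ and hence $E(G - u) = E(G)$. The vertex sets also match trivially.

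For part (ii), take $v$ dominant in $G$ and set $H = G - v$. The split component $(\{v\}, \{v\}, \emptyset)$ has $\{v\}$ as its $K$-part and no internal edges, so the composition adds the full set $\{vy : y \in V(H)\}$. The resulting edge set is $E(G - v) \cup \{vy : y \in V(G) \setminus \{v\}\}$, which equals $E(G)$ precisely because $v$ being dominant means its incident edges in $G$ are exactly those of the form $vy$ with $y \neq v$.

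No real obstacle arises; the proposition is essentially a sanity check on the composition definition in the two extreme cases where one summand is a single vertex. The only point requiring care, which is cosmetic rather than conceptual, is placing the single vertex in the correct part of the split component: in the $S$-part for an isolated vertex (so the composition rule adds no incidences to $V(H)$) and in the $K$-part for a dominant vertex (so the rule adds all possible incidences to $V(H)$).
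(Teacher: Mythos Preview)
Your proof is correct. The paper does not actually prove this proposition; it is quoted from \cite{Ch25} and stated without proof, so there is no in-paper argument to compare against. Your direct verification from the definition of the composition operation is exactly the natural approach, and the one point you flag---placing the single vertex in the $S$-part for an isolated vertex and in the $K$-part for a dominant vertex---is indeed the only thing to get right.
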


\smallskip
%Combining Proposition 2.6  and Lemma 3.2 of \cite{}, we have the next lemma.

\begin{lemma}
 \label{autchar3}
 {\bf  \cite{Ch25}} Let $G^* = (G,A,B) \circ H$.  There is an automorphism $\pi$ of $G^*$ that maps $u \in V(G)$ to $v \in V(H)$ if and only if one of the following conditions hold:
 
 \noindent (i) $u \in A$ and a swing vertex of $G$ while $v$ is a dominant vertex of $H$ or 
 
 \noindent (ii) $u \in B$ and a swing vertex of $G$ while $v$ is an isolated vertex of $H$.
 
 %{\bf  \cite{Ch25}} Let $G^* = (G,A,B) \circ H$.   If $(G,A,B)$ is a balanced split graph  or  $H$ has no dominant nor isolated vertices then $Aut(G^*) = Aut(G, A, B) \times Aut(H)$.  
  %If one of the conditions below hold:
  %\smallskip
 % If 
  %\smallskip
  
%\noindent  (i)  $(G,A,B)$ is a balanced split graph  or

%\noindent (ii) $H$ has no dominant nor isolated vertices
 %\smallskip

%\noindent  then $Aut(G^*) = Aut(G, A, B) \times Aut(H)$.  
\end{lemma}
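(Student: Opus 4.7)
The plan is to settle both directions of the biconditional by a single degree calculation in $G^*$. For the forward direction I will exploit the fact that any automorphism preserves degrees, which pins down $u$ and $v$ almost immediately; for the converse I will show that the transposition swapping $u$ and $v$ is already an automorphism.

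First I would tabulate degrees in $G^* = (G,A,B) \circ H$: each $a \in A$ has degree $(|A|-1) + |V(H)| + \deg_B(a)$, each $b \in B$ has degree $\deg_A(b) \le |A|$, and each $h \in V(H)$ has degree $|A| + \deg_H(h)$ with $0 \le \deg_H(h) \le |V(H)|-1$. Given $\pi(u) = v$ with $u \in V(G)$ and $v \in V(H)$, I would equate $\deg_{G^*}(u) = \deg_{G^*}(v)$ and split on whether $u \in A$ or $u \in B$. If $u \in A$, the extremal bounds force $\deg_B(u) = 0$ and $\deg_H(v) = |V(H)|-1$; thus $v$ is dominant in $H$ and $B \cup \{u\}$ is stable of size $|B|+1$, so by the Hammer-Simeone trichotomy $(A,B)$ is $K$-max and $u$ is a swing vertex, which is condition (i). If $u \in B$, a dual argument forces $\deg_A(u) = |A|$ and $\deg_H(v) = 0$; hence $A \cup \{u\}$ is a clique, the partition is $S$-max with $u$ a swing vertex, and $v$ is isolated in $H$, which is condition (ii).

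For the converse, in each case I would simply take $\pi$ to be the transposition $(u\ v)$ that fixes every other vertex. It is an automorphism iff $N_{G^*}(u) \setminus \{v\} = N_{G^*}(v) \setminus \{u\}$, since the pair $\{u,v\}$ is sent to itself and every other pair of vertices is fixed. In (i), the swing condition gives $N_{G^*}(u) = (A \setminus \{u\}) \cup V(H)$ while the dominance of $v$ gives $N_{G^*}(v) = A \cup (V(H) \setminus \{v\})$; deleting $v$ and $u$ respectively leaves the common set $(A \setminus \{u\}) \cup (V(H) \setminus \{v\})$. In (ii), both neighborhoods are exactly $A$, and removing a vertex outside $A$ changes nothing.

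The whole argument is light on obstacles. The only place where care is needed is the forward direction: one must use the tightness of the degree equality not just to conclude that $v$ is dominant (resp.\ isolated) in $H$, but also that $(A,B)$ must be $K$-max (resp.\ $S$-max), so that Hammer-Simeone guarantees $u$ itself plays the role of the swing vertex. Everything else reduces to bookkeeping about the three classes of vertices in the composition.
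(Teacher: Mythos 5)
The paper does not actually prove Lemma~\ref{autchar3}; it is quoted from \cite{Ch25} and used as a black box, so there is no in-paper argument to compare against. Judged on its own, your proof is correct and self-contained. The forward direction needs only that automorphisms preserve degree: your degree bookkeeping for the three vertex classes of $(G,A,B)\circ H$ is right, and the extremal squeeze ($\deg_B(u)=0$ and $\deg_H(v)=|V(H)|-1$ when $u\in A$, dually $\deg_A(u)=|A|$ and $\deg_H(v)=0$ when $u\in B$) does force exactly the swing/dominant and swing/isolated configurations, with Hammer--Simeone then certifying that the partition is $K$-max (resp.\ $S$-max) so that $u$ really is a swing vertex in the paper's sense. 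The converse via the transposition $(u\ v)$ is also sound: the criterion $N_{G^*}(u)\setminus\{v\}=N_{G^*}(v)\setminus\{u\}$ is the standard twin condition, and your neighborhood computations in both cases check out (in case (i) the two vertices are adjacent true twins, in case (ii) non-adjacent false twins). This is arguably stronger than what the lemma asks for, since you exhibit an automorphism moving only $u$ and $v$. No gaps.
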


Lemma \ref{autchar3} is important because it describes the two conditions when an automorphism of $G^*$ crosses over, mapping a vertex of $G$ to a vertex of $H$.  Let us now prove Theorem \ref{thmautomorph}.

%\noindent {\it Proof of Theorem \ref{thmautomorph}:}
\begin{proof}
We shall do induction on $\ell$.  When $\ell = 0$,  $G = G'_0$ so $Aut(G) = Aut(G'_0)$ is true.  Assume the theorem holds for graphs with $\ell$ or fewer components in its compact canonical decomposition.  Let $G$ be a graph with $\ell + 1$ components in its compact canonical decomposition.  Thus, we can express $G$ as  $G =  (G'_\ell, A'_\ell, B'_\ell) \circ H$ where $H = (G'_{\ell-1}, A'_{\ell-1}, B'_{\ell-1}) \circ \cdots \circ (G'_1, A'_1, B'_1) \circ G'_0$.   We shall argue that $Aut(G) = Aut(G'_\ell) \times Aut(H)$.

There are three possibilities for $G'_\ell$ according to Theorem \ref{canon2}.  For each case, we will argue that the two conditions of Lemma \ref{autchar3} do not hold.  It will then imply that  every automorphism $\pi$ of $G$ maps $V(G'_\ell)$ to $V(G'_\ell)$ and $V(H)$ to $V(H)$.  That is, $Aut(G) \subseteq Aut(G'_\ell) \times Aut(H)$.

When $G'_\ell$ is an indecomposable split graph with at least two vertices,  $G'_\ell$ is a {\it balanced} split graph by Proposition \ref{indecomsplit} and therefore has no swing vertices.  Conditions (i) and (ii) in Lemma \ref{autchar3} do not hold. 

%It follow According to Lemma \ref{autchar3}, this implies that  $Aut(G) = Aut(G'_\ell, A'_\ell, B'_\ell) \times Aut(H)$.  But from Proposition \ref{balanced}, $Aut(G'_\ell, A'_\ell, B'_\ell) = Aut(G'_\ell)$ so $Aut(G) = Aut(G'_\ell) \times Aut(H)$.

Suppose instead that $G'_\ell$ is a complete graph with $KS$-partition $((m-1)^m; \emptyset)$.   Notice that condition (ii) of Lemma \ref{autchar3} does not hold because $B$ is an empty set, so we only have to check condition (i).  $G'_\ell$ is a type-$K_1$ single-vertex graph or was formed by maximally combining consecutive type-$K_1$ single-vertex graphs.   If $|H| = 1$,  then $H = G'_0$ is a single vertex graph that should have been combined with $G'_\ell$ in the compact canonical decomposition of $G$.  Thus, $|H| \ge 2$.  If $H$ has a dominant vertex, then $H$ is decomposable by Proposition \ref{decomposable} with the leftmost component of type $K_1$.  Again, this type-$K_1$ component should have been combined with $G'_\ell$.  Hence, $H$ has no dominant vertices.  

%is itsea single-vertex graph, then $H = G'_0$.  Since $G'_{\ell}$ was obtained by  maximally combining consecutive type-$K_1$ indecomposable components of $G$, $H$ should have been combined with the other components to create $G'_\ell$ too.  Thus, we can assume $H$ has two or more vertices.  

%If $H$ has a dominant vertex, then $H$ is decomposable by Proposition \ref{decomposable} with the leftmost component of type $K_1$.  %That is, in the canonical decomposition of $G$, $G'_{\ell-1}$ is of type $K_1$.  
%Applying the same reasoning as the previous paragraph, this component should be part of $G'_{\ell}$ too.  
%which was itself obtained by maximally combining consecutive type-$K_1$ trivial components.  
 %Hence, it must be the case that $H$ has no dominant vertices, satisfying the second condition in   Lemma \ref{autchar3}.  Therefore, $Aut(G) = Aut(G'_\ell, A'_\ell, B'_\ell)  \times Aut(H)$. But in $ (G'_\ell, A'_\ell, B'_\ell)$, all the vertices are in $A'_\ell$ so  $Aut(G'_\ell, A'_\ell, B'_\ell) = Aut(G'_\ell)$, which is the automorphism group of the complete graph with $|A'_\ell |$ vertices.  Thus, $Aut(G) = Aut(G'_\ell) \times Aut(H)$.

Finally, assume $G'_\ell$ is a graph of isolated vertices with $KS$-partition $(\emptyset; (m-1)^m)$.  
%Our argument will be similar to that of the previous paragraph.  
Condition (i) of Lemma \ref{autchar3} does not hold because $A$ is an empty set.  When $|H| = 1$, $H = G'_0$ and should have been combined with $G'_\ell$.  So $|H| \ge 2$.   If $H$ has an isolated vertex, then $H$ is decomposable by Proposition \ref{decomposable} with the leftmost component of type $S_1$.  Again, the leftmost component should have been combined with $G'_\ell$.  Thus, $H$ has no isolated vertices so condition (ii) of Lemma \ref{autchar3} does not hold.

We have established that $Aut(G) \subseteq Aut(G'_\ell) \times Aut(H)$.  Let us prove the converse.  Let $\pi_1 \in Aut(G'_\ell)$ and $\pi_2 \in Aut(H)$.   Define $\pi$ on $V(G)$ so that $\pi(v) = \pi_1(v)$ when $v \in V(G'_\ell)$ and $\pi(v) = \pi_2(v)$ when $v \in V(H)$.  We now have to argue that $\pi$ preserves the adjacencies of $G$.  

%Consider an edge $xy$ of $G$.  
Let $x, y \in V(G'_\ell)$ or $x, y \in V(H)$.  Then $xy$ is an edge of $G$ if and only if $\pi(x) \pi(y)$ is an edge of $G$ because $\pi$ behaves like $\pi_1$ in $G'_\ell$ and like $\pi_2$ in $H$. 
%$\pi(x) \pi(y)$ is also an edge of $G$ because $\pi$ behaves like $\pi_1$ in $G'_\ell$ and like $\pi_2$ in $H$.  
 Thus, assume $x \in V(G'_\ell)$ and $y \in V(H)$; the case when $x \in V(H)$ and $y \in V(G'_\ell)$ is addressed similarly.   Suppose $xy$ is an edge of $G$.  Since none of the vertices in $B'_\ell$ are adjacent to the vertices in $H$,  $x \in A'_\ell$.   If we can show that $\pi(x) \in A'_\ell$, then $\pi(x) \pi(y)$ is an edge of $G$ too because every vertex in $A'_\ell$ is adjacent to every vertex in $H$.  

Since $A'_\ell \neq \emptyset$, $G'_\ell$ cannot be a graph of isolated vertices.   When  $G'_\ell$  is an  indecomposable graph with at least two vertices, $(A'_\ell, B'_\ell)$ is its unique $KS$-partition.  Every automorphism of $G'_\ell$  must fix $A'_\ell$ and $B'_\ell$ so $\pi_1(x) \in A'_\ell$ because $x \in A'_\ell$.  On the other hand, when $G'_\ell$ is a complete graph  with $KS$-partition $((m-1)^m; \emptyset)$, all of its vertices are in $A'_\ell$ so $\pi_1(x) \in A'_\ell$ too.  We have shown that when $x \in A'_\ell$, $\pi(x) = \pi_1(x) \in A'_\ell$.

 On the other hand, suppose $xy$ is not an edge of $G$.  Then $x \in B'_\ell$.   If we can show that $\pi(x) \in B'_\ell$ too, then $\pi(x) \pi(y)$ is also not an edge of $G$.  Now, $G'_\ell$ cannot be a complete graph because $B'_\ell \neq \emptyset$.  So $G'_\ell$ must be an  indecomposable graph with at least two vertices with $(A'_\ell, B'_\ell)$ as its unique $KS$-partition or $G'_\ell$ is a graph of isolated vertices with $KS$-partition $(\emptyset; (m-1)^m)$.  In the former case, $\pi_1$ fixes $B'_\ell$ so $\pi_1(x) \in B'_\ell$.  In the latter case, all the vertices of $G'_\ell$ are in $B'_\ell$ so $\pi_1(x) \in B'_\ell$ trivially.   Thus, $\pi(x) = \pi_1(x) \in B'_\ell$. 
 
 We have shown that every automorphism in $Aut(G'_\ell) \times Aut(H)$ is also an automorphism of $Aut(G)$ so $Aut(G) = Aut(G'_\ell) \times Aut(H)$.  By the induction hypothesis,  $Aut(H) = Aut(G'_{\ell-1}) \times \cdots \times Aut(G'_1) \times Aut(G'_0)$ and therefore $Aut(G) = Aut(G'_\ell) \times Aut(G'_{\ell-1}) \times \cdots \times Aut(G'_1) \times Aut(G'_0).$
\end{proof}

\bigskip % Optional: Adds a bit of extra vertical space before the line
\centerline{$*  *  *  * *$}
\bigskip % Optional: Adds a bit of extra vertical space after the line
\medskip

In graph theory, the objective of symmetry-breaking is to tell the vertices of a graph apart.  Two vertices $u$ and $v$ can be confused with each other if there is an automorphism that maps $u$ to $v$.  One way to remove the confusion is to color the vertices, like what we did in   Figure \ref{fig5} for $C_5$.  
%The vertices are colored red, blue, red, green, red in clockwise order.   
The vertices that are in danger of being confused with each other are the three red ones.  But one red vertex has  neighbors colored blue and green, another has neighbors colored red and green,  and yet another has neighbors colored red and blue.  Thus, all the vertices can be identified based on their color and that of their neighbors.

%In graph theory, the objective of symmetry-breaking is to tell the vertices of a graph apart.  Consider $C_5$ again.   We cannot distinguish the vertices from each other because for any two vertices $u$ and $v$, there is an automorphism of $C_5$ that maps $u$ to $v$.  One way to address this problem is to color the vertices.  For example, we might color them red, blue, red, green, red in clockwise order as shown in Figure \ref{fig5}.    The vertices that are in danger of being confused with each other are the three red ones.  But one red vertex has  neighbors colored blue and green, another has neighbors colored red and green,  and yet another has neighbors colored red and blue.  Thus, all the vertices can be identified based on their color and that of their neighbors. 

\begin{figure}[t]
\begin{center}
\begin{tikzpicture}
    % Define vertices
    \node[circle,draw,fill=red] (v1) at (90:1) {};    % Top
    \node[circle,draw,fill=red] (v2) at (162:1) {};   % Top left
    \node[circle,draw,fill=green] (v3) at (234:1) {};   % Bottom left
    \node[circle,draw,fill=red] (v4) at (306:1) {};   % Bottom right
    \node[circle,draw,fill=blue] (v5) at (18:1) {};  
    % Draw edges
    \draw[-] (v1) -- (v2);
    \draw[-] (v2) -- (v3);
    \draw[-] (v3) -- (v4);
    \draw[-] (v4) -- (v5);
    \draw[-] (v5) -- (v1);
\end{tikzpicture}
\hspace*{3em}
\begin{tikzpicture}
    % Define vertices
    \node[circle,draw] (v1) at (90:1) {};    % Top
    \node[circle,draw,fill=red] (v2) at (162:1) {};   % Top left
    \node[circle,draw] (v3) at (234:1) {};   % Bottom left
    \node[circle,draw] (v4) at (306:1) {};   % Bottom right
    \node[circle,draw,fill=blue] (v5) at (18:1) {};  
    % Draw edges
    \draw[-] (v1) -- (v2);
    \draw[-] (v2) -- (v3);
    \draw[-] (v3) -- (v4);
    \draw[-] (v4) -- (v5);
    \draw[-] (v5) -- (v1);
\end{tikzpicture}
\end{center}
\label{fig5}
\caption{A distinguishing labeling of $C_5$ on the left and a fixing set of $C_5$ on the right. Vertices that are part of the fixing set are assigned distinct colors and those not in the set are assigned a null color.}
\end{figure}
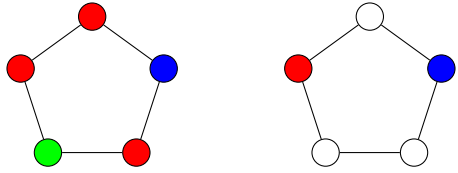

%The above idea is generalized further in {\it distinguishing labelings}.  The vertices are assigned colors so that the labeled version of the graph has only one automorphism, the identity map.  That is, no two vertices in the colored graph can be confused with each other.

%A vertex's identity is derived from its color and its view of the rest of the labeled graph.  

%Two vertices $u$ and $v$ of graph $G$ can be confused with each other if there is an automorphism of $G$ that maps $u$ to $v$. The vertices are assigned colors so that the colored version of the graph has only one automorphism, the identity map.  We now define it formally. 
 
In {\it distinguishing labelings}, the vertices of a graph are assigned colors so that the labeled version of the graph has only one automorphism, the identity map. Formally, a {\it distinguishing labeling $\phi$} of $G$ \cite{AlCo96} colors the vertices of $G$ so that for every non-trivial automorphism $\pi$ of $G$, $\phi(v) \neq \phi(\pi(v))$ for some vertex $v$.  In other words, $\phi$ {\it breaks} all the non-trivial automorphisms of $G$ so that the only automorphism of the labeled graph $(G, \phi)$ is the identity map. The {\it distinguishing number} of $G$, $D(G)$, is the fewest number of colors needed to create a distinguishing labeling of $G$.   For example, $D(K_n) = n$, $D(P_n) = 2$ for $n \ge 2$ and $D(K_{n,n}) = n+1$.

 A concept related to distinguishing labelings is {\it fixing sets} \cite{DBLP:journals/dm/ErwinH06, DBLP:journals/combinatorics/Boutin06, DBLP:journals/gc/FijavzM10}.  A subset $S$ of $V(G)$ is a {\it fixing set} of $G$  if the only automorphism of $G$ that fixes every element in $S$ is the identity map.  That is, whenever an automorphism of $G$ maps each $s \in S$ to itself, it also maps each $v \not \in S$ to itself.  Figure \ref{fig5} shows a fixing set of $C_5$.   The {\it fixing number} of $G$, $Fix(G)$, is the size of a smallest fixing set.   For example,  $Fix(K_n) = n-1$, $Fix(P_n) = 1$ for $n \ge 2$ while $Fix(K_{n,n} )= 2(n-1)$.     We note that $Fix(G) = 0$ if and only if $G$ is a {\it rigid} graph (i.e., $G$ has no non-trivial automorphisms).
 
Every fixing set $S$ of $G$ can be translated into a distinguishing labeling $\phi_S$ by {\it individualizing} each vertex in $S$.  Each vertex in $S$ is assigned a distinct color while the vertices not in $S$ are assigned another color (e.g., the ``null" color).   Thus, $D(G) \leq Fix(G) +1$.  Sometimes, $D(G) = Fix(G) + 1$ but $D(G)$ can be significantly less than $Fix(G)$  because  colors can be reused in distinguishing labelings but not in fixing sets with the exception of the null color.

 One of our research questions was whether $D(G)$ can be computed efficiently when $G$ is a unigraph.  This line of inquiry was intriguing  because unigraphs have a very simple isomorphism algorithm:  when $G$ is a unigraph and $H$ is another graph, $G$ and $H$ are isomorphic if and only if $G$ and $H$ have identical degree sequences.  In our work on distinguishing labelings \cite{Ch06, ArChDe08, Ch09}, we found that whenever $G$ is part of a graph family that has an efficient isomorphism algorithm,  $D(G)$ can be computed in polynomial time.  Tyshkevich's results were essential in addressing the problem.
  
Theorem \ref{thmautomorph} makes symmetry-breaking straightforward. Informally, it implies that if we wish
to break the non-trivial automorphisms of $G$, all we have to do is break the non-trivial automorphisms of the individual
components in the compact canonical decomposition of $G$.

%describes how an automorphism of $G$ acts on the components of its compact canonical decomposition.  The goal of a distinguishing labeling is to break all the non-trivial automorphisms of $G$.  The implication of Theorem \ref{thmautomorph} is the following:

%implication for distinguishing labelings of $G$ is the next lemma.

\begin{theorem} {\bf \cite{Ch25}}
%(Theorem 3.4 in \cite{})
Let $G= (G'_\ell , A'_\ell, B'_\ell) \circ (G'_{\ell-1}, A'_{\ell-1}, B'_{\ell-1})  \circ \hdots \circ (G'_1, A'_1, B'_1) \circ G'_0$ be the compact canonical decomposition of $G$.  Let $\phi$ be a labeling of $G$ and denote as $\phi_i$ the labeling on $G'_i$ when $\phi$ is restricted to $V(G'_i)$.   Then $\phi$ is a distinguishing labeling of $G$ if and only if $\phi_i$ is a distinguishing labeling of $G'_i$ for $i = 0, 1, \hdots, \ell$.  Consequently, $D(G) = \max \{D(G'_i), i = 0, \hdots, \ell\}$.
\label{distthm}
\end{theorem}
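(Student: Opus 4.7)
The plan is to leverage Theorem~\ref{thmautomorph} as the essential engine. Since $Aut(G)$ is isomorphic to $Aut(G'_\ell) \times \cdots \times Aut(G'_0)$ via the natural ``act on each factor separately'' construction that emerged in that proof, every $\pi \in Aut(G)$ is determined by a tuple $(\pi_\ell, \ldots, \pi_0)$ where $\pi_i \in Aut(G'_i)$ permutes $V(G'_i)$ and fixes all other vertices; conversely, every such tuple assembles into a valid automorphism of $G$. Because the vertex sets $V(G'_0), \ldots, V(G'_\ell)$ partition $V(G)$, the labeling $\phi$ is equivalent to the family of its restrictions $\phi_0, \ldots, \phi_\ell$.

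I would then prove the biconditional in two symmetric steps. For the only-if direction I would argue by contrapositive: if some $\phi_j$ fails to distinguish $G'_j$, then there is a non-trivial $\sigma \in Aut(G'_j)$ with $\phi_j(v) = \phi_j(\sigma(v))$ for every $v \in V(G'_j)$. Padding $\sigma$ with the identity on every other factor produces (via the isomorphism above) a non-trivial $\pi \in Aut(G)$ that acts as $\sigma$ on $V(G'_j)$ and fixes all other vertices. Since $\phi$ agrees with $\phi_j$ on $V(G'_j)$ and is trivially preserved elsewhere, $\phi(v) = \phi(\pi(v))$ for every $v \in V(G)$, contradicting that $\phi$ distinguishes $G$. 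For the if direction, take any non-trivial $\pi \in Aut(G)$, locate a coordinate $\pi_j$ which is non-trivial (one must exist, else $\pi$ is the identity), and use the hypothesis that $\phi_j$ is distinguishing on $G'_j$ to produce a witness $v \in V(G'_j)$ with $\phi_j(v) \neq \phi_j(\pi_j(v))$; since $\phi$ and $\pi$ agree with $\phi_j$ and $\pi_j$ on $V(G'_j)$, this same $v$ witnesses $\phi(v) \neq \phi(\pi(v))$.

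The consequence $D(G) = \max_i D(G'_i)$ then follows by a standard argument. For the upper bound, set $c := \max_i D(G'_i)$ and, on each component $G'_i$, pick a distinguishing labeling drawing from the common palette $\{1, \ldots, c\}$; the concatenation is a labeling of $G$ whose restrictions are all distinguishing, so by the if-direction it distinguishes $G$, yielding $D(G) \le c$. For the lower bound, any distinguishing labeling of $G$ using $D(G)$ colors restricts on each $V(G'_i)$ to a labeling using at most $D(G)$ colors, which by the only-if direction must distinguish $G'_i$, giving $D(G'_i) \le D(G)$ for every $i$.

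I do not expect a genuine obstacle here; the whole argument is clean bookkeeping around Theorem~\ref{thmautomorph}. The single point worth verifying carefully is that the group isomorphism $Aut(G) \cong \prod_i Aut(G'_i)$ really is realized by the ``act on each factor, fix the rest'' correspondence, so that a tuple with identity in every slot except the $j$th genuinely corresponds to an automorphism of $G$ that moves only vertices in $V(G'_j)$. This is precisely what was established in the proof of Theorem~\ref{thmautomorph}, so the plan is self-contained.
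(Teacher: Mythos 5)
Your proposal is correct and follows essentially the same route as the paper: both arguments rest on the product structure of $Aut(G)$ from Theorem~\ref{thmautomorph} (padding a non-trivial component automorphism with identities in one direction, extracting a non-trivial coordinate in the other), and both obtain $D(G)=\max_i D(G'_i)$ by reusing a common palette across components. The paper's version is just a more compressed sketch of the same bookkeeping.
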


  To understand why the theorem holds, suppose $\phi_i$ is not distinguishing for some $G'_i$.  Then $(G'_i, \phi_i)$ has a non-trivial automorphism $\pi_i$.   When the latter is  combined with the identity maps of $G'_j$, $j \neq i$,  the result is a non-trivial automorphism of $(G, \phi)$.  Thus, $\phi$ is not distinguishing for $G$. 
  On the other hand, suppose $\phi$ is not a distinguishing labeling of $G$ so $(G, \phi)$ has a non-trivial automorphism $\pi$.  Then $\pi$ can be expressed as $(\pi_\ell, \pi_{\ell-1}, \hdots, \pi_1, \pi_0)$ where each $\pi_i$ is an automorphism of $(G'_i, \phi_i)$.   One of them, say $\pi_j$,   is a non-trivial automorphism of $(G'_j, \phi_j)$ since $\pi$ is non-trivial. Thus, $\phi_j$ is not a distinguishing labeling of $G'_j$. Finally, the colors used to create a distinguishing labeling for $G'_i$ can be reused to create distinguishing labelings for $G'_j$, $j \neq i$.  Hence,  $D(G) = \max \{D(G'_i), i = 0, \hdots, \ell\}$.

%  One of the $\pi_i$'s is non-trivial automorphism since $\pi$ is non-trivial.  Thus, $\phi_i$ is not a distinguishing labeling of $G'_i$ for some $i$.  Finally, the colors used to create a distinguishing labeling for $G'_i$ can be reused to create distinguishing labelings for $G'_j$, $j \neq i$.  Hence,  $D(G) = \max \{D(G'_i), i = 0, \hdots, \ell\}$.

Applying the same reasoning to fixing sets, we obtain the next result.
% and keeping in mind that only the null color can be reused, we obtain the next result.

\begin{theorem} 
Let $G= (G'_\ell , A'_\ell, B'_\ell) \circ (G'_{\ell-1}, A'_{\ell-1}, B'_{\ell-1})  \circ \hdots \circ (G'_1, A'_1, B'_1) \circ G'_0$ be the compact canonical decomposition of $G$.  Let $S \subseteq V(G)$ and let $S_i = S \cap V(G'_i)$.  Then $S$ is a fixing set of $G$ if and only if $S_i$ is a fixing set of $G'_i$    for $i = 0, 1, \hdots, \ell$.  Consequently, $Fix(G) = \sum_{i=0}^\ell Fix(G'_i)$.  
%$\phi$ be a labeling of $G$ and denote as $\phi_i$ the labeling on $G'_i$ when $\phi$ is restricted to $V(G'_i)$.   Then $\phi$ is a distinguishing labeling of $G$ if and only if $\phi_i$ is a distinguishing labeling of $G'_i$ for $i = 0, 1, \hdots, \ell$.  Consequently, $D(G) = \max \{D(G'_i), i = 0, \hdots, \ell\}$.
\label{fixthm}
\end{theorem}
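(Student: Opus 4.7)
The plan is to mirror the structure of the proof of Theorem \ref{distthm}, exploiting Theorem \ref{thmautomorph} to pass between automorphisms of $G$ and tuples of automorphisms of the $G'_i$'s. The key observation is that Theorem \ref{thmautomorph} gives a canonical identification $\pi \leftrightarrow (\pi_\ell, \pi_{\ell-1}, \ldots, \pi_1, \pi_0)$ under which $\pi$ is the identity on $V(G)$ if and only if each $\pi_i$ is the identity on $V(G'_i)$, and under which $\pi$ fixes a vertex $v \in V(G'_i)$ if and only if $\pi_i$ fixes $v$.

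For the forward direction, I would assume $S$ is a fixing set of $G$ and prove each $S_i$ is a fixing set of $G'_i$. Given any automorphism $\pi_i$ of $G'_i$ that fixes every vertex of $S_i$, I would extend $\pi_i$ to an automorphism $\pi$ of $G$ by defining $\pi$ to be $\pi_i$ on $V(G'_i)$ and the identity on every other component; this is a valid automorphism of $G$ by Theorem \ref{thmautomorph}. By construction $\pi$ fixes every vertex of $S$ (on component $i$ because $\pi_i$ fixes $S_i$, on the other components because the identity fixes everything), so $\pi$ must be trivial, forcing $\pi_i$ to be trivial.

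For the backward direction, I would assume each $S_i$ is a fixing set of $G'_i$ and consider any automorphism $\pi$ of $G$ that fixes every vertex of $S$. By Theorem \ref{thmautomorph} write $\pi = (\pi_\ell, \ldots, \pi_0)$. Each $\pi_i$ fixes $S_i$ because $\pi$ does, and since $S_i$ is a fixing set of $G'_i$ each $\pi_i$ is trivial, hence $\pi$ is trivial. This proves $S$ is a fixing set of $G$.

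To deduce the numerical identity $Fix(G) = \sum_{i=0}^\ell Fix(G'_i)$, I would use the disjointness of the vertex sets $V(G'_i)$. For the upper bound, pick a minimum fixing set $T_i$ of each $G'_i$ and let $S = \bigcup_i T_i$; by the equivalence just established, $S$ is a fixing set of $G$, and $|S| = \sum_i |T_i| = \sum_i Fix(G'_i)$. For the lower bound, take any minimum fixing set $S$ of $G$; then each $S_i$ is a fixing set of $G'_i$, so $|S_i| \ge Fix(G'_i)$, and summing gives $|S| \ge \sum_i Fix(G'_i)$. I do not foresee any real obstacle, since Theorem \ref{thmautomorph} does all the heavy lifting; the only thing to be careful about is being explicit that extending an automorphism of a single component by identities on the rest genuinely yields an element of $Aut(G)$, which is exactly what the product structure in Theorem \ref{thmautomorph} guarantees.
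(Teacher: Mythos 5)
Your proposal is correct and follows essentially the same route as the paper, which justifies Theorem \ref{fixthm} by ``applying the same reasoning'' used for Theorem \ref{distthm}: decompose each automorphism of $G$ via the product structure of Theorem \ref{thmautomorph}, extend component automorphisms by identities for one direction and restrict for the other. Your write-up is in fact more explicit than the paper's one-line appeal, and correctly flags the only delicate point, namely that the isomorphism in Theorem \ref{thmautomorph} is realized concretely by restriction to and gluing along the $V(G'_i)$'s.
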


%\smallskip
%\begin{center}
%{*****}
%\end{center}
%\smallskip

\medskip	
\bigskip % Optional: Adds a bit of extra vertical space before the line
\centerline{$*  *  *  * *$}
\bigskip % Optional: Adds a bit of extra vertical space after the line
\medskip

To compute the distinguishing and fixing numbers of unigraphs,  we again make use of Tyshkevich's characterization of the indecomposable unigraphs.  We refer readers to \cite{Ch25} where we show that  the distinguishing numbers of unigraphs can be computed in linear time.   %This allows us to prove  in \cite{} that the distinguishing numbers of unigraphs can be computed in linear time.   

%When $G$ is a unigraph, {\tt IsUnigraph($G$)} can be modified to determine the type of each component in the compact canonical decomposition of $G$.  Additionally, we showed in \cite{} how to compute the distinguishing numbers of the graphs in Tables \ref{UnsplittableTable} and \ref{SplittableTable} in linear time and  proved that $D(G) = D(\overline{G})$ and $D(G) = D(G^I)$ when $G$ is an indecomposable split graph.  

\begin{theorem}
{\bf \cite{Ch25}}
When $G$ is a unigraph, $D(G)$ can be computed in linear time.
\end{theorem}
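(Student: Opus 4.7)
The plan is to combine three ingredients already developed above: the linear-time decomposition produced by Decompose($G$) and IsUnigraph($G$), the compact canonical decomposition of Theorem \ref{canon2}, and the reduction $D(G) = \max_i D(G'_i)$ given by Theorem \ref{distthm}. First I would run IsUnigraph($G$), which in linear time both verifies that $G$ is a unigraph and returns the type and parameters of every component of Tyshkevich's canonical decomposition. A single pass over this list then collapses maximal runs of consecutive single-vertex components of identical type, producing the compact canonical decomposition $G = (G'_\ell,A'_\ell,B'_\ell)\circ\cdots\circ(G'_1,A'_1,B'_1)\circ G'_0$, in which each $G'_i$ is either an indecomposable unigraph with at least two vertices, a complete graph, or an edgeless graph.

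By Theorem \ref{distthm} the work then reduces to computing $D(G'_i)$ for each component. The key point is that every such component has a transparent automorphism group, namely a direct product of symmetric and wreath-product groups acting on the natural orbits of interchangeable vertices (the $\ell$ leaves of a $K_{1,\ell}$, the $q$ isomorphic stars inside $S(p,q)$, the two endpoints of each $K_2$ in $mK_2$, and so on). Consequently $D(G'_i)$ is given by a closed-form formula in the type parameters that can be evaluated in $O(1)$ once IsUnigraph reports the type. For instance $D(K_m) = D(\overline{K_m}) = m$, $D(C_5) = 3$, and $D(mK_2)$ equals the smallest $k$ with $\binom{k}{2} \ge m$, since every $K_2$ must receive two distinct colors and distinct copies must then be assigned distinct unordered color-pairs. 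Analogous counts cover $U_2$, $U_3$, $S(p,q)$, $S_2$, $S_3$ and $S_4$, and complements and inverses inherit the formula of their underlying type because the automorphism group is preserved under complementation and inversion.

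The main obstacle is this final step: for each type catalogued in Tables \ref{UnsplittableTable} and \ref{SplittableTable}, one must explicitly identify the full automorphism group, determine its orbits on the vertex set, and derive the correct closed-form expression for $D$, taking care that colorings across different orbits cannot create an unexpected symmetry. The complete case analysis is carried out in \cite{Ch25}. Once the formulas are in hand, the total cost is linear: a linear-time decomposition, $O(1)$ per component to evaluate $D(G'_i)$, and an $O(\ell)$ pass to extract the maximum.
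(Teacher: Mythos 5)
Your proposal is correct and follows essentially the same route as the paper: the paper itself defers the detailed case analysis to \cite{Ch25} and establishes the parallel result for $Fix(G)$ by exactly this template --- obtain the compact canonical decomposition and the type of each component in linear time via {\tt Decompose}/{\tt IsUnigraph}, reduce via Theorem \ref{distthm}, and read off each component's value from a per-type formula, with complements and inverses inheriting the value because they share the automorphism group. The sample formulas you give ($D(K_m)=m$, $D(C_5)=3$, and $D(mK_2)=\min\{k:\binom{k}{2}\ge m\}$) are all correct, so the only part left implicit --- the full per-type catalogue of distinguishing numbers --- is precisely the part the paper also leaves to \cite{Ch25}.
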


Let us now prove a similar result for the fixing numbers of unigraphs.   We start with some basic results. We then use them to determine the fixing numbers of the graphs in Tables \ref{UnsplittableTable} and \ref{SplittableTable}.

%The next proposition provides some fixing numbers.  

\begin{proposition}
\label{propbasic}
The following are true: 

\noindent (i) $Fix(C_n) = 2$ for $n \ge 3$, $Fix(P_n) = 1$ for $n \ge 2$,  $Fix(mK_2) = m$, $Fix(K_{1, \ell}) = \ell - 1$ when $\ell \ge 2$, $Fix(K_n) = n-1$. 

\noindent (ii) When $G$ is a rigid graph, $Fix(mG) = m-1$. 

\noindent (iii) When $G$ and $H$ are both non-rigid graphs, $Fix(G \cup H) = Fix(G) + Fix(H)$.
\end{proposition}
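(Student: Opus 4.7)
The plan is to prove the three items in the order (iii), (ii), (i), so that the earlier parts can be used when handling $mK_2$ in the last step.

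For (iii), the key observation is that any automorphism $\pi$ of $G \cup H$ either preserves the bipartition $(V(G), V(H))$ or exchanges the two parts, with the latter possible only when $G \cong H$. Let $S_G = S \cap V(G)$ and $S_H = S \cap V(H)$. A part-swapping $\pi$ cannot fix any vertex in $S_G$ or $S_H$; and since both $G$ and $H$ are non-rigid we have $Fix(G), Fix(H) \ge 1$, so when $|S| = Fix(G) + Fix(H)$ is realized by the union of minimum fixing sets of $G$ and $H$, both $S_G$ and $S_H$ are non-empty and no part-swap can fix $S$. Any part-preserving $\pi$ fixing $S$ restricts to automorphisms of $G$ and $H$ fixing $S_G$ and $S_H$, hence is the identity. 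This gives $Fix(G \cup H) \le Fix(G) + Fix(H)$. For the reverse inequality, if $|S \cap V(G)| < Fix(G)$, extend any non-trivial automorphism of $G$ fixing $S \cap V(G)$ by the identity on $H$; this is a non-trivial automorphism of $G \cup H$ fixing $S$. Hence $|S \cap V(G)| \ge Fix(G)$ and likewise for $H$, summing to the desired bound.

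For (ii), rigidity of $G$ makes $Aut(mG) \cong S_m$ acting by permutation of the $m$ copies. For the lower bound, if $|S| \le m-2$ then by pigeonhole two copies $G_i, G_j$ contain no vertex of $S$, and the transposition swapping them is a non-trivial automorphism fixing $S$. For the upper bound, select one vertex $v_i$ from each of $m-1$ copies; any automorphism fixing $\{v_1, \ldots, v_{m-1}\}$ must map each of these copies to itself (since $v_i$ pins down copy $i$), forcing the remaining copy to itself as well, and rigidity of $G$ makes the restriction to each copy the identity.

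Finally, for (i), each value follows from a short direct analysis of the relevant automorphism group. For $K_n$ and $K_{1,\ell}$, the automorphism group acts as the full symmetric group on the $n$ vertices (resp.\ the $\ell$ leaves, with the center always fixed as the unique max-degree vertex), and breaking $S_n$ requires fixing exactly $n-1$ points. For $P_n$, the sole non-trivial automorphism is the endpoint-reversing reflection, broken by fixing either endpoint. For $C_n$, $Aut(C_n) = D_n$: every single vertex is fixed by at least one reflection, so $Fix(C_n) \ge 2$, and choosing two adjacent vertices suffices since no reflection axis of $C_n$ passes through an edge. For $mK_2$, apply (iii) inductively with the immediate base case $Fix(K_2) = 1$, obtaining $Fix(mK_2) = m$. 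The main obstacle is the $C_n$ case: one must select the two fixing-set vertices so that no dihedral reflection stabilizes both, which fails for antipodal pairs when $n$ is even but works uniformly for adjacent pairs, so that choice is the cleanest.
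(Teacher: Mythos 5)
The paper states Proposition \ref{propbasic} without proof, so there is no argument of its own to compare against; judged on its merits, your proposal has a genuine gap in part (iii), and the gap propagates into your treatment of $mK_2$ in part (i). The claim that every automorphism of $G \cup H$ either preserves the bipartition $(V(G),V(H))$ or exchanges the two parts is valid only when $G$ and $H$ are connected. In general an automorphism of a disjoint union permutes connected components individually, so when $G$ and $H$ share isomorphic components it can move some components of $G$ into $H$ and others not, which is neither of your two cases. This is exactly the situation you invoke when applying (iii) inductively with $G=(m-1)K_2$ and $H=K_2$: an automorphism of $mK_2$ may carry the edge of $H$ onto any edge of $G$, so your upper-bound argument (take the union of minimum fixing sets and rule out part-preserving and part-swapping maps) does not cover all automorphisms. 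In fact, read literally, statement (iii) is false: take $G=H=K_1\cup K_2$, both non-rigid with $Fix(G)=Fix(H)=1$; then $G\cup H=2K_1\cup 2K_2$ has fixing number $1+2=3$, not $2$, and the union of minimum fixing sets (one endpoint of each edge) fails to break the swap of the two isolated vertices. Both the statement and your proof need the additional hypothesis that no component of $G$ is isomorphic to a component of $H$, which does hold in every place the paper uses (iii), namely $mK_2\cup K_{1,\ell}$ with $\ell\ge 2$ and $mK_2\cup P_3$. Your lower-bound direction for (iii) is correct as written.

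Two smaller points. In (ii), the identification $Aut(mG)\cong S_m$ requires $G$ to be connected as well as rigid; a disconnected rigid $G$ (say $K_1$ together with a rigid connected graph on at least two vertices) gives $Aut(mG)\cong S_m\times S_m$ and $Fix(mG)=2(m-1)$. With connectivity assumed, your pigeonhole lower bound and one-vertex-per-copy upper bound are correct, and connected $G$ is all the paper ever needs for (ii). Finally, $Fix(mK_2)=m$ should be established directly (mark one endpoint of each edge: any automorphism fixing these marks must fix each edge setwise and hence pointwise, while with fewer than $m$ marks some edge is unmarked and its internal swap survives) rather than routed through the flawed instance of (iii). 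Your arguments for $C_n$, $P_n$, $K_{1,\ell}$ and $K_n$ in (i) are correct.
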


\smallskip
\begin{lemma}
The fixing numbers of the graphs  in Table \ref{UnsplittableTable2} are correct. 
\end{lemma}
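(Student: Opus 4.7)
The plan is to verify the four rows of Table \ref{UnsplittableTable2} individually, appealing to Proposition \ref{propbasic} wherever it applies and reducing $U_3(m)$ to a short automorphism analysis. The rows for $C_5$ and $mK_2$ are immediate from Proposition \ref{propbasic}(i). For $U_2(m,\ell) = mK_2 \cup K_{1,\ell}$ with $m \ge 1$ and $\ell \ge 2$, both $mK_2$ and $K_{1,\ell}$ are non-rigid, so Proposition \ref{propbasic}(iii) combined with (i) yields $Fix(U_2(m,\ell)) = m + (\ell - 1)$. Since $Aut(G) = Aut(\overline{G})$ always, every complement entry in the table inherits the value of the corresponding graph, so nothing extra needs to be checked for the complements.

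The main obstacle is $U_3(m)$. First I would pin down the orbits of $Aut(U_3(m))$ by a degree-and-adjacency argument. The center $c$ is the unique vertex of degree $2m+2$ (all others have degree $2$), so every automorphism fixes $c$. The middle vertex $w$ of the pendant $P_3$ is then the unique degree-$2$ vertex not adjacent to $c$, so $w$ is also fixed. The remaining vertices partition into $\{a,b\}$, the two common neighbors of $c$ and $w$, and the $2m$ vertices of the $mK_2$ block, which are adjacent to $c$ but not to $w$. Both of these sets are preserved setwise by any automorphism, and within the $mK_2$ block each matched pair is preserved because the two partners of a $K_2$ are each other's unique neighbor outside $\{c\}$.

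With the structure in hand, I would show both bounds. For the lower bound, any fixing set $S$ must contain at least one vertex of $\{a,b\}$, since otherwise the involution swapping $a$ with $b$ (and fixing everything else) is a non-trivial automorphism preserving $S$. Likewise $S$ must contain at least one vertex from each copy of $K_2$, else we can swap the two endpoints of that copy. Hence $|S|\ge m+1$. Conversely, the set consisting of $a$ together with one endpoint from each $K_2$ is a fixing set: $a\in S$ forces $b\mapsto b$, and each chosen $K_2$-endpoint forces its partner to stay put. Therefore $Fix(U_3(m))=m+1$, which completes the table.

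The one step I expect to be delicate is the bookkeeping for $U_3(m)$ when $m=1$, since then the $mK_2$ block has only two vertices and the ``center + $P_3$ + $K_2$'' layering must be checked by hand to confirm that $c$ and $w$ are still forced and that the $\{a,b\}$ pair is still distinguishable from the $K_2$-pair; this follows because $a,b$ are adjacent to $w$ while the $K_2$-vertices are not, so the argument above goes through uniformly for all $m\ge 1$.
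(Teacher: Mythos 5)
Your proposal is correct and follows essentially the same route as the paper: the rows for $C_5$, $mK_2$ and $U_2(m,\ell)$ are read off from Proposition \ref{propbasic}, and for $U_3(m)$ both arguments hinge on the center being the unique vertex of degree exceeding $2$, after which the problem reduces to fixing the remaining $mK_2 \cup P_3$. The only difference is that you re-derive $Fix(mK_2 \cup P_3) = m+1$ by an explicit orbit analysis with matching lower and upper bounds, whereas the paper simply cites Proposition \ref{propbasic}(i) and (iii) to get $Fix(mK_2)+Fix(P_3)=m+1$.
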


\begin{proof}
The fixing numbers for $C_5$, $mK_2$ and $U_2(m, \ell) = mK_2 \cup K_{1, \ell}$ follow directly from Proposition \ref{propbasic}.  For $U_3(m)$, the center vertex has degree at least 4 while all other vertices have degree 2 so   every automorphism of $U_3(m)$ must map the center vertex to itself. Thus, a fixing set for $U_3(m)$ is effectively fixing $mK_2 \cup P_3$. According to Proposition \ref{propbasic}, $Fix(mK_2 \cup P_3) = Fix(mK_2) + Fix(P_3) = m + 1$ so $Fix(U_3(m)) = m+1$. 
\end{proof}

\begin{table}[t]
 \begin{center}
\small
%\label{UnsplittableTable}
 \begin{tabular}{|>{\centering\arraybackslash}m{1.5in}|>{\centering\arraybackslash}m{2.5in}|>{\centering\arraybackslash}m{1in}|}
 \hline
{Indecomposable Non-split Unigraphs $G$} & Minimum-sized fixing set &  ${Fix(G)}$  \\
\hline
$C_5$ & 
%\begin{center}
\vspace{1em}
\begin{tikzpicture}
    % Define vertices
    \node[circle,draw,fill=white] (v1) at (90:1) {};    % Top
    \node[circle,draw,fill=gray] (v2) at (162:1) {};   % Top left
    \node[circle,draw,fill=white] (v3) at (234:1) {};   % Bottom left
    \node[circle,draw,fill=white] (v4) at (306:1) {};   % Bottom right
    \node[circle,draw,fill=gray] (v5) at (18:1) {};  
    % Draw edges
    \draw[-] (v1) -- (v2);
    \draw[-] (v2) -- (v3);
    \draw[-] (v3) -- (v4);
    \draw[-] (v4) -- (v5);
    \draw[-] (v5) -- (v1);
\end{tikzpicture}
%\end{center}
  & 2 \\
\hline
$mK_2$ with $m \ge 2$ &

\vspace{1em}
\begin{tikzpicture}
    % First K2
    \node[circle,draw,fill=white] (a1) at (0,1) {};
    \node[circle,draw,fill=gray] (b1) at (0,0) {};
    \draw[-] (a1) -- (b1);
    
    % Second K2
    \node[circle,draw,fill=white] (a2) at (0.75,1) {};
    \node[circle,draw,fill=gray] (b2) at (0.75,0) {};
    \draw[-] (a2) -- (b2);
    
    \node at (1.5,0.5) {$\cdots$};
    % Third K2
    \node[circle,draw,fill=white] (a3) at (2.25,1) {};
    \node[circle,draw,fill=gray] (b3) at (2.25,0) {};
    \draw[-] (a3) -- (b3);
    
    % Curly brace underneath
    \draw[-][decorate,decoration={brace,mirror,amplitude=10pt}]
        ([yshift=-5pt]b1.south west) -- ([yshift=-5pt]b3.south east)
        node[midway,yshift=-15pt]{$m$};
\end{tikzpicture} &  $m$ \\
\hline
$U_2(m,\ell) = mK_2 \cup K_{1,\ell}$   with $m \ge 1, \ell \ge 2$
& 
\vspace{1em}
\begin{tikzpicture}
    % First K2
    \node[circle,draw,fill=white] (a1) at (0,1) {};
    \node[circle,draw,fill=gray] (b1) at (0,0) {};
    \draw[-] (a1) -- (b1);
    % Second K2
    \node[circle,draw,fill=white] (a2) at (0.75,1) {};
    \node[circle,draw,fill=gray] (b2) at (0.75,0) {};
    \draw[-] (a2) -- (b2);
    % middle dots
    \node at (1.4,0.5) {$\cdots$};
    % Third K2
    \node[circle,draw,fill=white] (a3) at (2.0,1) {};
    \node[circle,draw,fill=gray] (b3) at (2.0,0) {};
    \draw[-] (a3) -- (b3);   
    % Curly brace underneath
    \draw[-][decorate,decoration={brace,mirror,amplitude=10pt}]
        ([yshift=-5pt]b1.south west) -- ([yshift=-5pt]b3.south east)
        node[midway,yshift=-15pt]{$m$};
    %   K1\ell
      \node[circle,draw,fill=white] (a4) at (4,1) {};
      \node[circle,draw,fill=gray] (d1) at (3,0) {};
      \draw[-] (a4) -- (d1);
       \node[circle,draw,fill=gray] (d2) at (3.5,0) {};
       \draw[-] (a4) -- (d2);
       \node[circle,draw,fill=gray] (d3) at (4.5,0) {};
       \draw[-] (a4) -- (d3);
       \node[circle,draw,fill=white] (d4) at (5.0,0) {};
       \draw[-] (a4) -- (d4);
       \node at (4.05,0) {$\cdots$};
      \draw[-][decorate,decoration={brace,mirror,amplitude=10pt}]
        ([yshift=-5pt]d1.south west) -- ([yshift=-5pt]d4.south east)
        node[midway,yshift=-15pt]{$\ell$};
\end{tikzpicture}
 &  $m+ \ell -1$  \\
\hline
$U_3(m)$, $m \ge 1$ &  
\vspace{1em}
\begin{tikzpicture}[scale=1]
    % Center vertex
 %   \node[circle,draw,fill=white] (v) at (0,0) {$v$};

    % 4-cycle on the left
    \node[circle,draw,fill= white] (a) at (-2,1) {};
    \node[circle,draw,fill=white] (b) at (-3,0) {};
    \node[circle,draw,fill=gray] (c) at (-2,-1) {};
    \node[circle,draw,fill=white] (d) at (-1,0) {};
    
    % Edges of the 4-cycle
    \draw[-] (a) -- (b) -- (c) -- (d) -- (a);

    % Connect v to the 4-cycle
 %   \draw[-](v) -- (a);
  %  \draw[-](v) -- (c);

    % Triangles on the right
    \node[circle,draw,fill=gray] (t1) at (-0.8,1.2) {};
    \node[circle,draw,fill=white] (t2) at (0,0.7) {};
    \node[circle,draw,fill=gray] (t3) at (0,-0.7) {};
    \node[circle,draw,fill=white] (t4) at (-0.8,-1.2) {};

    \node at (0.2,0.3) {$\vdots$};
    \node at (0.2,-0.2) {$\vdots$};
     % Triangles (each triangle: v, ti, ti+1)
    \draw[-] (d) -- (t1) -- (t2) -- (d);
    \draw[-] (d) -- (t3) -- (t4) -- (d);

    % Curly brace indicating many triangles
    \draw[-][decorate,decoration={brace,amplitude=10pt}]
        ([xshift=30pt]t1.east) -- ([xshift=30pt]t4.east)
        node[midway,xshift=18pt] {$m$};

\end{tikzpicture}
& $m+1$  \\
\hline
\end{tabular} 
 \caption{The four types of indecomposable non-split unigraphs, their minimum-sized fixing sets marked in gray and their fixing numbers.}
 \label{UnsplittableTable2}
 \end{center}
 \end{table}

\begin{lemma}
 The fixing numbers of  the graphs in Table \ref{SplittableTable2} are correct. 
%a single-vertex graph, $S(p,q)$, $S_2(p_1, q_1, \hdots, p_m, q_m)$, $S_3(p, q_1, q_2)$ and  $S_4(p,q)$ as stated in Table \ref{Splittable} are correct. 
 \end{lemma}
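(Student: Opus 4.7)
The plan is to verify the entries of Table \ref{SplittableTable2} family by family via orbit-analysis of $\mathrm{Aut}(G)$, giving matching upper and lower bounds in each case. The single-vertex case is immediate ($Fix = 0$), so I focus on the four nontrivial families $S(p,q)$, $S_2(p_1,q_1,\ldots,p_m,q_m)$, $S_3(p,q_1,q_2)$, and $S_4(p,q)$.

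For $S(p,q)$ I identify $\mathrm{Aut}(S(p,q)) \cong S_p \wr S_q$: centers and leaves sit in different orbits by degree, the $q$ stars are mutually interchangeable, and the $p$ leaves of each star are interchangeable. When $p \ge 2$, individualizing $p-1$ leaves per star simultaneously breaks the intra-star $S_p$-action and distinguishes the $q$ stars through their ``fixed-leaf palettes,'' producing the displayed fixing set of size $q(p-1)$. When $p = 1$ the intra-star contribution vanishes and one must instead individualize $q-1$ centers (equivalently leaves) to break the $S_q$-action on stars, yielding $q-1$. The matching lower bounds follow by checking that removing any element from the proposed fixing set exposes an explicit nontrivial automorphism (a leaf-transposition within a star, or a full star-swap).

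For $S_2$, the strict inequalities $p_1 > \cdots > p_m$ force the centers of distinct blocks to have distinct degrees, so the blocks are $\mathrm{Aut}$-invariant and the analysis factors block-by-block, summing $S(p_i,q_i)$-style contributions. For $S_3(p,q_1,q_2)$ and $S_4(p,q)$, the new ingredient is the special vertex $e$ (and $f$ in $S_4$). Using the paired degree sequences in Table \ref{SplittableTable}, I verify that $e$ has a unique degree in $S_3$ (namely $q_1$, strictly between the leaf-degree $1$ and the center-degree $p+q_1+q_2$, since $q_1 \ge 2$ and $p, q_2 \ge 1$), and that $f$ has a unique degree in $S_4$; the vertex $e$ in $S_4$ coincidentally shares degree $2$ with the leaves but remains in its own orbit because its neighbor-degree multiset differs from that of every leaf. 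Thus $e$ (and $f$) are individually fixed by every automorphism at no cost. Once they are pinned, adjacency to $e$ separates the two center-groups of $S_3$ (respectively of $S_4$), so the residual analysis reduces to independent $S(p,q_1)$ and $S(p+1,q_2)$ computations (respectively $S(p,2)$ and $S(p+1,q)$ for $S_4$), whose fixing numbers add.

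The main obstacle is the case split $p=1$ versus $p \ge 2$, which surfaces independently within each family and causes the formulas to change character: when $p=1$ the within-star contribution collapses to zero and must be replaced by an inter-star contribution. Keeping this bookkeeping straight in $S_3$ (where the two blocks have sizes $p$ and $p+1$, so at most one of them can have $p_i = 1$) and in $S_2$ (where similarly at most one block has $p_m = 1$) requires care, and the entries in Table \ref{SplittableTable2} will therefore be piecewise. The lower-bound verifications are otherwise routine given the explicit orbit descriptions.
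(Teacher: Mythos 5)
Your proposal is correct and follows essentially the same route as the paper: reduce $S(p,q)$ to a disjoint union of rooted stars (splitting on $p=1$ versus $p\ge 2$), use the distinct center-degrees to make the blocks of $S_2$ invariant, and observe that $e$ and $f$ are fixed by every automorphism so that $S_3$ and $S_4$ reduce to sums of $Fix(S(\cdot,\cdot))$ values. The only difference is that you spell out the wreath-product structure and the matching lower bounds explicitly, whereas the paper delegates those to its Proposition on fixing numbers of disjoint unions.
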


\begin{proof}
Let $K^*_{1, p}$ be $K_{1,p}$ rooted at its center.\footnote{Note that when $p \ge 2$. $Aut(K^*_{1,p}) = Aut(K_{1, p})$.  Thus, we only really need $K^*_{1,p}$ when $p = 1$.} Every automorphism of $S(p,q)$ maps a center of $K_{1,p}$ to itself or to the other centers.  Thus, we can simply think of $S(p,q)$ as $qK^*_{1, p}$.  When $p = 1$,  $K^*_{1, p}$ is a rigid graph so  $Fix(qK^*_{1,p}) = q-1$ by Proposition \ref{propbasic}.  But when $p > 1$, $K^*_{1, p}$ is just $K_{1,p}$ and no longer a rigid graph.  Thus, $Fix(qK^*_{1,p}) =  q Fix(K_{1,p})= q(p-1)$ by Proposition \ref{propbasic}.

For $S_2(p_1, q_1, p_2, q_2, \cdots, p_m, q_m)$,  an automorphism of the graph will only map the center of one star to the center of another star if and only if  they have the same number of leaves.  Thus, we can just focus on fixing each $S(p_i, q_i)$ for $i = 1, \hdots, m$.   For $S_3(p, q_1, q_2)$,  every automorphism of the graph will map $e$ to itself so we can just fix the subgraph $S(p, q_1) \cup S(p+1, q_2)$. Finally, for $S_4(p,q)$, every automorphism of the graph will fix $e$ and $f$ so we can just fix $S(p,2)$ and $S(p+1,q)$.   Their fixing numbers follow from our discussion. 
\end{proof}

Now that we know the fixing numbers of the indecomposable unigraphs in Tables \ref{UnsplittableTable} and \ref{SplittableTable},  we need to also consider the fixing numbers of their complements, inverses and complements of their inverses.

\begin{table}
 \small
\label{Splittable}
 \begin{tabular}{|>{\centering\arraybackslash}m{1.8in}|>{\centering\arraybackslash}m{3.2in}|>{\centering\arraybackslash}m{1.3in}|}
 \hline
{Indecomposable Split Unigraphs $G$ with $|V(G)| \ge 2$} & Minimum-sized fixing set &  ${Fix(G)}$  \\
% \hline
%Single-vertex graph & 
%\vspace*{1em}
%\begin{tikzpicture}
%    \node[circle,draw,fill=white] (a1) at (0,1) {};
%\end{tikzpicture} & $0$\\
\hline
$S(p,q)$, $p \ge 1$, $q \ge 2$ & 
\vspace*{1em}

\begin{tikzpicture}
%     First K2
   \node[circle,draw,fill=white] (a1) at (0,1) {};
    \node[circle,draw,fill=gray] (b1) at (0,0) {};
    \draw[-] (a1) -- (b1);
    % Second K2
    \node[circle,draw,fill=white] (a2) at (0.75,1) {};
    \node[circle,draw,fill=gray] (b2) at (0.75,0) {};
    \draw[-] (a2) -- (b2);
    \node at (1.5,0.5) {$\cdots$};
    % Third K2
    \node[circle,draw,fill=white] (a3) at (2.25,1) {};
    \node[circle,draw,fill=gray] (b3) at (2.25,0) {};
    \draw[-] (a3) -- (b3);
    %Fourth K2
    \node[circle,draw,fill=white] (a4) at (3,1) {};
    \node[circle,draw,fill=white] (b4) at (3,0) {};
    \draw[-] (a4) -- (b4);
    
    \draw[-] (a1) -- (a2);
    \draw[-] (a3) -- (a4);
    \draw[dashed,-] (a2) -- (a3);
      \draw[-] (a1) to[out=35,in=145] (a4);
       \draw[-] (a1) to[out=35,in=145] (a3);
     \draw[-] (a2) to[out=25,in=155] (a4); 
    % Cur[-]ly brace underneath
    \draw[-][decorate,decoration={brace,mirror,amplitude=10pt}]
        ([yshift=-5pt]b1.south west) -- ([yshift=-5pt]b4.south east)
        node[midway,yshift=-15pt]{$q$};
\end{tikzpicture}
\begin{tikzpicture}  
 \node[circle,draw,fill=white] (a4) at (4,1) {};
    %  \node[circle,draw,fill=gray] (d1) at (3,0) {};
     % \draw[-](a4) -- (d1);
       \node[circle,draw,fill=gray] (d2) at (3.25,0) {};
       \draw[-](a4) -- (d2);
       \node[circle,draw,fill=gray] (d3) at (4.25,0) {};
       \draw[-](a4) -- (d3);
       \node[circle,draw,fill=white] (d4) at (4.75,0) {};
       \draw[-](a4) -- (d4);
       \node at (3.8,0) {$\cdots$};
      \draw[-][decorate,decoration={brace,mirror,amplitude=10pt}]
        ([yshift=-5pt]d2.south west) -- ([yshift=-5pt]d4.south east)
        node[midway,yshift=-15pt]{$p$};

 \node[circle,draw,fill=white] (b4) at (6,1) {};
    %  \node[circle,draw,fill=gray] (d1) at (3,0) {};
     % \draw[-](a4) -- (d1);
       \node[circle,draw,fill=gray] (e2) at (5.25,0) {};
       \draw[-](b4) -- (e2);
       \node[circle,draw,fill=gray] (e3) at (6.25,0) {};
       \draw[-](b4) -- (e3);
       \node[circle,draw,fill=white] (e4) at (6.75,0) {};
       \draw[-](b4) -- (e4);
       \node at (5.8,0) {$\cdots$};
      \draw[-][decorate,decoration={brace,mirror,amplitude=10pt}]
        ([yshift=-5pt]e2.south west) -- ([yshift=-5pt]e4.south east)
        node[midway,yshift=-15pt]{$p$};
        
  \draw[-](a4) -- (b4);    
    \node at (7.25,0.5) {$\cdots$};   
    \node at (7.75, 0.5)  {$\cdots$};  
    
     \node[circle,draw,fill=white] (c4) at (9,1) {};
    %  \node[circle,draw,fill=gray] (d1) at (3,0) {};
     % \draw[-](a4) -- (d1);
       \node[circle,draw,fill=gray] (f2) at (8.25,0) {};
       \draw[-](c4) -- (f2);
       \node[circle,draw,fill=gray] (f3) at (9.25,0) {};
       \draw[-](c4) -- (f3);
       \node[circle,draw,fill=white] (f4) at (9.75,0) {};
       \draw[-](c4) -- (f4);
       \node at (8.8,0) {$\cdots$};
      \draw[-][decorate,decoration={brace,mirror,amplitude=10pt}]
        ([yshift=-5pt]f2.south west) -- ([yshift=-5pt]f4.south east)
        node[midway,yshift=-15pt]{$p$};
 %  \draw[dashed] (b4) -- (c4);
   \draw[-](a4) to[out=25,in=155] (c4);
     \draw[-](b4) to[out=15,in=165] (c4);
      \draw[-][decorate,decoration={brace, mirror, amplitude=10pt}]
        ([yshift=-25pt]d2.south west) -- ([yshift=-25pt]f4.south east)
node[midway,yshift=-16pt]{$q$};
\end{tikzpicture}
& $q-1$ when $p=1$;  
\vspace*{1in}

$q(p-1)$  otherwise\\
\hline
$S_2(p_1, q_1, \hdots, p_m, q_m)$,  \hspace*{0.03in} $m \ge 2$ and $p_1 > p_2 > \hdots > p_m \ge 1$ & 
\vspace{1em}
\begin{tikzpicture}
\fill[gray!30] 
        (0,0) -- (2,0) -- (1.5,1.5) -- (0.5,1.5) -- cycle;
    % Now draw the edges on top
    \draw[-](0,0) -- (2,0) -- (1.5,1.5) -- (0.5,1.5) -- cycle;
     \node at (1,-0.5) {$S(p_1, q_1)$};
\fill[gray!30] 
        (2.25,0) -- (4.25,0) -- (3.75,1.5) -- (2.75,1.5) -- cycle;
    % Now draw the edges on top
 \draw[-](2.25,0) -- (4.25,0) -- (3.75,1.5) -- (2.75,1.5) -- cycle;
 \node at (3.25,-0.5) {$S(p_2, q_2)$};   
 \node at (4.65,0.75) {$\cdots$};
  \node at (5.2,0.75) {$\cdots$};   
 \fill[gray!30] 
        (5.5,0) -- (7.5,0) -- (7.0,1.5) -- (6.0,1.5) -- cycle;
    % Now draw the edges on top
 \draw[-](5.5,0) -- (7.5,0) -- (7.0,1.5) -- (6.0,1.5) -- cycle;
 \node at (6.5,-0.5) {$S(p_m, q_m)$};  
 
 \node (z1) at (1.0, 1.5) {};
 \node (z2) at (3.25, 1.5){};
 \node (z3) at (6.5, 1.5){};
 \draw[-](z1) to[out=25,in=155] (z3);
 \draw[-](z1) to[out=15,in=165] (z2);
  \draw[-](z2) to[out=15,in=165] (z3);
 \end{tikzpicture}
     & $\sum_{i=1}^m Fix(S(p_i, q_i))$ \\
%\min \{c: c\binom{c}{p_i} \ge q_i,$ $i = 1, \hdots, m \}$ \\
\hline
$S_3(p, q_1, q_2)$ with $p \ge 1, q_1 \ge 2, q_2 \ge 1$
%the union of $m$ copies of $K_2$ and the star $K_{1, \ell}$ 
&  \vspace{1em}
\begin{tikzpicture}
\fill[gray!30] 
        (0,0) -- (2,0) -- (1.5,1.5) -- (0.5,1.5) -- cycle;
    % Now draw the edges on top
    \draw[-](0,0) -- (2,0) -- (1.5,1.5) -- (0.5,1.5) -- cycle;
     \node at (1,-0.5) {$S(p, q_1)$};
 \node[circle,draw,fill=white]  (e) at (2.5,0) {$e$};
 \node (z1) at (0.6, 1.5) {};
 \node (z2) at (0.9, 1.5){};
 \node (z3) at (1.2, 1.5){};
 \draw[-](e) -- (z1);
  \draw[-](e) -- (z2);
  \draw[-](e) -- (z3);
 \fill[gray!30] 
        (3,0) -- (5,0) -- (4.5,1.5) -- (3.5,1.5) -- cycle;
    % Now draw the edges on top
    \draw[-](3,0) -- (5,0) -- (4.5,1.5) -- (3.5,1.5) -- cycle;
     \node at (4,-0.5) {$S(p+1, q_2)$};
   \node (z4) at (4, 1.5){};
   \draw[-](z2) to[out=25,in=155] (z4);
\end{tikzpicture}
%$((p + q_1 + q_2)^{q_1 + q_2}; q_1,$ $1^{pq_1 + (p+1)q_2})$ & 
& $Fix(S(p, q_1)) +$  \hspace*{2em}$Fix(S(p+1, q_2))$  \\
\hline
$S_4(p,q)$ with $p \ge 1$, $q \ge 1$ & 
\vspace{1em}
\begin{tikzpicture}
\fill[gray!30] 
        (0,0) -- (2,0) -- (1.5,1.5) -- (0.5,1.5) -- cycle;
    % Now draw the edges on top
    \draw[-](0,0) -- (2,0) -- (1.5,1.5) -- (0.5,1.5) -- cycle;
     \node at (1,-0.5) {$S(p,2)$};
 \node[circle,draw,fill=white]  (e) at (2.5,0) {$e$};
  \node[circle,draw,fill=white]  (f) at (2.5,1.5) {$f$};
 \node (z1) at (0.6, 1.4) {};
 \node (z2) at (0.9, 1.4){};
 \node (z3) at (1.2, 1.4){};
 \node (z4) at (0.3,0){};
 \node (z5) at (0.6,0){};
 \node (z6) at (0.9,0){};
 \node (z7) at (1.2,0){};
 \draw[-](e) -- (z1);
  \draw[-](e) -- (z2);
%  \draw[-](e) -- (z3);
   \draw[-](z1) to[out=25,in=155]   (f); 
  \draw[-](z2) to[out=20,in=160]  (f);  
  \draw[-](z3) to[out=15,in=165]  (f); 
   \draw[-](z4) -- (f);  
  \draw[-](z5) -- (f); 
   \draw[-](z6) -- (f);  
  \draw[-](z7) -- (f); 
  
 \fill[gray!30] 
        (3,0) -- (5,0) -- (4.5,1.5) -- (3.5,1.5) -- cycle;
    % Now draw the edges on top
    \draw[-](3,0) -- (5,0) -- (4.5,1.5) -- (3.5,1.5) -- cycle;
     \node at (4,-0.5) {$S(p+1, q)$};
   \node (z4) at (4, 1.5){};
   \draw[-](z2) to[out=35,in=145] (z4);
   \node (y1) at (3.9, 1.4) {};
 \node (y2) at (4.2, 1.4){};
 \node (y3) at (4.5, 1.4){};
 \node (y4) at (3.5,0){};
 \node (y5) at (3.8,0){};
 \node (y6) at (4.2,0){};
\node (y7) at (4.5,0){};
    \draw[-](f) to[out=15,in=165]  (y1); 
  \draw[-](f) to[out=20,in=160]  (y2);  
  \draw[-](f) to[out=25,in=155]  (y3); 
    \draw[-](y4) -- (f); 
  \draw[-](y5) -- (f);  
  \draw[-](y6) -- (f); 
   \draw[-](y7) -- (f); 
\end{tikzpicture}
%$(2(p+q+1)+qp, (p+q+3)^{q+2}; 2^{qp + 2p + q + 1})$ 
& $Fix(S(p,2)) +$ \hspace*{2em}$Fix(S(p+1,q))$  \\
%There is a vertex $v$ so $v$ is one of the vertices of a chordless $C_4$.  It is also one of the vertices of $m$ copies of $C_3$. 
\hline
\end{tabular} 
 \caption{The four types of indecomposable non-split graphs with two or more vertices and their fixing numbers. The shaded vertices in the second column also show the vertices that should be chosen for a fixing set.}  
% The $A$-parts of the split graphs are on top while the $B$-parts are at the bottom.}
 \label{SplittableTable2}
 \end{table}

\begin{proposition}
\label{propequalaut}
Let $G$ and $H$ be two graphs with the same set of vertices; i.e. $V(G) = V(H)$.  Furthermore,  $Aut(G) = Aut(H)$.  Then every fixing set of $G$ is a fixing set of $H$ and vice versa so $Fix(G) = Fix(H)$.
\end{proposition}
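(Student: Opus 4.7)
The plan is to unwind the definition of a fixing set and observe that the definition depends only on the action of $Aut(G)$ on $V(G)$, not on the edge set of $G$ itself. Since the hypotheses give us $V(G) = V(H)$ and $Aut(G) = Aut(H)$, the two groups act identically on the same vertex set, and therefore any statement of the form ``the only element fixing every vertex in $S$ is the identity'' will transfer verbatim from $G$ to $H$.

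Concretely, first I would let $S \subseteq V(G) = V(H)$ be a fixing set of $G$ and pick an arbitrary $\sigma \in Aut(H)$ with $\sigma(s) = s$ for every $s \in S$. Because $Aut(H) = Aut(G)$, we have $\sigma \in Aut(G)$, and since $S$ fixes $G$, $\sigma$ must be the identity map. This shows $S$ is a fixing set of $H$. The reverse inclusion is the same argument with the roles of $G$ and $H$ swapped. Hence the families of fixing sets of $G$ and of $H$ coincide set-for-set, and in particular their minimum sizes are equal, giving $Fix(G) = Fix(H)$.

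There is essentially no obstacle here; the proposition is a direct translation of the definition once one notes that fixing sets are defined purely in terms of the automorphism group's action on the vertex set. The only care needed is the identification $V(G) = V(H)$, which lets us speak of the \emph{same} subset $S$ serving as a candidate fixing set in both graphs, and the identification $Aut(G) = Aut(H)$, which lets us port any automorphism that fixes $S$ from one graph to the other. No appeal to Tyshkevich's decomposition or to Proposition \ref{propbasic} is required.
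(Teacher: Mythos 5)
Your proof is correct and is essentially the same as the paper's: both arguments take an automorphism of $H$ fixing $S$ pointwise, transfer it to $Aut(G)$ via the hypothesis $Aut(G)=Aut(H)$, and conclude it is the identity (the paper phrases this as a proof by contradiction, you argue directly, but the content is identical). No issues.
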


\begin{proof}
Assume $S$ is a fixing set of $G$ but not of $H$.  Then some $\pi \in Aut(H)$ has the property that $\pi$ assigns each elements in $S$ to itself but for some $v \in \overline{S}$, $\pi(v) \neq v$.   Since $Aut(H) = Aut(G)$, it follows that $\pi \in Aut(G)$ and behaves the same way in $G$, contradicting the fact that $S$ is a fixing set of $G$.   Hence, $S$ is a fixing set of $H$.  The same argument shows that every fixing set of $H$ is also a fixing set of $G$.  Consequently, $Fix(G) = Fix(H)$.  
\end{proof}

\smallskip

% $\pi \neq id$.  Since $Aut(H) = Aut(G)$, it follows that $\pi \in Aut(G)$ so  $S$ cannot be a fixing set of $G$, a contradiction.  Hence, $S$ is a fixing set of $H$.  The same argument shows that every fixing set of $H$ is also a fixing set of $G$.  Consequently, $Fix(G) = Fix(H)$.  \end{proof}

\begin{proposition}
For any graph $G$, $Fix(G) = Fix(\overline{G})$.  Additionally, when $G$ is an indecomposable split graph, $Fix(G) = Fix(G^I)$.
\label{equaldistnums}
\end{proposition}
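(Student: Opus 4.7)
The plan is to reduce both equalities to applications of Proposition \ref{propequalaut}. For the first claim, I would start by observing that $V(G) = V(\overline{G})$ and that a bijection $\pi: V(G) \to V(G)$ preserves the adjacencies of $G$ if and only if it preserves the non-adjacencies of $G$, which is exactly the condition that it preserves the adjacencies of $\overline{G}$. Hence $Aut(G) = Aut(\overline{G})$, and Proposition \ref{propequalaut} immediately gives $Fix(G) = Fix(\overline{G})$.

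For the second claim, the goal is again to establish $Aut(G) = Aut(G^I)$ (noting that $V(G) = V(G^I)$) and then invoke Proposition \ref{propequalaut}. The single-vertex case is trivial, since both fixing numbers are $0$. So assume $|V(G)| \ge 2$. By Proposition \ref{indecomsplit}, $G$ is balanced with unique $KS$-partition $(A,B)$, so by Proposition \ref{balanced}, every automorphism of $G$ setwise fixes $A$ and $B$; that is, $Aut(G) = Aut(G, A, B)$. The main obstacle is obtaining the analogous statement $Aut(G^I) = Aut(G^I, B, A)$, which amounts to showing that $G^I$, as a split graph with $KS$-partition $(B,A)$, is also balanced.

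To verify that $G^I$ is balanced, I would argue that it has no swing vertex. Note that the cross edges between $A$ and $B$ are identical in $G$ and $G^I$ by definition of the inverse operation. A candidate swing vertex $u \in B$ of $G^I$ (where $B$ is now the clique) would have no neighbors in $A$ in $G^I$; but then $u$ would also have no neighbors in $A$ in $G$, and since $B$ is a stable set in $G$, $u$ would be isolated in $G$. Similarly, a candidate swing vertex $v \in A$ of $G^I$ would be adjacent to all of $B$ in $G^I$, hence to all of $B$ in $G$, and because $A$ is a clique in $G$, $v$ would be a dominant vertex of $G$. By Proposition \ref{decomposable}, either situation would make $G$ decomposable, contradicting the hypothesis. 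Hence $G^I$ has no swing vertex, is balanced, and satisfies $Aut(G^I) = Aut(G^I, B, A)$.

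To finish, I would observe that a partition-preserving bijection $(\pi_A, \pi_B)$ on $A \cup B$ lies in $Aut(G, A, B)$ if and only if it preserves the bipartite edge set between $A$ and $B$, if and only if it lies in $Aut(G^I, B, A)$: edges within $A$ and edges within $B$ are trivially preserved because each is either fully present or fully absent in both $G$ and $G^I$. Therefore $Aut(G, A, B) = Aut(G^I, B, A)$, which combined with the two balanced-graph identities yields $Aut(G) = Aut(G^I)$, and Proposition \ref{propequalaut} delivers $Fix(G) = Fix(G^I)$.
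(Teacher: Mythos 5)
Your proposal is correct, and its overall skeleton matches the paper's: establish $Aut(G) = Aut(\overline{G})$ and $Aut(G) = Aut(G^I)$, then invoke Proposition \ref{propequalaut}. The difference is in how the second automorphism-group identity is obtained. The paper simply cites Proposition 2.19 of \cite{Ch25} for $Aut(G) = Aut(G^I)$ when $G$ is an indecomposable split graph, whereas you prove it from scratch using only ingredients available in this paper: Proposition \ref{indecomsplit} to get that $G$ is balanced with unique $KS$-partition $(A,B)$ and hence $Aut(G) = Aut(G,A,B)$ via Proposition \ref{balanced}; the Hammer--Simeone trichotomy applied to the partition $(B,A)$ of $G^I$ to show that a swing vertex of $G^I$ would force an isolated or dominant vertex of $G$ (since the cross edges are unchanged by the inverse), which by Proposition \ref{decomposable} would contradict indecomposability; and finally the observation that a partition-preserving bijection preserves adjacency in $G$ iff it preserves the bipartite edges between $A$ and $B$, iff it preserves adjacency in $G^I$. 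This is a genuinely self-contained argument where the paper relies on an external reference, and it correctly handles the single-vertex case separately (needed because Propositions \ref{indecomsplit} and \ref{decomposable} require at least two vertices). What the paper's route buys is brevity; what yours buys is a proof readable without consulting \cite{Ch25}, and as a byproduct it establishes the useful fact that the inverse of an indecomposable split graph with at least two vertices is again balanced with the unique $KS$-partition $(B,A)$.
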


\begin{proof}
For any graph $G$, $Aut(G) = Aut(\overline{G})$.  In Proposition 2.19 of \cite{Ch25}, we also showed that when $G$ is an indecomposable split graph $Aut(G) = Aut(G^I)$.  Thus,  by Proposition \ref{propequalaut},  $Fix(G) = Fix(\overline{G})$ for any graph $G$ and $Fix(G) = Fix(G^I)$ for any indecomposable split graph $G$. 
\end{proof}

Here now is our theorem for the fixing numbers of unigraphs.

\begin{theorem}
When $G$ is a unigraph, $Fix(G)$ can be computed in linear time.  
\end{theorem}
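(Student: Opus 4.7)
The plan is to apply Theorem \ref{fixthm}, which reduces the problem to computing the fixing number of each component in the compact canonical decomposition of $G$ and then summing the results. The algorithm has three phases: compute the compact canonical decomposition, look up each component's fixing number, and add. I would show that each phase runs in linear time.

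For the first phase, I would invoke \texttt{Decompose($G$)} from \cite{Ch25} to obtain the paired degree sequences of the canonical components $(G_k,A_k,B_k),\ldots,(G_1,A_1,B_1),G_0$ in linear time; a single pass over this list then maximally merges consecutive single-vertex components of the same type, yielding the compact canonical decomposition $G = (G'_\ell,A'_\ell,B'_\ell)\circ\cdots\circ(G'_1,A'_1,B'_1)\circ G'_0$. By Theorem \ref{canon2}, each $G'_i$ is of one of three forms: (a) a complete graph on $m\ge 1$ vertices (arising from merged type-$K_1$ components), (b) a graph of $m\ge 1$ isolated vertices (arising from merged type-$S_1$ components), or (c) an indecomposable unigraph with at least two vertices.

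For the second phase, I would treat each of the three forms in turn. In case (a), Proposition \ref{propbasic} gives $Fix(K_m)=m-1$. In case (b), since a single vertex is rigid, Proposition \ref{propbasic}(ii) gives $Fix(m K_1) = m-1$. In case (c), I would run \texttt{IsUnigraph($G'_i$)} to identify the type (and parameters) of $G'_i$ in time linear in $|V(G'_i)|$, and then Proposition \ref{equaldistnums} reduces the lookup to one of the ``base'' types in Tables \ref{UnsplittableTable2} and \ref{SplittableTable2}, since $Fix(G'_i)=Fix(\overline{G'_i})=Fix({G'_i}^I)=Fix(\overline{{G'_i}^I})$ for indecomposable split graphs and $Fix(G'_i)=Fix(\overline{G'_i})$ in the non-split case. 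The fixing number is then read off in constant time from the closed-form entries in the tables given the parameters. The third phase is a simple summation over the $\ell+1\le|V(G)|$ components, which is linear.

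Since each subroutine runs in linear time in the size of its input and the sizes of the components partition $|V(G)|$, the total running time is $O(|V(G)|+|E(G)|)$. I do not expect any real obstacle here, as the conceptual work has already been done in Theorem \ref{fixthm} and in populating the tables; the only subtlety to flag in the write-up is \emph{why} we must use the compact canonical decomposition rather than Tyshkevich's canonical decomposition. The latter does not satisfy an additive formula for $Fix$, because consecutive single-vertex components of the same type admit cross-component automorphisms that permute those singletons, and these automorphisms are invisible to the sum $\sum Fix(G_i)=0$. Merging such components into a single $K_m$ or $mK_1$ piece exposes exactly the $m-1$ vertices that must be individualized to kill those permutations, which is precisely what Theorem \ref{fixthm} tallies.
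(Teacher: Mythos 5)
Your proposal is correct and follows essentially the same route as the paper: apply Theorem \ref{fixthm} to the compact canonical decomposition obtained via \texttt{Decompose} and \texttt{IsUnigraph}, use Proposition \ref{equaldistnums} to reduce complements and inverses to the base types, read $Fix(G'_i)$ from Tables \ref{UnsplittableTable2} and \ref{SplittableTable2} (with $Fix(G'_i)=|V(G'_i)|-1$ for the merged complete-graph and isolated-vertex components), and sum. Your closing remark on why the compact rather than the plain canonical decomposition is needed is accurate and matches the motivation the paper gives before Theorem \ref{canon2}.
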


\begin{proof}
Theorem \ref{fixthm} provides a method for computing $Fix(G)$.  First, find the {\it compact} canonical decomposition of $G = (G'_\ell, A_\ell, B_\ell)  \circ \cdots \circ (G'_1, A'_1, B'_1) \circ G'_0$.  Next, for each $G'_i$, determine $Fix(G'_i)$.  Finally, return the sum $Fix(G'_\ell) + \hdots + Fix(G'_1) + Fix(G'_0)$.   

 Both {\tt Decompose($G$)} and {\tt IsUnigraph($G$)} can easily be modified to return the (paired) degree sequence and type of each component $G'_i$ in the compact canonical decomposition of $G$ in linear time.  When $G'_0$ is an indecomposable non-split graph, Table \ref{UnsplittableTable2} provides the answer to $Fix(G_0)$ since $G_0$ is isomorphic to one of the graphs in the table or its complement which,  according to Proposition \ref{equaldistnums}, have the same fixing number.  For the same reason, when  $G'_i$, $i \ge 0$ is an indecomposable split graph with two or more vertices, $Fix(G'_i)$ can be found in Table \ref{SplittableTable2}.   Finally, when $G'_i$ is a complete graph or a graph of isolated vertices, $Fix(G'_i) = |V(G'_i)| - 1$.   Clearly, computing $Fix(G'_i)$ for $i = 0, \hdots, \ell$ can be done in time linear in the size of $G$.  The theorem follows.
\end{proof}

\section{A Toolkit}
\label{sec6}

        To make our discussion more accessible, we have created a toolkit \footnote{\url{https://chelseal11.github.io/tyshkevich_decomposition_toolkit/}} that implements  the algorithms described in this write-up.  The tools can do the following:
        
\begin{itemize}
\item Given a  sequence of graphs, it outputs the composition of the graphs.  The input and output graphs are described by their (paired) degree sequences.

\item Given a sequence of indecomposable unigraphs, it outputs the composition of the unigraphs.  The user selects the type of each indecomposable unigraph from a drop-down menu and specifies the related parameters.

\item Given positive integers $n$ and $k$,  it generates unigraphs with $n$ vertices that have $k$ indecomposable components. 

\item Given the degree sequence of a graph, it outputs the canonical decomposition of the graph.  Each indecomposable component is described by its (paired) degree sequence.  It determines if the input graph is a unigraph; if so, it also specifies the type of each indecomposable component.  It can also determine the  clique number, chromatic number, independence number and vertex cover number of the original graph.

\item Given the degree sequence of a graph, it outputs the {\it compact} canonical decomposition of the graph.  Again, each component is described by its degree sequence.  If the input graph is a unigraph, it also outputs the type of each component.  Additionally, it can determine the distinguishing and fixing numbers of the original graph.

%its distinguishing and fixing numbers.  Additionally, it specifies the type as well as the distinguishing and fixing numbers of each component. 
\end{itemize}

\section{Conclusion}
\label{sec7}

	%This write-up describes Tyshkevich's canonical decomposition theorem for graphs and her classification of indecomposable unigraphs.   It then elaborates on the algorithmic implications of her work,  showing that  various graph parameters can be computed in linear time when the input graph is a unigraph.   
	
	We have described Tyshkevich's main results and elaborated on  the algorithmic implications of her work,  showing that  various graph parameters can be computed in linear time when the input graph is a unigraph.   It is very likely that  more polynomial-time algorithms can be designed for unigraphs.  The results here also beg the question whether a similar approach can work for other graph classes.  That is, can their indecomposable components be completely classified and then used for algorithmic purposes?  We hope that our exposition, along with the toolkit we have developed, will encourage readers to engage with this topic and inspire further research.

\bibliography{references}
\bibliographystyle{abbrv}
	
\newpage	
\section*{Appendix}

%The algorithm DECOMPOSE($G$) that decomposes a graph $G$ into its indecomposable components. 

Below, we present the algorithm {\tt IsUnigraph}($G$).  Given graph $G$, it determines whether $G$ is a unigraph.  If $G$ is a unigraph, it additionally returns a list that contains the type of each indecomposable component of $G$.  
The algorithm relies on {\tt Decompose}($G$) and {\tt DetermineSplit}($\mathit{DegSeq}$), two algorithms that can be found in \cite{Ch25}.  %We include it here for completeness.   

{\tt Decompose}($G$) returns a stack that contains the (paired) degree sequences of the indecomposable components in the canonical decomposition of $G$.  At the top of the stack is the degree sequence of the first indecomposable component of $G$, which can be a non-split or split graph.  It is followed by the degree sequence of the second indecomposable component and so forth.  {\tt DetermineSplit}($\mathit{DegSeq}$) determines whether the graph with degree sequence $\mathit{DegSeq}$ is a split graph.  If the graph is split, the algorithm returns a paired degree sequence corresponding to a $KS$-partition.  If not, it returns the original $\mathit{DegSeq}$.  

%\tt DetermineNonSplitType}($\mathit{DegSeq}$)  and {\tt DetermineSplitType}($\mathit{DegSeq}$) assume that $\mathit{DegSeq}$ is the degree sequence of a non-split and split graph respectively.  They return 

{\tt DetermineNonSplitType}($\mathit{DegSeq}$) assumes that $\mathit{DegSeq}$ is the degree sequence of a  non-split graph.  If the graph is an  indecomposable non-split unigraph, the algorithm returns the graph's type; otherwise, it returns $null$.   {\tt DetermineSplitType}($\mathit{DegSeq}$) behaves the same way  for split graphs.  

%$null$ when  the graph is not an indecomposable non-split unigraph.  Otherwise, it returns the type of the indecomposable non-split unigraph and its parameters.    {\tt DetermineSplitType}($\mathit{DegSeq}$) behaves the same way  for split graphs.   

\bigskip
\begin{algorithm}[H]
\caption{{\tt IsUnigraph}($G$)}

\begin{algorithmic}[1]
\STATE $T \leftarrow \text{{\tt Decompose}}(G)$
%\STATE $isUnigraph \leftarrow \text{true}$
\STATE $componentList \leftarrow \text{[]}$
% \STATE $\mathit{DegSeq} \leftarrow T.\text{pop}$
 \STATE $\mathit{DegSeq} \leftarrow \text{{\tt DetermineSplit}}(T.{top})$
 \IF{$\mathit{DegSeq}$ is not a paired degree sequence}
      \STATE $ans \leftarrow \text{\tt DetermineNonSplitType}(\mathit{DegSeq})$
       \IF{$ans = null$}
            \STATE return (``Not a unigraph")
 	\ELSE
 	     \STATE append $ans$ to $\mathit{componentList}$
	     \STATE $T.{pop}$
	 \ENDIF
\ENDIF
\WHILE{$T.size \ge 1$}
    \STATE $\mathit{DegSeq} \leftarrow T.{pop}$
    \STATE $ans \leftarrow \text{\tt DetermineSplitType}(\mathit{DegSeq})$
    \IF{$ans = null$}
       \STATE return (``Not a unigraph")
     \ELSE   
         \STATE append $ans$ to $\mathit{componentList}$
      \ENDIF
 \ENDWHILE
 \STATE return (``A unigraph", $\mathit{componentList}$)
  
\end{algorithmic}

\end{algorithm}

 %In the pseudocode of {\tt DetermineNonSplitType},  ``C5", ``MK2", ``U2"  and ``U3" correspond to the graphs $C_5, mK_2$, $U_2(m, \ell)$ and $U_3(m)$ in Table \ref{UnsplittableTable}.  
 
 Let $H$ be some non-split  graph, and $\mathit{DegSeq}$ its degree sequence.  %In 
 %, the degree sequence of a non-split indecomposable graph $H$, 
 {\tt DetermineNonSplitType}($\mathit{DegSeq}$)  considers both $H$ and $\overline{H}$, and checks whether one of them is isomorphic to $C_5, mK_2$, $U_2(m, \ell)$ or $U_3(m)$,  the four indecomposable non-split unigraphs.  If a match is found,  the algorithm returns the information of the match -- the variant of $H$ (original or complement) and the indecomposable non-split unigraph it is matched to.
 %one of the four  indecomposable non-split unigraphs and its related parameters.  
  Otherwise, the algorithm returns $null$,  indicating $H$ is not an indecomposable non-split unigraph.   We remind the reader that if $\overline{H}$ is isomorphic to say $mK_2$, then $H$ is also isomorphic to $\overline{mK_2}$.  Thus, the information of the match returned by the algorithm is the type of $H$.

 %Table \ref{UnsplittableTable} lists the degree sequences of $C_5, mK_2$, $U_2(m, \ell)$ and $U_3(m)$.  
  Let $S$ be the degree sequence of $H$ or $\overline{H}$.  To check for a match,  the algorithm compares $S$ with the degree sequences of $C_5, mK_2$, $U_2(m, \ell)$ and $U_3(m)$.  We demonstrate this for $U_2(m, \ell)$.
 %This is easy to do for $C_5$ -- simply check if $S = (2^5)$.   For the other three graphs,  the algorithm checks if $S$ has the right format and parameters.   Consider $U_2(m, \ell)$.  
 From Table \ref{UnsplittableTable}, the degree sequence of $U_2(m, \ell)$ is $(\ell, 1^{2m+\ell})$ with $m \ge 1$, $\ell \ge 2$.  To determine if $S$ is the degree sequence of $U_2(m, \ell)$ for some $m$ and $\ell$, the algorithm first checks if the ``length" of $S$ is $2$; i.e., $S$ has two distinct degrees so $S = (d_1^{r_1}, d_2^{r_2})$ with $d_1 > d_2$. 
 Then it verifies that $r_1 = 1$, $d_2 = 1$ and $r_2 - d_1$ is even.  If so, it sets the parameter $m$ to $(r_2 - d_1)/2$ and $\ell$ to $d_1$.   Finally, it checks whether $m \ge 1$, $\ell \ge 2$.   If all of these conditions are satisfied, then $d_1 = \ell$, $r_1 = 1$, $d_2 = 1$,  $r_2 = 2m + \ell$ with $m \ge 1$, $\ell \ge 2$.  That is, $S = (\ell, 1^{2m+\ell})$,  the degree sequence of $U_2(m, \ell)$.  
 
 The process for $U_3(m)$ is slightly different.  From Table \ref{UnsplittableTable}, the degree sequence of $U_3(m)$ is $(2m+2, 2^{2m+3})$ with $m \ge 1$.  After doing the initial checks on the length of $S$, the parity of $d_1$ and the values of $r_1$ and $d_2$, the algorithm derives the parameter $m$ from $d_1$. Checking if $m \ge 1$  is not enough because $m$ is also related to $r_2$.  It also verifies that $r_2 = 2m+3$.  In this way, $(d_1^{r_1}, d_2^{r_2}) = (2m+2, 2^{2m+3})$ for some $m \ge 1$.

  %Lines 6 to 7 checks if $S = (2^5)$, the degree sequence of $C_5$. Lines 8 to 13 checks if $S = (1^{2m})$ and $m \ge 2$, the degree sequence of $mK_2$.  Lines 14 to 19 checks if $S = (\ell, 1^{2m+ \ell})$ with $\ell \ge 2$ and $m \ge 1$, the degree sequence of $U_2(m, \ell)$.  Finally, lines 20 to 25 checks if $S = (2m+2, 2^{2m+3})$ with $m \ge 1$, the degree sequence of $U_3(m)$.

%  If so, the graph is definitely isomorphic to $C_5$.  Lines 8 to 13 checks $S = (1^{2m})$ and $m \ge 2$.  If so, the graph is definitely isomorphic to $mK_2$.   Lines 14 to 19 checks if $S = (\ell, 1^{2m+ \ell})$.   

%/**** Chelsea,  please check the conditions below for $U_3(m)$.   We need to put a condition on $r_2$; otherwise, $(6^1, 2^{10})$ will seem like it's $U_3(2)$ but it isn't.  ****/

\begin{algorithm}[H]
\caption{{\tt DetermineNonSplitType}($\mathit{DegSeq}$)}
\begin{algorithmic}[1]
	\STATE $\mathit{variants} \leftarrow [\text{``original",  ``complement"}]$
     	\STATE $\mathit{tests} \leftarrow  [\text{``C5", ``MK2", ``U2", ``U3"}]$
	     %    \STATE $\mathit{uniVariant} \leftarrow \text{null}$\;
    	\FOR{$variant$ \text{in} $\mathit{variants}$}
      		%\IF{$\mathit{uniVariant} \neq \text{null}$}{\textbf{break}} \ENDIF
      		\STATE $S  \leftarrow {applyVariant}(variant, \mathit{DegSeq})$\;
		%\STATE $(D_A ; D_B) \leftarrow \mathit{S}$
      		\FOR{$test$ \text{ in } tests}
		   \IF{$test = \text{``C5"}$ and $S = (2^5)$}
		   	\STATE return ($variant$,  \text{``C5"})
		    \ENDIF
		     \IF{$test = \text{``MK2"}$ and $length(\mathit{S}) =1$ }
		         \STATE $(d_1^{r_1}) \leftarrow \mathit{S}$
		         \IF{$d_1 = 1$ and $r_1$ is even}
		           \STATE $\mathit{m} \leftarrow r_1 / 2$
		           \IF {$m \ge 2$}
		               \STATE return ($variant$,  \text{``MK2"}, $m$)
		            \ENDIF
		          \ENDIF
		       \ENDIF       
		     \IF{$test = \text{``U2"}$ and $length(\mathit{S}) =2$ }  
		        \STATE $(d_1^{r_1}, d_2^{r_2}) \leftarrow \mathit{S}$ with $d_1 > d_2$
		        \IF{$r_1 = 1$  and $d_2 = 1$ and $(r_2 - d_1)$ is even}
		           \STATE $m \leftarrow (r_2 - d_1) / 2$, $\ell \leftarrow d_1$
		           \IF{$m \ge 1$ and $\ell \ge 2$}
		              \STATE return ($variant$,  \text{``U2"}, $m, \ell$)
		            \ENDIF
		         \ENDIF
		       \ENDIF
		       %\STATE /* This portion has been modified. */
		       \IF{$test = \text{``U3"}$ and $length(\mathit{S}) =2$ }  
		         \STATE $(d_1^{r_1}, d_2^{r_2}) \leftarrow \mathit{S}$ with $d_1 > d_2$
		         \IF{$d_1$ is even and $r_1 = 1$  and $d_2 = 2$}
		            \STATE $\mathit{m} \leftarrow (d_1 - 2) / 2$
		            \IF{$m \ge 1$ and $r_2 = 2m+3$}
		                \STATE return ($variant$,  \text{``U3"}, $m$)
		             \ENDIF
		          \ENDIF
		        \ENDIF
		\ENDFOR
	\ENDFOR
        \STATE return($null$)
\end{algorithmic}
\end{algorithm}

\begin{algorithm}[H]
\caption{{\tt DetermineSplitType}($\mathit{DegSeq}$)}
\begin{algorithmic}[1]
	\STATE $\mathit{variants} \leftarrow [\text{``original", ``inverse", ``complement", ``inverseComp"}]$
     	\STATE $\mathit{tests} \leftarrow [\text{``SVG", ``SPQ", ``S2", ``S3", ``S4"}]$
     %    \STATE $\mathit{uniVariant} \leftarrow \text{null}$\;
    	\FOR{$variant$ \text{in} $\mathit{variants}$}
      		%\IF{$\mathit{uniVariant} \neq \text{null}$}{\textbf{break}} \ENDIF
      		\STATE $S = (D_A; D_B) \leftarrow {applyVariant}(variant, \mathit{DegSeq})$\;
		%\STATE $(D_A ; D_B) \leftarrow \mathit{S}$
      		\FOR{$test$ \text{ in } tests}
        		%\IF{$\mathit{uniVariant} \neq \text{null}$}{\textbf{break} \ENDIF
			\IF{$test = \text{``SVG"}$}
	 	                \IF{$length(D_A) = 1$ and $length(D_B) = 0$}
                  			\STATE $(d_1^{r_1}) \leftarrow D_A$\;
                  			\IF{$d_1 = 0$ and $r_1 = 1$}
                    				\STATE return($variant$,  \text{``SVG-K1"})
                    				%\STATE $\mathit{params}['partition'] \leftarrow "A"$\;
                  			\ENDIF
			           \ENDIF
		                  \IF{$length(D_A) = 0$ and $length(D_B) = 1$}
                  			\STATE $(d_2^{r_2}) \leftarrow D_B$\;
                 			 \IF{$d_2 = 0$ and $r_2 = 1$}
                    			     \STATE return($variant$,  \text{``SVG-S1"})
			     		     %$uniVariant \leftarrow \text{"SVG-S1"}$\;
                    			     %\STATE $\mathit{params}['partition'] \leftarrow "B"$\;
	             			\ENDIF
			           \ENDIF
			     \ENDIF   	
			  \IF{$test = \text{``SPQ"}$ and $length(D_A) = 1$ and $length(D_B) = 1$}
			     %\IF{$length(D_A) = 1$ and $length(D_B) = 1$}			     	
	                	     	\STATE $(d_1^{r_1}) \leftarrow D_A$, $(d_2^{r_2}) \leftarrow D_B$
                		      		\IF{$r_2 \bmod r_1 = 0$ and  $d_2  = 1$}      						          
		               			\STATE $p \leftarrow r_2 / r_1$, $q \leftarrow r_1$
		               				\IF{$p \ge  1$ and $q \ge 2$ and $d_1 = p+q-1$}
							     \STATE return($variant$, \text{``SPQ"}, $p, q$)
							 \ENDIF
				          \ENDIF
				% \ENDIF
		             \ENDIF    
		             
		           \IF{$test = \text{``S2"}$ and $length(D_A) \ge 2$ and $length(D_B) = 1$}
		                   \STATE  $(d_1^{r_1}, d_2^{r_2}, \dots, d_m^{r_m}) \leftarrow D_A$ with $d_1 > d_2 > \dots > d_m$ and $(d_B^{r_B}) \leftarrow D_B$
		                    \STATE $n \leftarrow r_1 + r_2 + ... + r_m$
		                       \FOR{$i \leftarrow 1$ to $m$}
                  			 \STATE $p_i \leftarrow d_i - n + 1$, $q_i \leftarrow r_i$
		                       \ENDFOR
		                   \IF{$p_m \ge 1$}
		                       \IF {$d_B = 1$ and $r_B = \sum_{i=1}^m p_i q_i$}
		                       %\STATE $n \leftarrow (r_1 + r_2 + ... + r_k)$
		                       %\FOR{$i \leftarrow 1$ to $k$}
                  			 %\STATE $p_i \leftarrow d_i - n + 1$, $q_i \leftarrow r_i$
			                   %\IF{$p_i < 1$}
			                      %\STATE {\bf break}
			                    %\ENDIF
			 		 %\STATE $q_i \leftarrow r_i$
		                        %\ENDFOR
		                          \STATE return($variant$, \text{``S2"}, ($p_1, q_1, p_2, q_2, \dots, p_m, q_m$))
		                        \ENDIF
		            	    \ENDIF
			          %\ENDIF
			      \ENDIF	
			
                           \IF{$test = \text{``S3"}$ and $length(D_A) = 1$ and $length(D_B) = 2$}                			
                			\STATE $(d_1^{r_1}) \leftarrow D_A$, $(d_2^{r_2}, d_3^{r_3}) \leftarrow D_B$ with $d_2 > d_3$
			        \IF{$r_2 = 1$ and $d_3 = 1$}
			         	\STATE $p \leftarrow d_1 - r_1$, $q_1 \leftarrow d_2$, $q_2 \leftarrow r_1 - d_2$
			                 \IF{ $p \ge 1$ and $q_1 \ge 2 $ and $q_2 \ge  1$}
			        		           \IF{$r_3 = pq_1 + (p+1)q_2$}
				     %\STATE $p \leftarrow d_1 - r_1$, $q_1 \leftarrow d_2$, $q_2 \leftarrow r_1 - d_2$
				     %\STATE /** Note: Change in next line **/
				     %\IF{ $p \ge 1$ and $q_1 \ge 2 $ and $q_2 \ge  1$ and $r_3 = pq_1 + (p+1)q_2$}
				                 \STATE return($variant$, \text{``S3"}, $p, q_1, q_2$)
				                 \ENDIF
				            \ENDIF
				        \ENDIF
		             \ENDIF	
		             
		           \IF{$test = \text{``S4"}$ and  $length(D_A) = 2$ and $length(D_B) = 1$}
		               \STATE $(d_1^{r_1}, d_2^{r_2}) \leftarrow D_A$ with $d_1 > d_2$, $(d_3^{r_3}) \leftarrow D_B$
		               %\STATE $q \leftarrow r_2 - 2$, $p \leftarrow d_2 - 3 - q$
		               %\IF{ $p \ge 1$ and $q \ge 1$}
		                    %\IF{$d_1 = 2(p + q + 1) + qp$  and $r_1 = 1$ and $d_3 = 2$ and $r_3 = qp + 2p + q + 1$}
		               \IF{$r_1 = 1$ and $d_3 = 2$}
		                   \STATE $q \leftarrow r_2 - 2$, $p \leftarrow d_2 - 3 - q$
		                   \IF{$p \ge 1$ and $q \ge 1$}
		                   	\IF{ $d_1 = 2(p + q + 1) + qp$ and $r_3 = qp + 2p + q + 1$}
		           			\STATE return($variant$, \text{``S4"}, $p, q$)
					\ENDIF
				  \ENDIF
			       \ENDIF
		            \ENDIF         
		\ENDFOR
	\ENDFOR
         \STATE return($null$)
\end{algorithmic}
\end{algorithm}

This time, let $H$ be some split  graph and $\mathit{DegSeq}$ be its degree sequence.  The algorithm {\tt DetermineSplitType}($\mathit{DegSeq}$) works the same way as {\tt DetermineNonSplitType} by comparing the degree sequence of $H$, $\overline{H}$, $H^I$, $\overline{H}^I$ with those of a single vertex graph, $S(p,q)$, $S_2(p_1, q_1, \hdots, p_k, q_m)$, $S_3(p, q_1, q_2)$ and $S_4(p,q)$.   The algorithm first checks if the degree sequence has the correct basic format.  It then derives the parameters and checks if the parameters are valid.  Finally, if certain parts of the degree sequence are based on the parameters but were not used in the derivation of the parameters,  the algorithm checks if they have the right values.  
 
 Let $S$ be the degree sequence of $H$, $\overline{H}$, $H^I$ or $\overline{H}^I$.  Let us demonstrate the above steps when the algorithm checks if $S$ is isomorphic to   $S_2(p_1, q_1, \hdots, p_m, q_m)$ for some $p_i, q_i$, $i = 1, \hdots, m$.  From Table \ref{SplittableTable}, the degree sequence of the latter is $((p_1+ n -1)^{q_1}, \cdots, $ $(p_m + n-1)^{q_m};$ $1^{p_1q_1 + \cdots + p_m q_m})$ where $n = \sum_{i=1}^m q_i$ with $m \ge 2$, $p_1 > p_2 > \hdots > p_m \ge 1$ and $q_i \ge 1$ for $i = 1, \hdots, m$.  
 
 First, the algorithm checks if in the paired degree sequence $(D_A, D_B)$,  $D_A$ has length at least $2$ and $D_B$ has length $1$.   If so, its format is  $(d_1^{r_1}, d_2^{r_2}, \dots, d_m^{r_m}; d_B^{r_B})$ with $m \ge 2$ and $d_1 > d_2 > \hdots > d_m$.  It then sets the parameters $n = \sum_{i=1}^m r_i$,  $p_i = d_i - n+1$ and $q_i = r_i$ for $i = 1, \hdots, m$.   Next, it verifies that $p_m \ge 1$ and $d_B = 1$.  There is no need to check if $q_i \ge 1$ since $r_i \ge 1$. 
 Finally, it checks that $r_B = \sum_{i=1}^m p_i q_i$.    If all these conditions are satisfied, then $d_i = p_i + n -1$ and $r_i = q_i$ for $i = 1, \hdots, m$ with $n = \sum_{i=1}^m q_i$.   Since the $d_i$'s are in decreasing values, so are the  $p_i$'s with $p_m \ge 1$.   It is also the case that $d_B^{r_B} = (1^{p_1q_1 + \cdots + p_m q_m})$.

 In both {\tt DetermineNonSplitType} and {\tt DetermineSplitType}, the number of variants of $H$ and the number of graphs a variant of $H$ is compared to are constants.  Deriving the degree sequence of each variant of $H$ takes $O(|V(H)|)$ time. The time it takes to check if a variant of $H$ is isomorphic to some indecomposable unigraph is $O(|V(H)|)$.  Thus, both algorithms run in $O(|V(H)|)$ time.  This means that in {\tt IsUnigraph},  lines 1 to 18 take $O(|V(G)|)$ time.  Both {\tt Decompose} and {\tt DetermineSplit} also run in linear time.  Hence, {\tt IsUnigraph} runs in linear time.
 
 \end{document}